\newcommand{\bbr}{\mathbb{R}}
\begin{document}
\section{Introduction}

The classical examples of multivariate and matrix Gamma distributions in the
probability and statistics literature are not necessarily infinitely divisible
\cite{Gr84}, \cite{Le48}, \cite{VJ67}. These examples are analogous to
one-dimensional Gamma distributions and are obtained by a direct
generalisation of the one-dimensional probability densities; see for example
\cite{GuKa00}, \cite{Ma97}, \cite{Mu82}. Working in the domain of Fourier
transforms, some infinitely divisible matrix Gamma distributions have recently
been considered in \cite{BNPA08}, \cite{PAS08}. Their Lévy measures are direct
generalisations of the one-dimensional Gamma distribution. The work of
\cite{PAS08} arose in the context of random matrix models relating classical
and free infinitely divisible distributions.\smallskip

The study of infinitely divisible random elements in cones has been considered
in \cite{BPS01}, \cite{PS04}, \cite{PR07}, \cite{RS03} and references therein.
They are important in the construction and modeling of cone increasing Lévy
processes. In the particular case of infinitely divisible positive-definite
random matrices, their importance in applications has been recently
highlighted in \cite{BNSe07}, \cite{BNSe09}, \cite{PiSe09a} and \cite{PiSe09}.
This is due to the fact that infinite divisibility allows modelling by matrix
Lévy and Ornstein-Uhlenbeck processes, which are in those papers used to model
the time dynamics of a $d\times d$ covariance matrix to obtain a so-called
stochastic volatility model (for observed series of financial data).\smallskip

Generalized Gamma Convolutions (GGC) is a rich and interesting class of
one-di\-men\-sion\-al infinitely divisible distributions on the cone $\mathbb{R}%
_{+}=[0,\infty).$ It is the smallest class of infinitely divisible
distributions on $\mathbb{R}_{+}$ that contains all Gamma distributions and
that is closed under classical convolution and weak convergence. This class
was introduced by O. Thorin in \ a series of papers and further studied by L.
Bondesson in his book \cite{Bo92}. The book of Steutel and Van Harn
\cite{SteutelVanHarn2004} contains also many results and examples about GGC.
Several well known and important distributions on $\mathbb{R}_{+}$ are GGC.
The recent survey paper by James, Roynette and Yor \cite{JRY08} contains a
number of classical results and old and new examples of GGC. The multivariate
case was considered in Barndorff-Nielsen, Maejima and Sato \cite{BMS06}%
.\smallskip

There are three main purposes in this paper. We formulate and study
multivariate and cone valued Gamma distributions which are infinitely
divisible. Second, we consider and characterise the corresponding class
$GGC(K)$ of Generalised Gamma Convolutions on a finite dimensional cone $K.$
Finally, we introduce a new example of a positive definite random matrix with
infinitely divisible Gamma distribution and with explicit Lévy measure. 

The main results and organisation of the paper are as follows. Section 2
briefly presents preliminaries on notation and results about one-dimensional
GGC on $\mathbb{R}_{+}$ as well as some matrix notation. Section
\ref{sec:MGammadist} introduces a class of infinitely divisible $d$-variate
Gamma distributions $\Gamma_{d}(\alpha,\beta)$, whose Lévy measures are
analogous to the Lévy measure of the one-dimensional Gamma distribution. The
parameters $\alpha$ and $\beta$ are measures and functions on $\mathbf{S}$
(the unit sphere with respect to a prescribed norm), respectively. It is shown
that the distribution does not depend on the particular norm under
consideration. The characteristic function is derived and it is shown that the
Fourier-Laplace transform on $\mathbb{C}^{d}$ exists if $\beta$ is bounded
away from zero $\alpha-$ almost everywhere. Furthermore, the finiteness of
moments of all orders is studied and some interesting examples exhibiting
essential differences to univariate Gamma distributions are given.\smallskip

Section \ref{sec:ggc} considers cone valued Gamma distributions and their
corresponding class $GGC(K)$ of Generalised Gamma Convolutions on a cone $K,$
defined as the smallest class of distributions on $K$ which is closed under
convolution and weak convergence and contains all the so-called elementary
Gamma variables in $K$ (and also all Gamma random variables in $K$ in our new
definition). This class is characterised as the stochastic integral of a
non-random function with respect to the Poisson random measure of the jumps of
a Gamma Lévy process on the cone. This is a new representation in the
multivariate case extending the Wiener-Gamma integral characterization of
one-dimensional GGC on $\mathbb{R}_{+}=[0,\infty)$, as considered, for
example, in \cite{JRY08}.

Section \ref{sec:conegammadist} considers the special cone valued case of
infinitely divisible positive-semidefinite $d\times d$ matrix Gamma
distributions. New examples are introduced via an explicit form of their Lévy
measure. They include as particular cases the examples considered in
\cite{BNPA08}, \cite{PAS08}. A detailed study is done of the new two parameter
positive definite matrix distribution $A\Gamma(\eta,\Sigma)$, where
$\eta>(d-1)/2$ and $\Sigma$ is a $d\times d$ positive definite matrix. This
special infinitely divisible Gamma matrix distribution has several modeling
features similar to the classical (but non-infinitely divisible) matrix Gamma
distribution defined through a density, in particular the Wishart
distribution. Namely, moments of all orders exist, the matrix mean is
proportional to $\Sigma$ and the matrix of covariances equals the second
moment of the Wishart distribution. When $\Sigma$ is the $d\times d$
\ identity matrix $\mathrm{I}_{d}$, the distribution is invariant under
orthogonal conjugations and the trace of a random matrix $M$ with distribution
$A\Gamma(\eta,\mathrm{I}_{d})$ has a one-dimensional Gamma distribution. A
relation of the moments of the Marchenko-Pastur distribution with the
asymptotic moments of the Lévy measure is exhibited. Hence, this matrix Gamma
distribution has a special role when dealing with a random covariance matrix
and its time dynamics, e.g. by specifying it as a matrix Lévy or
Ornstein-Uhlenbeck process. As an application, the matrix Normal-Gamma
distribution is introduced, which is a matrix extension of the one-dimensional
variance Gamma distribution of \cite{MadanCarrChang1998} which is popular in finance.

\section{Preliminaries}

\label{sec:notation} For the general background in infinitely divisible
distributions and L\'evy processes we refer to the standard references, e.g.
\cite{sato1999}.

\subsection{One-dimensional GGC}

A positive random variable $Y$ with law $\mu=\mathcal{L}(Y)$ belongs to the
class of Generalised Gamma Convolutions (GGC) on $\mathbb{R}_{+}=[0,\infty)$,
denoted by $T(\mathbb{R}_{+}),$ if and only if there exists a positive Radon
measure $\upsilon_{\mu}$ on $(0,\infty)$ and $a>0$ such that its Laplace
transform is given by:%
\begin{equation}
\emph{L}_{\mu}(z)=\mathbb{E}e^{-zY}=\exp\left(  -az-\int_{0}^{\infty}%
\ln\left(  1+\frac{z}{s}\right)  \ \upsilon_{\mu}(\mathrm{d}s)\right)
\label{cfGGC}%
\end{equation}
with
\begin{equation}
\int_{0}^{1}|\log x|\upsilon_{\mu}(\mathrm{d}x)<\infty,\text{ }\int
_{1}^{\infty}\frac{\upsilon_{\mu}(\mathrm{d}x)}{x}<\infty. \label{tmGGC}%
\end{equation}
For convenience we shall work without the translation term, \textit{i.e.
}with\textit{ }$a=0.$ The measure $\upsilon_{\mu}$ is called the Thorin
measure of $\mu$. Its Lévy measure is concentrated on $(0,\infty)$ and is such
that:
\begin{equation}
\nu_{\mu}(\mathrm{d}x)=x^{-1}\mathit{l}_{\mu}(x)\mathrm{d}x, \label{LevMeaGGC}%
\end{equation}
where $\mathit{l}_{\mu}$ is a completely monotone function in $x>0$ given by
\begin{equation}
\mathit{l}_{\mu}(\mathrm{d}x)=\int_{0}^{\infty}e^{-xs}\upsilon_{\mu
}(\mathrm{d}s). \label{lmth}%
\end{equation}

The class $T(\mathbb{R}_{+})$ can be characterized by Wiener-Gamma
representations. Specifically, a positive random variable $Y$ belongs to
$T(\mathbb{R}_{+})$ if and only if there is a Borel function $h:\mathbb{R}%
_{+}\rightarrow\mathbb{R}_{+}$ with
\begin{equation}
\int_{0}^{\infty}\ln(1+h(t))\mathrm{d}t<\infty, \label{intcondh}%
\end{equation}
such that $Y\overset{\mathcal{L}}{=}Y^{h}$ has the Wiener-Gamma integral
representation 
\begin{equation}
Y^{h}\overset{\mathcal{L}}{=}\int_{0}^{\infty}h(u)\mathrm{d}\gamma_{u},
\label{WinGamInt}%
\end{equation}
where $\left(  \gamma_{t};t\geq0\right)  $ is the standard Gamma process with
Lévy measure $\nu($\textrm{d}$x)=e^{-x}\frac{\mathrm{d}x}{x}.$ The relation
between the Thorin function $h$ and the Thorin measure $\upsilon_{\mu}$ is as
follows: $\ \upsilon_{\mu}$ is the image of the Lebesgue measure on $(0,\infty)$
under the application : $s\rightarrow1/h(s)$. That is,
\begin{equation}
\int_{0}^{\infty}e^{-\frac{x}{h(s)}}\mathrm{d}s=\int_{0}^{\infty}%
e^{-xz}\upsilon_{\mu}(\mathrm{d}z),\quad x>0. \label{RelU&h}%
\end{equation}
On the other hand, if $F_{\upsilon_{\mu}}(x)=\int_{0}^{x}\upsilon_{\mu}%
($\textrm{d}$y)$ for $x\geq0$ and $F_{\upsilon_{\mu}}^{-1}(s)$ is the the
right continuous generalised inverse of $F_{\upsilon_{\mu}}(s)$, that is
$F_{\upsilon_{\mu}}^{-1}(s)=\inf\{t>0;F_{\upsilon_{\mu}}(t)\geq s\}$ for
$s\geq0,$ then, $h(s)=1/F_{\upsilon_{\mu}}^{-1}(s)$ for $s\geq0$.

Many well known distributions belong to $T(\mathbb{R}_{+})$. The positive
$\alpha$-stable distributions, $0<\alpha<1$, are GGC with $h(s)=\{s\theta
\Gamma(\alpha+1)\}^{-\frac{1}{\alpha}}$ for a $\theta>0.$ In particular, for
the $1/2-$stable distribution, $h(s)=4\left(  s^{2}\pi\right)  ^{-1}.$ Beta
distribution of the second kind, lognormal and Pareto are also GGC, see
\cite{JRY08}.

For more details on univariate GGCs we refer to \cite{JRY08,Bo92}

\subsection{Notation}

$\mathbb{M}_{d}(\mathbb{R})$ is the linear space of $d\times d$ matrices with
real entries and $\mathbb{S}_{d}$ its subspace of symmetric matrices. By
$\mathbb{S}_{d}^{+}$ and $\overline{\mathbb{S}}_{d}^{+}$ we denote the open
(in $\mathbb{S}_{d}$) and closed cones of positive and nonnegative definite
matrices in $\mathbb{M}_{d}(\mathbb{R})$. $\mathbf{S}_{\mathbb{R}^{d}%
,\Vert\cdot\Vert}$ is the unit sphere on $\mathbb{R}^{d}$ with respect to the
norm $\Vert\cdot\Vert.$

The Fourier transform $\widehat{\mu}$ of a measure $\mu$ on $\mathbb{M=R}^{d}$
or $\mathbb{M}=\mathbb{M}_{d}(\mathbb{R})$ is given by
\[
\hat{\mu}(z)=\int_{\mathbb{M}}e^{i\left\langle z,x\right\rangle }%
\mu(\mathrm{d}x)\quad z\in\mathbb{M}%
\]
where we use $\left<  A,B\right>  =\mathrm{tr}(A^{\top}B)$ as the scalar
product in the matrix case, where $A^{\top}$ denotes the transposed on
$\mathbb{M}_{d}(\mathbb{R})$. By $\mathrm{I}_{d}$ we denote the $d\times d$
identity matrix and by $\left\vert A\right\vert $ the determinant of a square
matrix $A$. For a matrix $A$ in the linear group $\mathcal{GL}_{d}%
(\mathbb{R})$ we write $A^{-\top}=\left(  A^{\top}\right)  ^{-1}.$

We say that the distribution of a symmetric random $d\times d$ matrix $M$ is
invariant under orthogonal conjugations if the distribution of
$OMO^{\mathrm{\top}}$ equals the distribution of $M$ for any non-random matrix
$O$ in the orthogonal group $\mathcal{O}(\mathrm{d})$. Note that $M\rightarrow
OMO^{\mathrm{\top}}$ with $O\in$ $\mathcal{O}(\mathrm{d})$ are all linear
orthogonal maps on $\mathbb{S}_{d}^{+}$ (or $\mathbb{M}_{d}$) preserving
$\mathbb{S}_{d}^{+}$.

\section{Multivariate Gamma Distributions}

\label{sec:MGammadist}

\subsection{Definition}

\begin{definition}
\label{def:mgamma} Let $\mu$ be an infinitely divisible probability
distribution on $\mathbb{R}^{d}$. If there exists a finite measure $\alpha$ on
the unit sphere $\mathbf{S}_{\mathbb{R}^{d},\Vert\cdot\Vert}$ with respect to
the norm $\Vert\cdot\Vert$ equipped with the Borel $\sigma$-algebra and a
Borel-measurable function $\beta:\mathbf{S}_{\mathbb{R}^{d},\Vert\cdot\Vert
}\rightarrow\mathbb{R}_{+}$ such that
\begin{equation}
\hat{\mu}(z)=\exp\left(  \int_{\mathbf{S}_{\mathbb{R}^{d},\Vert\cdot\Vert}%
}\int_{\mathbb{R}_{+}}\left(  e^{irv^{\top}z}-1\right)  \frac{e^{-\beta(v)r}%
}{r}\mathrm{d}r\alpha(\mathrm{d}v)\right)  \label{FTGMult}%
\end{equation}
for all $z\in\mathbb{R}^{d}$, then $\mu$ is called a \emph{$d$-dimensional
Gamma distribution} with parameters $\alpha$ and $\beta$, abbreviated
$\Gamma_{d}(\alpha,\beta)$-distribution.

If $\beta$ is constant, we call $\mu$ a \emph{$\|\cdot\|$-homogeneous
$\Gamma_{d}(\alpha,\beta)$-distribution}.
\end{definition}

Observe that the notation $\Gamma_{d}(\alpha,\beta)$ implicitly also specifies
which norm we use, because $\alpha$ is a measure on the unit sphere with
respect to the norm employed and $\beta$ is a function on it. The parameters
$\alpha$ and $\beta$ play a comparable role as shape and scale parameters as
in the usual positive univariate case.

\begin{remark}
(i) Obviously the Lévy measure $\nu_{\mu}$ of $\mu$ is given by
\begin{equation}
\nu_{\mu}(E)=\int_{\mathbf{S}_{\mathbb{R}^{d},\Vert\cdot\Vert}}\int
_{\mathbb{R}_{+}}1_{E}(rv)\frac{e^{-\beta(v)r}}{r}\mathrm{d}r\alpha
(\mathrm{d}v) \label{eq:gamlevmeas}%
\end{equation}
for all $E\in\mathcal{B}(\mathbb{R}^{d})$. This expression is equivalent to
\begin{equation}
\nu_{\mu}(\mathrm{d}x)=\frac{e^{-\beta(x/\left\Vert x\right\Vert )\left\Vert
x\right\Vert }}{\left\Vert x\right\Vert }\widetilde{\alpha}(\mathrm{d}x),\quad
x\in\mathbb{R}^{d} \label{MEquivLevMeasMeas}%
\end{equation}
where $\widetilde{\alpha}$ is a measure on $\mathbb{R}^{d}$ given by
\begin{equation}
\widetilde{\alpha}(E)=\int_{\mathbf{S}_{\mathbb{R}^{d},\left\Vert
\cdot\right\Vert }}\int_{0}^{\infty}1_{E}(rv)\mathrm{d}r\alpha(\mathrm{d}%
v),\quad E\in\mathcal{B}(\mathbb{R}^{d}). \label{MLVGK2}%
\end{equation}

(ii) Likewise we define $\mathbb{M}_{d}(\mathbb{R})$ and $\mathbb{S}_{d}%
$-valued Gamma distributions with parameters $\alpha$ and $\beta$ (abbreviated
$\Gamma_{\mathbb{M}_{d}}(\alpha,\beta)$ and $\Gamma_{\mathbb{S}_{d}}%
(\alpha,\beta)$, respectively) by replacing $\mathbb{R}^{d}$ with
$\mathbb{M}_{d}(\mathbb{R})$ and $\mathbb{S}_{d}$, respectively, and the
Euclidean scalar product with $\left\langle Z,X\right\rangle =$%
\textrm{$\mathrm{tr}$}$(X^{\top}Z)$. All upcoming results immediately
generalise to this matrix-variate setting. We provide further details in
Section \ref{sec:matrix}.
\end{remark}

If $d=1$ and $\alpha(\{-1\})=0$, then we have the usual one-dimensional
$\Gamma(\alpha(\{1\}),\beta(1))$-distribution. In general it is elementary to
see that for $d=1$ a random variable $X\sim\Gamma_{1}(\alpha,\beta)$ if and
only if $X\overset{\mathcal{D}}{=}X_{1}-X_{2}$ with $X_{1}\sim\Gamma
(\alpha(\{1\}),\beta(1))$ and $X_{2}\sim\Gamma(\alpha(\{-1\},\beta(-1))$ being
two independent usual Gamma random variables, i.e. $X$ has a bilateral Gamma
distribution as analysed in \cite{KuechlerTappe2007,KuechlerTappe2008} and
introduced in \cite{CGMY2002,MadanCarrChang1998} under the name variance Gamma
distribution. If $\alpha(\{1\})=\alpha(\{-1\})$ and $\beta(1)=\beta(-1)$, it
indeed can be represented as the variance mixture of a normal random variable
with an independent positive Gamma one (a comprehensive summary of this case
can be found in \cite{SteutelVanHarn2004} where it is called sym-Gamma distribution).

Now we address the question of which $\alpha,\beta$ we can take to obtain a
Gamma distribution.

\begin{proposition}
\label{ExitMulGam} Let $\alpha$ be a finite measure on ${\mathbf{S}%
}_{\mathbb{R}^{d},\Vert\cdot\Vert}$ and $\beta:{\mathbf{S}}_{\mathbb{R}%
^{d},\Vert\cdot\Vert}\rightarrow\mathbb{R}_{+}$ a measurable function. Then
\eqref{eq:gamlevmeas} defines a Lévy measure $\nu_{\mu}$ and thus there exists
a $\Gamma_{d}(\alpha,\beta)$ probability distribution $\mu$ if and only if
\begin{equation}
\int_{{\mathbf{S}}_{\mathbb{R}^{d},\Vert\cdot\Vert}}\ln\left(  1+\frac
{1}{\beta(v)}\right)  \alpha(\mathrm{d}v)<\infty. \label{eq:condbeta}%
\end{equation}
Moreover, $\int_{\mathbb{R}^{d}}(\Vert x\Vert\wedge1)\nu_{\mu}(\mathrm{d}%
x)<\infty$ holds true.
\end{proposition}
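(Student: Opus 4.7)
The plan is to reduce the question to a one-dimensional estimate via polar decomposition. Applying Tonelli's theorem to \eqref{eq:gamlevmeas}, for any nonnegative Borel function $f$ on $\mathbb{R}_+$ one has
\[
\int_{\mathbb{R}^d}f(\Vert x\Vert)\,\nu_\mu(\mathrm{d}x)=\int_{\mathbf{S}_{\mathbb{R}^d,\Vert\cdot\Vert}}\int_0^\infty f(r)\,\frac{e^{-\beta(v)r}}{r}\,\mathrm{d}r\,\alpha(\mathrm{d}v).
\]
Taking $f(r)=r\wedge 1$ reduces the truncated first-moment bound to controlling $I(a):=\int_0^\infty (r\wedge 1)\,\frac{e^{-ar}}{r}\,\mathrm{d}r$, while the L\'evy-measure condition involves $J(a):=\int_0^\infty (r^2\wedge 1)\,\frac{e^{-ar}}{r}\,\mathrm{d}r$, each evaluated at $a=\beta(v)$ and then integrated against $\alpha$.

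The key step is to identify $I(a)$ and $J(a)$ up to universal constants with $\ln(1+1/a)$. For $I(a)$, I would combine the Frullani-type identity
\[
\ln\!\left(1+\frac{1}{a}\right)=\int_0^\infty(1-e^{-r})\,\frac{e^{-ar}}{r}\,\mathrm{d}r,\quad a>0,
\]
(verified by differentiating in $a$ and matching the limit at $a=\infty$) with the elementary two-sided bound $(1-e^{-1})(r\wedge 1)\le 1-e^{-r}\le r\wedge 1$, obtaining universal constants $c,C>0$ with $c\ln(1+1/a)\le I(a)\le C\ln(1+1/a)$. Integrating against $\alpha$ then shows that \eqref{eq:condbeta} is equivalent to $\int(\Vert x\Vert\wedge 1)\,\nu_\mu(\mathrm{d}x)<\infty$. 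This simultaneously yields the \emph{moreover} statement and, since $\Vert x\Vert^2\wedge 1\le \Vert x\Vert\wedge 1$, the \emph{if} direction of the equivalence.

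For the converse, I would bound $J(a)\ge \int_1^\infty e^{-ar}/r\,\mathrm{d}r=\int_a^\infty e^{-s}/s\,\mathrm{d}s\ge e^{-1}\ln(1/a)$ for $a\in(0,1]$, which controls $\ln(1+1/a)$ up to a bounded additive error; on $\{\beta(v)\ge 1\}$ the integrand $\ln(1+1/\beta(v))\le \ln 2$ is absorbed by the finiteness of $\alpha$. Combining these deliveries yields \eqref{eq:condbeta} whenever $\nu_\mu$ is a L\'evy measure. The main obstacle is a clean uniform treatment of the two asymptotic regimes---the logarithmic singularity as $a\downarrow 0$, where both the upper and lower bounds must capture the $-\ln a$ blow-up, and the regime $a\to\infty$, where $J$ and $I$ decay faster than $\ln(1+1/a)$ but the finiteness of $\alpha$ makes this discrepancy harmless. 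The degenerate case $\alpha(\{v:\beta(v)=0\})>0$ fits the equivalence: the integrand $e^{-\beta(v)r}/r$ then produces a tail $\int_1^\infty \mathrm{d}r/r=\infty$ in $\nu_\mu$ at the same time as $\ln(1+1/\beta(v))=+\infty$, so both conditions simultaneously fail.
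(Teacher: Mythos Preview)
Your argument is correct. Both you and the paper reduce the question via polar decomposition to a one-dimensional comparison between $\int_0^\infty g(r)\,e^{-ar}/r\,\mathrm{d}r$ (for $g(r)=r\wedge 1$ or $r^2\wedge 1$) and $\ln(1+1/a)$, but the mechanics differ. The paper first shows $\int_{\Vert x\Vert\le 1}\Vert x\Vert\,\nu_\mu(\mathrm{d}x)\le\alpha(\mathbf{S})<\infty$ unconditionally, so that everything hinges on the tail $\int_{\Vert x\Vert>1}\nu_\mu(\mathrm{d}x)=\int_{\mathbf{S}}E_1(\beta(v))\,\alpha(\mathrm{d}v)$; it then quotes the bound $0\le E_1(z)\le e^{-z}\ln(1+1/z)$ and the series expansion of $E_1$ near $0$ from Abramowitz--Stegun to match $E_1(z)$ with $\ln(1+1/z)$ in the two regimes. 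Your route is more self-contained: the Frullani identity $\ln(1+1/a)=\int_0^\infty(1-e^{-r})e^{-ar}/r\,\mathrm{d}r$ together with the elementary sandwich $(1-e^{-1})(r\wedge 1)\le 1-e^{-r}\le r\wedge 1$ gives the two-sided comparison $I(a)\asymp\ln(1+1/a)$ in one stroke, with no special-function references. Your converse step (the lower bound $E_1(a)\ge e^{-1}\ln(1/a)$ for $a\le 1$) is essentially the same tail estimate the paper uses, just derived directly. The net effect is the same, but your packaging is tighter and avoids the external reference.
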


The condition \eqref{eq:condbeta} is trivially satisfied, if $\beta$ is
bounded away from zero $\alpha$-almost everywhere.

\begin{proof}%
\begin{align*}
\int_{\Vert x\Vert\leq1}\Vert x\Vert\nu_{\mu}(\mathrm{d}x)  &  =\int
_{\mathbf{S}_{\mathbb{R}^{d},\Vert\cdot\Vert}}\int_{0}^{1}e^{-\beta
(v)r}\mathrm{d}r\alpha(\mathrm{d}v)=\int_{\mathbf{S}_{\mathbb{R}^{d}%
,\Vert\cdot\Vert}}\frac{1-e^{-\beta(v)}}{\beta(v)}\alpha(\mathrm{d}v)\\
&  \leq\alpha(\mathbf{S}_{\mathbb{R}^{d},\Vert\cdot\Vert})<\infty
\end{align*}
using the elementary inequality $1-e^{-x}\leq x,\,$for each $x\in
\mathbb{R}_{+}$. Denoting by $E_{1}$ the exponential integral function given
by $E_{1}(z)=\int_{z}^{\infty}\frac{e^{-t}}{t}dt$ for $z\in\mathbb{R}_{+}$, we
get
\begin{align}
\int_{\Vert x\Vert>1}\nu_{\mu}(\mathrm{d}x)  &  =\int_{\mathbf{S}%
_{\mathbb{R}^{d},\Vert\cdot\Vert}}\int_{1}^{\infty}\frac{e^{-\beta(v)r}}%
{r}\mathrm{d}r\alpha(\mathrm{d}v)=\int_{\mathbf{S}_{\mathbb{R}^{d},\Vert
\cdot\Vert}}E_{1}(\beta(v))\alpha(\mathrm{d}v)\\
&  =\int_{0}^{\infty}E_{1}(z)\tau(\mathrm{d}z),
\end{align}
where we made the substitution $z=\beta(v)$ and $\tau(E)=\alpha(\beta
^{-1}(E))$ for all Borel sets $E$ in $\mathbb{R}_{+}$. Since $\tau$ is a
finite measure and $0\leq E_{1}(z)\leq e^{-z}\ln(1+1/z)\,\forall
\,z\in\mathbb{R}_{+}$ (see \cite[p. 229]{AbramowitzStegun}),
\[
\int_{1/2}^{\infty}E_{1}(z)\tau(\mathrm{d}z)<\infty.
\]
The series representation $E_{1}(z)=-\gamma-\ln(z)-\sum_{i=1}^{\infty}%
\frac{(-1)^{i}z^{i}}{n\cdot n!}$ with $\gamma$ being the Euler-Mascheroni
constant (\cite[p. 229]{AbramowitzStegun}) implies that $\lim_{z\downarrow
0}E_{1}(z)/(-\ln(z))=1$. Hence,
\[
\int_{0}^{1/2}E_{1}(z)\tau(\mathrm{d}z)<\infty\Leftrightarrow\int_{0}%
^{1/2}|\ln(z)|\tau(\mathrm{d}z)<\infty\Leftrightarrow\int_{0}^{1/2}%
\ln(1+1/z)\tau(\mathrm{d}z)<\infty
\]
using $\ln(1+1/z)=\ln(1+z)-\ln(z)$ and the finiteness of $\tau$ in the second
equivalence. Appealing to the finiteness of $\tau$ once more, the above
conditions are equivalent to
\[
\int_{0}^{\infty}\ln(1+1/z)\tau(\mathrm{d}z)=\int_{\mathbf{S}_{\mathbb{R}%
^{d},\Vert\cdot\Vert}}\ln(1+1/\beta(v))\alpha(\mathrm{d}v)<\infty.
\]

\end{proof}

The next proposition shows that the definition of a Gamma distribution does
not depend on the norm, only the parametrisation changes when using different norms.

\begin{proposition}
\label{th:normchange} Let $\Vert\cdot\Vert_{a}$ be a norm on $\mathbb{R}^{d}$
and $\mu$ be a $\Gamma_{d}(\alpha,\beta)$ distribution with $\alpha$ being a
finite measure on ${\mathbf{S}}_{\mathbb{R}^{d},\Vert\cdot\Vert_{a}}$ and
$\beta:{\mathbf{S}}_{\mathbb{R}^{d},\Vert\cdot\Vert_{a}}\rightarrow
\mathbb{R}_{+}$ measurable. If $\Vert\cdot\Vert_{b}$ is another norm on
$\mathbb{R}^{d}$, then $\mu$ is a $\Gamma_{d}(\alpha_{b},\beta_{b})$
distribution with $\alpha_{b}$ being a finite measure on ${\mathbf{S}%
}_{\mathbb{R}^{d},\Vert\cdot\Vert_{b}}$ and $\beta_{b}:{\mathbf{S}%
}_{\mathbb{R}^{d},\Vert\cdot\Vert_{b}}\rightarrow\mathbb{R}_{+}$ measurable.
Moreover, it holds that
\begin{align}
\alpha_{b}(E)  &  =\int_{{\mathbf{S}}_{\mathbb{R}^{d},\Vert\cdot\Vert_{a}}%
}1_{E}\left(  \frac{v}{\Vert v\Vert_{b}}\right)  \alpha(\mathrm{d}v)\, &
\forall\,E  &  \in\mathcal{B}({\mathbf{S}}_{\mathbb{R}^{d},\Vert\cdot\Vert
_{b}})\label{eq:anc}\\
\beta_{b}(v_{b})  &  =\beta\left(  \frac{v_{b}}{\Vert v_{b}\Vert_{a}}\right)
\Vert v_{b}\Vert_{a}\, & \forall\,v_{b}  &  \in{\mathbf{S}}_{\mathbb{R}%
^{d},\Vert\cdot\Vert_{b}}. \label{eq:bnc}%
\end{align}

\end{proposition}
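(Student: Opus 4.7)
The plan is to recognize that formula \eqref{eq:gamlevmeas} for the L\'evy measure is simply a polar decomposition with respect to $\Vert\cdot\Vert_a$, and to re-express it as a polar decomposition with respect to $\Vert\cdot\Vert_b$. The key object is the ray-identifying bijection $\Phi:{\mathbf{S}}_{\mathbb{R}^{d},\Vert\cdot\Vert_{a}}\to{\mathbf{S}}_{\mathbb{R}^{d},\Vert\cdot\Vert_{b}}$, $v\mapsto v/\Vert v\Vert_b$, whose inverse is $v_b\mapsto v_b/\Vert v_b\Vert_a$. By equivalence of norms on $\mathbb{R}^d$, both $\Phi$ and $\Phi^{-1}$ are continuous, hence Borel, and $\Vert v\Vert_b$ is bounded away from $0$ and $\infty$ on ${\mathbf{S}}_{\mathbb{R}^{d},\Vert\cdot\Vert_{a}}$ (and similarly for $\Vert v_b\Vert_a$).

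Next I would carry out the change of variables $(r,v)\mapsto(r',v_b)$ with $r'=r\Vert v\Vert_b$, $v_b=\Phi(v)$, so that $x=rv=r'v_b$ and $\mathrm{d}r=\mathrm{d}r'/\Vert v\Vert_b$. The factor $\Vert v\Vert_b$ coming from the Jacobian cancels exactly against the one appearing when rewriting $1/r=\Vert v\Vert_b/r'$, leaving a clean $\mathrm{d}r'/r'$. Using $\Vert v\Vert_b=1/\Vert v_b\Vert_a$ (since $\Vert v\Vert_a=1$) and $v=v_b/\Vert v_b\Vert_a$, the exponent becomes
\[
-\beta(v)r=-\beta\!\left(\frac{v_b}{\Vert v_b\Vert_a}\right)\Vert v_b\Vert_a\,r'=-\beta_b(v_b)\,r',
\]
which reproduces \eqref{eq:bnc}. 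Together these identifications yield
\[
\nu_\mu(E)=\int_{{\mathbf{S}}_{\mathbb{R}^{d},\Vert\cdot\Vert_{a}}}\int_0^\infty 1_E(r'\Phi(v))\,\frac{e^{-\beta_b(\Phi(v))r'}}{r'}\,\mathrm{d}r'\,\alpha(\mathrm{d}v),
\]
and applying the pushforward formula for $\alpha_b=\alpha\circ\Phi^{-1}$ delivers \eqref{eq:anc} and rewrites $\nu_\mu$ in exactly the form \eqref{eq:gamlevmeas} relative to $\Vert\cdot\Vert_b$.

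Finally I would verify that the prerequisites for calling the resulting object a $\Gamma_d(\alpha_b,\beta_b)$ distribution are met: $\alpha_b$ is a finite measure, since $\alpha_b({\mathbf{S}}_{\mathbb{R}^{d},\Vert\cdot\Vert_{b}})=\alpha({\mathbf{S}}_{\mathbb{R}^{d},\Vert\cdot\Vert_{a}})<\infty$; and $\beta_b$ is Borel-measurable and $\mathbb{R}_+$-valued, being the product of the continuous map $\Vert\cdot\Vert_a$ on ${\mathbf{S}}_{\mathbb{R}^{d},\Vert\cdot\Vert_{b}}$ with the Borel composition $\beta\circ\Phi^{-1}$. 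Since the characteristic exponent \eqref{FTGMult} contains no Gaussian or drift term and the L\'evy measure of $\mu$ has just been rewritten in the new polar form, $\mu$ is indeed the $\Gamma_d(\alpha_b,\beta_b)$ distribution; the integrability condition \eqref{eq:condbeta} for $(\alpha_b,\beta_b)$ is then automatic from Proposition~\ref{ExitMulGam} applied to the common L\'evy measure $\nu_\mu$.

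The only real obstacle is clerical: tracking how the three factors of $\Vert v\Vert_b$ (from the Jacobian, from $1/r$, and implicit in the argument of $\beta$) combine so that the $1/r$ kernel and the exponential take the correct form in the new coordinates. Once this bookkeeping is done, the result amounts to a change of variables in a single measure, and no deeper analytic input is needed.
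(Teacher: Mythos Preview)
Your proposal is correct and follows essentially the same approach as the paper: the paper performs the identical substitutions $v_b=v/\Vert v\Vert_b$ and $s=r/\Vert v_b\Vert_a$ (which equals your $r'=r\Vert v\Vert_b$ via the relation $\Vert v\Vert_b=1/\Vert v_b\Vert_a$) directly in the characteristic exponent, whereas you carry them out at the level of the L\'evy measure and add explicit checks of finiteness and measurability that the paper leaves implicit.
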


The above formulae show that the mass in the different directions, which is
given by $\alpha$, does not change, and $\beta$ only needs to be adapted for
the scale changes implied by the change of the norm.

\begin{proof}
Substituting first $v_{b}=v/\Vert v\Vert_{b}$ and then $s=r/\Vert v_{b}%
\Vert_{a}$ gives:
\begin{align*}
&  \exp\left(  \int_{\mathbf{S}_{\mathbb{R}^{d},\Vert\cdot\Vert_{a}}}%
\int_{\mathbb{R}_{+}}\left(  e^{irv^{\top}z}-1\right)  \frac{e^{-\beta(v)r}%
}{r}\mathrm{d}r\alpha(\mathrm{d}v)\right) \\
&  \qquad\qquad=\exp\left(  \int_{\mathbf{S}_{\mathbb{R}^{d},\Vert\cdot
\Vert_{b}}}\int_{\mathbb{R}_{+}}\left(  e^{i\frac{r}{\Vert v_{b}\Vert_{a}%
}v_{b}^{\top}z}-1\right)  \frac{e^{-\beta\left(  \frac{v_{b}}{\Vert v_{b}%
\Vert_{a}}\right)  r}}{r}\mathrm{d}r\alpha_{b}(\mathrm{d}v_{b})\right) \\
&  \qquad\qquad=\exp\left(  \int_{\mathbf{S}_{\mathbb{R}^{d},\Vert\cdot
\Vert_{b}}}\int_{\mathbb{R}_{+}}\left(  e^{isv_{b}^{\top}z}-1\right)
\frac{e^{-\beta\left(  \frac{v_{b}}{\Vert v_{b}\Vert_{a}}\right)  \Vert
v_{b}\Vert_{a}s}}{s}\mathrm{d}s\alpha_{b}(\mathrm{d}v_{b})\right)  .
\end{align*}

\end{proof}

\subsection{Properties}

In this section we study several fundamental properties of our Gamma distributions.

\begin{proposition}
\label{SelfDecpMuG} Any $\Gamma_{d}(\alpha,\beta)$-distribution is self-decomposable.
\end{proposition}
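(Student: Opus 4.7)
The plan is to invoke the standard polar-decomposition criterion for self-decomposability (see, e.g., Sato \cite{sato1999}, Theorem 15.10): an infinitely divisible distribution $\mu$ on $\mathbb{R}^d$ is self-decomposable if and only if its L\'evy measure admits a polar representation
\[
\nu_\mu(E) = \int_{\mathbf{S}_{\mathbb{R}^d,\|\cdot\|}} \int_0^\infty 1_E(rv)\,\frac{k_v(r)}{r}\,\mathrm{d}r\,\lambda(\mathrm{d}v),
\]
where $\lambda$ is a finite measure on the unit sphere and, for $\lambda$-a.e. $v$, the radial density $r\mapsto k_v(r)$ is nonnegative and nonincreasing on $(0,\infty)$.

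First, I would note that formula \eqref{eq:gamlevmeas} is already precisely such a polar decomposition, with $\lambda = \alpha$ and
\[
k_v(r) = e^{-\beta(v)r},\qquad r>0,\ v\in \mathbf{S}_{\mathbb{R}^d,\|\cdot\|}.
\]
Then I would verify the two required properties: nonnegativity of $k_v$ is obvious, and since $\beta(v)\geq 0$ by hypothesis, the map $r\mapsto e^{-\beta(v)r}$ is manifestly nonincreasing for every $v$ (in fact strictly decreasing where $\beta(v)>0$, and constant where $\beta(v)=0$). Proposition~\ref{ExitMulGam} guarantees that $\nu_\mu$ is a genuine L\'evy measure under the standing hypothesis \eqref{eq:condbeta}, so $\mu$ is a well-defined infinitely divisible distribution and the criterion applies.

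There is essentially no obstacle here: the whole argument is a one-line verification once the correct characterisation is invoked. The only minor point worth being explicit about is that the version of the polar-decomposition criterion in \cite{sato1999} is usually stated for the Euclidean unit sphere, whereas our representation uses $\mathbf{S}_{\mathbb{R}^d,\|\cdot\|}$ for an arbitrary norm; by Proposition~\ref{th:normchange} we may, without loss of generality, pass to the Euclidean norm, under which the transformed function $\beta_b$ remains nonnegative and the radial profile remains of the form $e^{-\beta_b(v)r}$, hence still nonincreasing in $r$. This completes the proof.
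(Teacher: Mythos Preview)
Your proof is correct and is essentially the same as the paper's: the paper's proof is the single line ``This follows immediately from the definition and \cite[Th.~15.10]{sato1999},'' and your proposal merely unpacks this by identifying $k_v(r)=e^{-\beta(v)r}$ and checking monotonicity. The additional remark about passing to the Euclidean norm via Proposition~\ref{th:normchange} is a harmless elaboration.
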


\begin{proof}
This follows immediately from the definition and \cite[Th. 15.10]{sato1999}.
\end{proof}

Later on we will considerably improve this result by showing that we are in a
very special subset of the self-decomposable distributions. This result has
important implications for applications where one likes to work with
distributions having densities, i.e. distributions which are absolutely
continuous (with respect to the Lebesgue measure).

\begin{proposition}
\label{DenMuG} Assume that $\operatorname{supp}{\alpha}$ is of full dimension,
i.e. that it contains $d$ linearly independent vectors in $\mathbb{R}^{d}$.
Then the $\Gamma_{d}(\alpha,\beta)$-distribution is absolutely continuous.
\end{proposition}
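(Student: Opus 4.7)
The plan is to combine the self-decomposability established in Proposition \ref{SelfDecpMuG} with the classical result (see Sato \cite{sato1999}) that every genuinely $d$-dimensional self-decomposable distribution on $\mathbb{R}^{d}$ is absolutely continuous with respect to Lebesgue measure. Here ``genuinely $d$-dimensional'' means that $\operatorname{supp}(\mu)$ is not contained in any proper affine subspace of $\mathbb{R}^{d}$. Once this external result is invoked, the task reduces to verifying the genuine $d$-dimensionality of $\mu$ under the stated hypothesis on $\operatorname{supp}\alpha$.

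I would carry out this verification at the level of the L\'evy measure. The polar representation \eqref{eq:gamlevmeas}, together with the strict positivity of $r\mapsto e^{-\beta(v)r}/r$ on $\mathbb{R}_{+}$, gives
\[
\operatorname{supp}(\nu_{\mu})=\{rv:r>0,\;v\in\operatorname{supp}\alpha\}.
\]
By assumption $\operatorname{supp}\alpha$ contains $d$ linearly independent unit vectors $v_{1},\ldots,v_{d}$, so that for any fixed $r>0$ the vectors $rv_{1},\ldots,rv_{d}$ all lie in $\operatorname{supp}(\nu_{\mu})$, which therefore spans $\mathbb{R}^{d}$ linearly. Applying the standard description of the support of an infinitely divisible law (the affine hull of $\operatorname{supp}(\mu)$ contains the drift translated by the closed convex cone generated by $\operatorname{supp}(\nu_{\mu})$), one concludes that $\operatorname{supp}(\mu)$ cannot be contained in any proper affine subspace of $\mathbb{R}^{d}$, so that the absolute-continuity theorem above applies to $\mu$.

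The main subtle point is the invocation of the multivariate version of the absolute-continuity theorem. In dimension one, every non-degenerate self-decomposable law is automatically absolutely continuous (Wolfe, Sato--Yamazato), but in dimension $d\geq 2$ the embedding of a univariate Gamma distribution into a one-dimensional subspace of $\mathbb{R}^{d}$ is self-decomposable yet singular with respect to $d$-dimensional Lebesgue measure; thus genuine $d$-dimensionality truly is a needed hypothesis. Consequently the heart of the argument is the bridge from ``$\operatorname{supp}\alpha$ contains $d$ linearly independent vectors'' to ``$\operatorname{supp}(\mu)$ affinely spans $\mathbb{R}^{d}$'', after which the external theorem takes over.
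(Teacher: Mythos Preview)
Your proof is correct and follows essentially the same approach as the paper: establish self-decomposability (Proposition~\ref{SelfDecpMuG}), verify that the distribution is non-degenerate (genuinely $d$-dimensional) from the full-dimensionality of $\operatorname{supp}\alpha$, and then invoke Sato's theorem that non-degenerate self-decomposable laws are absolutely continuous. The only differences are cosmetic---the paper states directly that $\operatorname{supp}(\mu)$ is the closed convex cone generated by $\operatorname{supp}\alpha$ and cites \cite{Sato1982} rather than \cite{sato1999}, whereas you work through the L\'evy measure explicitly.
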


\begin{proof}
It is immediate that the support of $\ \Gamma_{d}(\alpha,\beta)$ is the closed
convex cone generated by $\operatorname{supp}{\alpha}$. Hence, the support of
$\Gamma_{d}(\alpha,\beta)$ is of full dimension and so the distribution is
non-degenerate. Thus \cite{Sato1982} concludes.
\end{proof}

It follows along the same lines that in the degenerate case the $\Gamma
_{d}(\alpha,\beta)$-distribution is absolutely continuous with respect to the
Lebesgue measure on the subspace generated by $\operatorname{supp}{\alpha}$.
If $\operatorname{supp}{\alpha}$ consists of exactly $d$ linearly independent
vectors, $\Gamma_{d}(\alpha,\beta)$ equals the distribution of a linear
transformation of a vector of $d$ independent univariate Gamma random
variables with appropriate parameters and thus the density can be calculated
easily using the density transformation theorem with an invertible linear map.
If $\operatorname{supp}{\alpha}$ is a finite set of full dimension, one can
calculate the density from the density of independent univariate Gamma random
variables by using the density transformation theorem with an invertible
linear map and integrating out the non-relevant dimensions. In general the
density can be determined via solving a partial integro-differential equation
(see \cite{SatoYamazato1984}). Moreover, criteria for qualitative properties
of the density like continuity and continuous differentiability can be deduced
from the results of \cite{Sato1979,Sato1980}, but looking at the simple case
of a vector of independent univariate Gamma distributions one immediately sees
that the sufficient conditions given there are far from being sharp. Therefore
we refrain from giving more details.

Next we show that our $d$-dimensional Gamma distribution has the same
closedness properties regarding scaling and convolution as the usual
univariate one.

\begin{proposition}
(i) Let $X\sim\Gamma_{d}(\alpha,\beta)$ and $c>0$. Then $cX\sim\Gamma
_{d}(\alpha,\beta/c)$.

(ii) Let $X_{1}\sim\Gamma_{d}(\alpha_{1},\beta)$ and $X_{2}\sim\Gamma
_{d}(\alpha_{2},\beta)$ be two independent $d$-dimensional Gamma variables.
Then $X_{1}+X_{2}\sim\Gamma_{d}(\alpha_{1}+\alpha_{2},\beta)$.
\end{proposition}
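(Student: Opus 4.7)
The plan is to prove both statements by direct manipulation of the characteristic function formula \eqref{FTGMult}, which uniquely determines the distribution.

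For part (i), I would start from $\hat{\mu}_{cX}(z) = \hat{\mu}_X(cz)$ and substitute $cz$ into \eqref{FTGMult}:
\[
\hat{\mu}_{cX}(z)=\exp\left(\int_{\mathbf{S}_{\mathbb{R}^{d},\Vert\cdot\Vert}}\int_{\mathbb{R}_{+}}\left(e^{ircv^{\top}z}-1\right)\frac{e^{-\beta(v)r}}{r}\mathrm{d}r\,\alpha(\mathrm{d}v)\right).
\]
Then I would perform the change of variable $s=cr$ in the inner integral (so $\mathrm{d}r/r=\mathrm{d}s/s$), which turns the exponent in the numerator into $e^{-(\beta(v)/c)s}$ while leaving the $\alpha$ measure untouched. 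The result matches \eqref{FTGMult} with $\beta$ replaced by $\beta/c$, so $cX\sim\Gamma_{d}(\alpha,\beta/c)$. One should also briefly note that the integrability condition \eqref{eq:condbeta} is preserved, since $\ln(1+c/\beta(v))\leq\ln(1+c)+\ln(1+1/\beta(v))$ for $c\geq 1$ and is dominated by a multiple of $\ln(1+1/\beta(v))$ when $c<1$; hence Proposition~\ref{ExitMulGam} guarantees existence of the new distribution.

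For part (ii), the argument is even simpler: independence gives $\hat{\mu}_{X_{1}+X_{2}}(z)=\hat{\mu}_{X_{1}}(z)\,\hat{\mu}_{X_{2}}(z)$. Since both factors are exponentials with the same integrand $(e^{irv^{\top}z}-1)e^{-\beta(v)r}/r$ and differ only in the outer measure on $\mathbf{S}_{\mathbb{R}^{d},\Vert\cdot\Vert}$, the sum of the two exponents equals a single integral against $\alpha_{1}+\alpha_{2}$, which is precisely the characteristic exponent of a $\Gamma_{d}(\alpha_{1}+\alpha_{2},\beta)$ distribution. Existence again follows from \eqref{eq:condbeta}, which is linear in $\alpha$ and hence inherited by $\alpha_{1}+\alpha_{2}$.

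There is no real obstacle here: both statements are essentially bookkeeping on the L\'evy--Khintchine exponent, entirely parallel to the univariate case. The only mild point worth flagging is confirming the integrability side condition after the transformations, but in both cases this is a one-line verification using Proposition~\ref{ExitMulGam}.
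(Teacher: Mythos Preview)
Your proof is correct and follows the same approach as the paper, which simply states that the result ``follows immediately from considering the characteristic functions.'' Your version is more detailed---in particular, spelling out the change of variable and verifying the integrability condition \eqref{eq:condbeta}---but the underlying idea is identical.
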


\begin{proof}
Follows immediately from considering the characteristic functions.
\end{proof}

Likewise it is immediate to see the following distributional properties of the
induced L\'evy process.

\begin{proposition}
Let $L$ be a $\Gamma_{d}(\alpha,\beta)$ L\'evy process, i.e. $L_{1}\sim
\Gamma_{d}(\alpha,\beta)$. Then $L_{t}\sim\Gamma_{d}(t\alpha,\beta)$ for all
$t\in\mathbb{R}_{+}$.
\end{proposition}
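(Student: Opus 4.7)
The plan is to use the defining property of a L\'evy process, namely that the characteristic function of $L_t$ is the $t$-th power of the characteristic function of $L_1$. Concretely, if $\hat\mu$ denotes the characteristic function of $L_1\sim\Gamma_d(\alpha,\beta)$, then for every $z\in\mathbb{R}^d$ and $t\in\mathbb{R}_+$ we have $\mathbb{E}[e^{i\langle z,L_t\rangle}]=\hat\mu(z)^t$, and this remains true for the non-integer $t$ (see, e.g., \cite{sato1999}).

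Starting from the explicit form \eqref{FTGMult} for $\hat\mu(z)$, raising to the power $t$ simply multiplies the double integral in the exponent by $t$. By linearity of the integral in the measure $\alpha$, the constant $t$ can be absorbed into $\alpha$, producing the measure $t\alpha$ on ${\mathbf{S}}_{\mathbb{R}^d,\Vert\cdot\Vert}$, while the function $\beta$ remains untouched. The resulting expression is precisely the characteristic function of a $\Gamma_d(t\alpha,\beta)$-distribution as given in \eqref{FTGMult}. Since $t\alpha$ is again a finite measure on the sphere and the integrability condition \eqref{eq:condbeta} is preserved (the left-hand side just gets multiplied by $t$), Proposition \ref{ExitMulGam} ensures this is a bona fide Gamma distribution, and the uniqueness of the characteristic function yields $L_t\sim\Gamma_d(t\alpha,\beta)$.

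There is essentially no obstacle here; the argument is a one-line verification of characteristic functions, and the only thing to note is that the parameter entering multiplicatively in the L\'evy-Khintchine exponent is the angular measure $\alpha$ rather than the scale function $\beta$, which is exactly what allows the time scaling of a L\'evy process to be absorbed into $\alpha$.
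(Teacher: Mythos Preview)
Your proof is correct and is precisely the argument the paper has in mind: the paper does not spell out a proof at all, merely remarking that the result is ``immediate to see'' in the same spirit as the preceding proposition, whose proof reads ``Follows immediately from considering the characteristic functions.'' You have simply made this immediate verification explicit, including the (automatic) check that $t\alpha$ still satisfies \eqref{eq:condbeta}.
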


Of high importance for applications is that the class of $\Gamma_{d}$
distributions is invariant under invertible linear transformations.

\begin{proposition}
\label{LinTransGam} Let $X\sim\Gamma_{d}(\alpha,\beta)$ (with respect to the
norm $\Vert\cdot\Vert$) and $A$ be an invertible $d\times d$ matrix. Then
$AX\sim\Gamma_{d}(\alpha_{A},\beta_{A})$ with respect to the norm $\Vert
\cdot\Vert_{A}=\Vert A^{-1}\cdot\Vert$ and
\begin{align}
\alpha_{A}(E)  &  =\int_{{\mathbf{S}}_{\mathbb{R}^{d},\Vert\cdot\Vert}}%
1_{E}\left(  Av\right)  \alpha(\mathrm{d}v)\,=\alpha(A^{-1}E) & \forall\,E  &
\in\mathcal{B}({\mathbf{S}}_{\mathbb{R}^{d},\Vert\cdot\Vert_{A}})\label{3.5}\\
\beta_{A}(v)  &  =\beta\left(  A^{-1}v\right)  & \forall\,v  &  \in
{\mathbf{S}}_{\mathbb{R}^{d},\Vert\cdot\Vert_{A}}. \label{3.6}%
\end{align}

\end{proposition}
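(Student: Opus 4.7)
The plan is to compute the characteristic function of $AX$ directly and recast it into the form \eqref{FTGMult} with parameters $(\alpha_A,\beta_A)$ relative to the norm $\|\cdot\|_A = \|A^{-1}\cdot\|$, thereby invoking Definition \ref{def:mgamma}. Since $A$ is invertible, $\|\cdot\|_A$ is indeed a norm, and the linear map $v\mapsto Av$ restricts to a Borel-measurable bijection between $\mathbf{S}_{\mathbb{R}^d,\|\cdot\|}$ and $\mathbf{S}_{\mathbb{R}^d,\|\cdot\|_A}$, because $\|Av\|_A = \|A^{-1}Av\| = \|v\|$. This is the geometric fact that makes everything match up.

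The core computation is to start from $\widehat{\mu_{AX}}(z) = \widehat{\mu_X}(A^\top z)$, write $v^\top A^\top z = (Av)^\top z$ inside the exponent of \eqref{FTGMult}, and then substitute $w = Av$ in the outer integral, leaving the inner integral in $r$ untouched. Because $v = A^{-1}w$, the factor $\beta(v)r$ becomes $\beta(A^{-1}w)r = \beta_A(w)r$, and the pushforward of $\alpha$ under $v\mapsto Av$ is exactly the measure $\alpha_A$ defined in \eqref{3.5}. The result is
\begin{equation*}
\widehat{\mu_{AX}}(z) = \exp\left( \int_{\mathbf{S}_{\mathbb{R}^d,\|\cdot\|_A}} \int_{\mathbb{R}_+} \left(e^{irw^\top z} - 1\right) \frac{e^{-\beta_A(w)r}}{r}\,\mathrm{d}r\,\alpha_A(\mathrm{d}w) \right),
\end{equation*}
which is precisely the characteristic function of a $\Gamma_d(\alpha_A,\beta_A)$-distribution with respect to $\|\cdot\|_A$.

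The remaining verifications are routine and present no real obstacle: $\alpha_A$ is finite since it is a pushforward of the finite measure $\alpha$ and has the same total mass; $\beta_A = \beta\circ A^{-1}$ is Borel-measurable as a composition of Borel-measurable maps; and the integrability condition \eqref{eq:condbeta} for $(\alpha_A,\beta_A)$ is inherited automatically, because the displayed right-hand side is the characteristic function of a genuine probability distribution (the law of $AX$), so Proposition \ref{ExitMulGam} applied to $(\alpha_A,\beta_A)$ gives the condition for free. Alternatively, one could check \eqref{eq:condbeta} directly by applying the same change of variables to the original condition for $(\alpha,\beta)$. The only step that requires any care is confirming the bijectivity and measurability of $v\mapsto Av$ between the two spheres, which is immediate from the invertibility of $A$.
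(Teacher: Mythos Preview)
Your proof is correct and follows essentially the same route as the paper: both compute the characteristic function of $AX$ via $\widehat{\mu_{AX}}(z)=\widehat{\mu_X}(A^\top z)$, rewrite $v^\top A^\top z=(Av)^\top z$, and substitute $u=Av$ in the outer integral. Your write-up is in fact more thorough, since you make explicit the sphere-to-sphere bijection and the routine checks on $\alpha_A$ and $\beta_A$ that the paper leaves implicit.
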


\begin{proof}
We have for all $z\in\mathbb{R}^{d}$
\begin{align*}
E\left(  e^{i<z,AX>}\right)  =  &  \int_{\mathbb{R}^{d}}e^{i<z,Ax>}%
\mu(dx)=\int_{\mathbf{S}_{\mathbb{R}^{d},\Vert\cdot\Vert}}\int_{\mathbb{R}%
_{+}}\left(  e^{irv^{\top}A^{\top}z}-1\right)  \frac{e^{-\beta(v)r}}%
{r}dr\alpha(dv)\\
=  &  \int_{\mathbf{S}_{\mathbb{R}^{d},\Vert\cdot\Vert_{A}}}\int
_{\mathbb{R}_{+}}\left(  e^{iru^{\top}z}-1\right)  \frac{e^{-\beta(A^{-1}u)r}%
}{r}dr\alpha(A^{-1}du)\\
=  &  \int_{\mathbf{S}_{\mathbb{R}^{d},\Vert\cdot\Vert_{A}}}\int
_{\mathbb{R}_{+}}\left(  e^{iru^{\top}z}-1\right)  \frac{e^{-\beta_{A}(u)r}%
}{r}dr\alpha_{A}(du)
\end{align*}
where we substituted $u=Av$.
\end{proof}

It is easy to see that the above proposition can be extended to $m\times d$
matrices of full rank with $m>d$. Obviously, such a result cannot hold in
general for a linear transformation $A$ with $\ker(A)\not =\{0\}$, since
combinations of one dimensional Gamma distributions are in general not
univariate Gamma distributions.

Next we present an alternative representation of the characteristic function.

\begin{proposition}
\label{th:fouriertransf} Let $\mu$ be $\Gamma_{d}(\alpha,\beta)$ distributed.
Then the characteristic function is given by
\begin{equation}
\hat{\mu}(z)=\exp\left(  \int_{{\mathbf{S}}_{\mathbb{R}^{d},\Vert\cdot\Vert}%
}\ln\left(  \frac{\beta(v)}{\beta(v)-iv^{\top}z}\right)  \alpha(\mathrm{d}%
v)\right)  \,\mbox{ for all }\,z\in\mathbb{R}^{d} \label{eq:fourier}%
\end{equation}
where $\ln$ is the main branch of the complex logarithm.
\end{proposition}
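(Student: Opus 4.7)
The plan is to reduce the double integral in \eqref{FTGMult} to a single integral by evaluating the inner $\mathrm{d}r$-integral explicitly via a Frullani-type identity, separately for each $v \in \mathbf{S}_{\mathbb{R}^d,\Vert\cdot\Vert}$ with $\beta(v)>0$ (which by \eqref{eq:condbeta} is an $\alpha$-conull set).

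Concretely, I would fix such a $v$, set $w = v^{\top}z \in \mathbb{R}$, and rewrite the inner integrand as
\[
\bigl(e^{irw}-1\bigr)\frac{e^{-\beta(v)r}}{r}
=\frac{e^{-(\beta(v)-iw)r}-e^{-\beta(v)r}}{r}.
\]
The goal is then to establish the complex Frullani identity
\[
I(v)\;=\;\int_{0}^{\infty}\frac{e^{-(\beta(v)-iw)r}-e^{-\beta(v)r}}{r}\,\mathrm{d}r
\;=\;\ln\!\left(\frac{\beta(v)}{\beta(v)-iw}\right),
\]
where $\ln$ is the principal branch. Since $\operatorname{Re}(\beta(v)-iw)=\beta(v)>0$, the ratio $\beta(v)/(\beta(v)-iw)$ lies in $\mathbb{C}\setminus(-\infty,0]$, so the principal logarithm is unambiguous.

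To prove this identity, the cleanest route is differentiation under the integral in the parameter $w$: the function $w \mapsto I(v)$ is well-defined (the integrand is $L^1$ near $r=0$ because the numerator vanishes linearly, and integrable at $\infty$ since $\beta(v)>0$), and differentiating produces the elementary integral $\int_0^\infty i\, e^{-(\beta(v)-iw)r}\,\mathrm{d}r = i/(\beta(v)-iw)$. Integrating from $w=0$ (where $I(v)=0$) along the real axis yields $I(v) = -\ln(1 - iw/\beta(v)) = \ln(\beta(v)/(\beta(v)-iw))$. A safe alternative is to verify the real Frullani formula $\int_0^\infty (e^{-ar}-e^{-br})/r\,\mathrm{d}r = \ln(b/a)$ for $a,b>0$ and then analytically continue in $a$ to $\{\operatorname{Re}(a)>0\}$, using that both sides are holomorphic in $a$ on this half-plane.

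The main obstacle, which is really only a bookkeeping issue, is the application of Fubini to write $\hat\mu(z) = \exp(\int_{\mathbf{S}} I(v)\,\alpha(\mathrm{d}v))$. This requires verifying that $v\mapsto I(v)$ is $\alpha$-integrable; this follows from the bound
\[
|I(v)|=\Bigl|\ln\!\tfrac{\beta(v)}{\beta(v)-iv^\top z}\Bigr|
\;\le\; C\Bigl(1+\ln\!\bigl(1+\tfrac{1}{\beta(v)}\bigr)\Bigr)
\]
(uniform in $v$ on $\mathbf{S}$, with a constant depending on $z$ and $\|v\|$), combined with \eqref{eq:condbeta} and the finiteness of $\alpha$. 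With this justification in place, taking the exponential of the $\alpha$-integral of $I(v)$ produces \eqref{eq:fourier}.
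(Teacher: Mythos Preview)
Your approach is correct and is essentially the same as the paper's: the paper simply quotes the Frullani-type identity
\[
\int_{0}^{\infty}\left(e^{irv^{\top}z}-1\right)\frac{e^{-\beta(v)r}}{r}\,\mathrm{d}r=\ln\!\left(\frac{\beta(v)}{\beta(v)-iv^{\top}z}\right)
\]
as a ``well known fact'' and substitutes it into \eqref{FTGMult}, whereas you supply a proof of this identity and some additional justification. One minor remark: the integral in \eqref{FTGMult} is already iterated in the order $\int_{\mathbf{S}}\int_{\mathbb{R}_+}\ldots\,\mathrm{d}r\,\alpha(\mathrm{d}v)$, so no Fubini swap is needed---your integrability check for $v\mapsto I(v)$ is really just re-verifying that the outer integral converges, which is already implicit in Proposition~\ref{ExitMulGam}.
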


\begin{proof}
Follows from the definition and\ the well known fact%
\[
\int_{0}^{\infty}\left(  e^{-r(-iv^{\top})z}-1\right)  \frac{e^{-\beta(v)r}%
}{r}\mathrm{d}r=\ln\left(  \frac{\beta(v)}{\beta(v)-iv^{\top}z}\right)  .
\]

\end{proof}

Note that if $\alpha$ has countable support $\{v_{j}\}_{j\in\mathbb{N}}$,
then
\[
\hat{\mu}(z)=\prod_{j\in\mathbb{N}}\left(  \frac{\beta(v_{j})}{\beta
(v_{j})-iv_{j}^{\top}z}\right)  ^{\alpha(\{v_{j}\})}.
\]

We now show that the Fourier-Laplace transform of a Gamma distribution exists
if and to a certain extent only if $\beta$ is bounded away from zero $\alpha$
almost everywhere.

\begin{theorem}
\label{th:exfourlapl} (i) The Fourier-Laplace transform $\hat{\mu}$ of a
$\Gamma_{d}(\alpha,\beta)$ distribution $\mu$ exists for all $z$ in a
neighborhood $U\subseteq\mathbb{C}^{d}$ of zero, if $\beta(v)\geq\kappa$ for
$v\in{\mathbf{S}}_{\mathbb{R}^{d},\Vert\cdot\Vert}$ $\alpha$-a.e. with
$\kappa>0$. $\hat{\mu}$ is analytic there and given by formula \eqref{eq:fourier}.

(ii) If there exists a sequence $(v_{n})_{n\in\mathbb{N}}$ in ${\mathbf{S}%
}_{\mathbb{R}^{d},\Vert\cdot\Vert}$ with $\lim_{n\rightarrow\infty}\beta
(v_{n})=0$ and $\alpha(\{v_{n}\})>0$ for all $n\in\mathbb{N}$, then the
Fourier-Laplace transform $\hat{\mu}$ exists in no neighborhood $U\subseteq
\mathbb{C}^{d}$ of zero.
\end{theorem}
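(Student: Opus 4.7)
The plan is to deduce both parts from Theorem~25.17 of \cite{sato1999}, which tells us that the moment generating function $c\mapsto\int_{\bbr^d}e^{\langle c,x\rangle}\mu(\mathrm{d}x)$ of an infinitely divisible $\mu$ is finite at $c$ iff $\int_{\Vert x\Vert>1}e^{\langle c,x\rangle}\nu_\mu(\mathrm{d}x)<\infty$, and that in this case $\hat\mu$ extends analytically to the tube domain whose imaginary base is this set. Both directions then reduce to estimating $\int_{\Vert x\Vert>1}e^{\langle c,x\rangle}\nu_\mu(\mathrm{d}x)$ in polar coordinates via \eqref{eq:gamlevmeas}, which equals
\begin{equation*}
\int_{\bs_{\bbr^d,\Vert\cdot\Vert}}\int_1^\infty e^{r(c^\top v-\beta(v))}\,\frac{\mathrm{d}r}{r}\,\alpha(\mathrm{d}v).
\end{equation*}

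For part (i), if $\Vert c\Vert_*<\kappa$ (dual norm), then for $\alpha$-a.e.\ $v$ we have $c^\top v-\beta(v)\le\Vert c\Vert_*-\kappa<0$, so the inner integral is bounded uniformly by $\Ei_1(\kappa-\Vert c\Vert_*)<\infty$ exactly as in the proof of Proposition~\ref{ExitMulGam}, and the outer integral is finite because $\alpha$ is finite. Sato's theorem then yields existence and analyticity of $\hat\mu$ on the tube $\{z\in\bbc^d:\Vert\mathrm{Im}(z)\Vert_*<\kappa\}$. To identify the formula I would observe that on this tube $\beta(v)-iv^\top z$ has strictly positive real part $\ge\kappa-\Vert\mathrm{Im}(z)\Vert_*$, so the principal logarithm in \eqref{eq:fourier} is well-defined and pointwise analytic in $z$; uniform integrability in $v$ (the same bound justifies differentiation under the integral) makes the right-hand side of \eqref{eq:fourier} analytic on the whole tube. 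Since it agrees with $\hat\mu$ on $\bbr^d$ by Proposition~\ref{th:fouriertransf}, the identity theorem for analytic functions extends the formula to the tube.

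For part (ii) I would argue by contraposition: if $\hat\mu$ existed on some neighbourhood $U$ of $0$ in $\bbc^d$, then taking $z=-ib$ for small real $b$ shows the MGF is finite in a real neighbourhood of $0$, and Sato 25.17 again gives $\int_{\Vert x\Vert>1}e^{\langle b,x\rangle}\nu_\mu(\mathrm{d}x)<\infty$ there. It therefore suffices to produce, in every such neighbourhood, a vector $b$ at which this integral diverges. Setting $c_0:=\inf_{v\in\bs_{\bbr^d,\Vert\cdot\Vert}}\Vert v\Vert_2^2>0$ and $C_0:=\sup_{v\in\bs_{\bbr^d,\Vert\cdot\Vert}}\Vert v\Vert_2<\infty$ (both finite and positive by compactness of $\bs_{\bbr^d,\Vert\cdot\Vert}$ away from $0$), given $\delta>0$ pick $\epsilon\in(0,\delta/C_0)$ and, using $\beta(v_n)\to0$, choose $n$ with $\beta(v_n)<\epsilon c_0$. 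The vector $b:=\epsilon v_n$ satisfies $\Vert b\Vert_2<\delta$, yet because $\alpha(\{v_n\})>0$ the atom at $v_n$ alone contributes
\begin{equation*}
\int_{\Vert x\Vert>1}e^{\langle b,x\rangle}\nu_\mu(\mathrm{d}x)\ \ge\ \alpha(\{v_n\})\int_1^\infty e^{r(\epsilon\Vert v_n\Vert_2^2-\beta(v_n))}\,\frac{\mathrm{d}r}{r}=\infty,
\end{equation*}
since the exponent is at least $\epsilon c_0-\beta(v_n)>0$. The routine bookkeeping linking the complex extension of $\hat\mu$ to the polar-decomposition estimate is the only delicate part; once Sato's theorem is invoked, both directions follow from direct manipulations of the explicit Lévy measure and no further analytic-continuation argument is required.
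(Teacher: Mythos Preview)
Your proof is correct and takes a genuinely different route from the paper's. The paper first reduces to the Euclidean norm via Proposition~\ref{th:normchange} and then works directly with the explicit logarithmic representation of Proposition~\ref{th:fouriertransf}: for (i) it simply bounds $|iv^\top z/\beta(v)|\le\delta<1$ on $\overline B_{\delta\kappa}(0)$ so that the principal logarithm is well-defined and uniformly bounded, and for (ii) it exhibits the purely imaginary points $z_n=-i\beta(v_n)v_n$ at which $1-iv_n^\top z_n/\beta(v_n)=0$, so the logarithm (and hence the integral formula) fails. Your approach instead reduces everything to Sato's Theorem~25.17, turning both parts into estimates of $\int_{\Vert x\Vert>1}e^{\langle c,x\rangle}\nu_\mu(\mathrm dx)$ in polar coordinates: this buys you a cleaner logical structure in (ii), where the paper's passage from ``the formula does not make sense at $z_n$'' to ``$\hat\mu(z_n)$ does not exist'' is a bit elliptic, and it also gives the slightly sharper tube domain $\{\Vert\mathrm{Im}\,z\Vert_*<\kappa\}$ in (i). The paper's argument, on the other hand, is more elementary---it avoids invoking the general moment-generating criterion and the identity theorem, relying only on the explicit form of $\hat\mu$ already established.
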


\begin{proof}
Using Proposition \ref{th:normchange} we can assume w.l.o.g. that the
Euclidean norm $\Vert\cdot\Vert_{2}$ is used for the definition of the
$\Gamma_{d}(\alpha,\beta)$ distribution.

(i) We will now show (i) for $U=B_{\kappa}(0)\subseteq\mathbb{C}^{d}$, where
$B_{\kappa}(0):=\{x\in\mathbb{C}^{d}:\Vert x\Vert_{2}<\kappa\}$. From
Proposition \ref{th:fouriertransf} it is clear that $\hat{\mu}(z)$ exists for
all $z\in B_{\kappa}(0)\subseteq\mathbb{C}^{d}$, if and only if
\[
\int_{\mathbf{S}_{\mathbb{R}^{d},\Vert\cdot|_{2}}}\ln\left(  \frac{\beta
(v)}{\beta(v)-iv^{\top}z}\right)  \alpha(\mathrm{d}v)=-\int_{\mathbf{S}%
_{\mathbb{R}^{d},\Vert\cdot\Vert_{2}}}\ln\left(  1-\frac{iv^{\top}z}{\beta
(v)}\right)  \alpha(\mathrm{d}v)
\]
exists for all $z\in B_{\kappa}(0)$. Consider now an arbitrary $\delta
\in(0,1)$ and $z\in\overline{B}_{\delta\kappa}(0)$. Then the Cauchy-Schwarz
inequality implies $|iv^{\top}z|\leq\Vert z\Vert_{2}\leq\delta\kappa$ and
hence $|(iv^{\top}z)/\beta(v)|\leq\delta$. Therefore $\ln\left(
1-\frac{iv^{\top}z}{\beta(v)}\right)  $ exists and is bounded on $\overline
{B}_{\delta\kappa}(0)$ $\alpha$-a.e. This implies that
\[
-\int_{\mathbf{S}_{\mathbb{R}^{d},\Vert\cdot\Vert_{2}}}\ln\left(
1-\frac{iv^{\top}z}{\beta(v)}\right)  \alpha(\mathrm{d}v)
\]
exists on $\overline{B}_{\delta\kappa}(0)$. Since $\delta\in(0,1)$ was
arbitrary, this concludes the proof of (i), since the analyticity follows
immediately from the appendix of \cite{dfs}.

(ii) W.l.o.g. assume $\beta(v_{n})<1/n$. For $n\in\mathbb{N}$ set
$z_{n}=-i\beta(v_{n})v_{n}$. Then $\Vert z_{n}\Vert_{2}=\beta(v_{n})<1/n$ and
$1-(iv_{n}^{\top}z_{n})/\beta(v_{n})=0$. Hence,
\[
\int_{\{v_{n}\}}\ln\left(  1-\frac{iv^{\top}z_{n}}{\beta(v)}\right)
\alpha(\mathrm{d}v)
\]
and thereby
\[
\int_{{\mathbf{S}_{\mathbb{R}^{d},\Vert\cdot\Vert_{2}}}}\ln\left(
1-\frac{iv^{\top}z_{n}}{\beta(v)}\right)  \alpha(\mathrm{d}v)
\]
do not exist. This implies that $\hat{\mu}$ is not defined on $B_{1/n}(0)$.
Since $n\in\mathbb{N}$ was arbitrary, this shows (ii).
\end{proof}

\begin{remark}
To extend this result to the matrix case, one simply has to use the Frobenius
or trace norms and the scalar product $Z,X\mapsto\mathrm{tr}(X^{\top}Z)$
instead of the Euclidean norm and scalar product. We consider this in\ Section
\ref{sec:matrix}.
\end{remark}


\begin{proposition}
\label{MomMulGam}A $\Gamma_{d}(\alpha,\beta)$ distribution $\mu$ has a finite
moment of order $k>0$, i.e. $\int_{\mathbb{R}^{d}}\Vert x\Vert^{k}%
\mu(\mathrm{d}x)<\infty$, if and only if
\begin{equation}
\int_{{\mathbf{S}}_{\mathbb{R}^{d},\Vert\cdot\Vert}}\beta(v)^{-k}%
\alpha(\mathrm{d}v)<\infty. \label{eq:condrmom}%
\end{equation}
Moreover, if $m$ is the mean vector and $\Sigma=(\sigma_{ij})_{i,j=1,\ldots
,d}$ is the covariance matrix of $\ \Gamma_{d}(\alpha,\beta)$
\begin{equation}
m=\int_{\mathbf{S}_{\mathbb{R}^{d},\Vert\cdot\Vert}}\beta(v)^{-1}%
v\alpha(\mathrm{d}v). \label{meanGCC}%
\end{equation}
and%
\begin{equation}
\Sigma=\int_{\mathbf{S}_{\mathbb{R}^{d},\Vert\cdot\Vert}}\beta(v)^{-2}%
vv^{\top}\alpha(\mathrm{d}v) \label{CovGGC}%
\end{equation}

\end{proposition}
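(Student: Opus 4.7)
The plan is to reduce the moment condition to an integrability condition on the L\'evy measure, and then to read off the mean and covariance from integrals against $\nu_\mu$. For the first assertion, I would invoke the standard criterion (e.g.\ \cite[Cor.~25.8]{sato1999}) that an infinitely divisible $\mu$ on $\mathbb{R}^d$ satisfies $\int\|x\|^k\,\mu(\mathrm{d}x)<\infty$ if and only if $\int_{\|x\|>1}\|x\|^k\,\nu_\mu(\mathrm{d}x)<\infty$. Plugging in the polar representation \eqref{eq:gamlevmeas} converts the tail integral into
\[
\int_{\mathbf{S}_{\mathbb{R}^d,\|\cdot\|}}\int_1^\infty r^{k-1}e^{-\beta(v)r}\,\mathrm{d}r\,\alpha(\mathrm{d}v),
\]
so everything reduces to comparing this quantity with $\int_{\mathbf{S}}\beta(v)^{-k}\alpha(\mathrm{d}v)$.

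The substitution $s=\beta(v)r$ in the inner integral yields $\beta(v)^{-k}\int_{\beta(v)}^\infty s^{k-1}e^{-s}\,\mathrm{d}s$. I would then split the outer integral at the level set $\{\beta=1\}$: on $\{\beta<1\}$ the incomplete Gamma factor is sandwiched between the positive finite constants $\int_1^\infty s^{k-1}e^{-s}\,\mathrm{d}s$ and $\Gamma(k)$, so the inner integral is comparable (two-sided) to $\beta(v)^{-k}$; on $\{\beta\geq1\}$ both the inner integral and $\beta(v)^{-k}$ are uniformly bounded in $v$ and hence $\alpha$-integrable because $\alpha$ is finite. Gluing the two pieces produces the iff with \eqref{eq:condrmom}.

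For the mean and covariance formulas, the key point is that \eqref{FTGMult} contains no linear drift term and $\nu_\mu$ has finite variation by Proposition~\ref{ExitMulGam}, so $\mu$ is the law at time one of a drift-free, purely non-Gaussian, finite-variation L\'evy process. When the corresponding moments exist, a standard application of Fubini gives
\[
m=\int_{\mathbb{R}^d}x\,\nu_\mu(\mathrm{d}x),\qquad \Sigma=\int_{\mathbb{R}^d}xx^\top\,\nu_\mu(\mathrm{d}x),
\]
and passing to polar coordinates in each of these, together with the elementary identities $\int_0^\infty e^{-\beta r}\,\mathrm{d}r=\beta^{-1}$ and $\int_0^\infty r\,e^{-\beta r}\,\mathrm{d}r=\beta^{-2}$, produces \eqref{meanGCC} and \eqref{CovGGC}.

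The only slightly delicate point I expect is securing uniformity in $v$ for the two-sided comparison of the inner integral with $\beta(v)^{-k}$: this is precisely what forces the split at $\beta(v)=1$, since for $\beta(v)\downarrow 0$ the incomplete Gamma tail tends to $\Gamma(k)$ (i.e.\ is $\asymp 1$) while for $\beta(v)\to\infty$ exponential decay dominates and makes both sides vanish at different rates. All remaining steps are Fubini and routine manipulation.
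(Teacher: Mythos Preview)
Your proposal is correct and follows essentially the same route as the paper: invoke Sato's moment criterion to reduce to the tail integral of $\nu_\mu$, substitute $s=\beta(v)r$, and sandwich the incomplete Gamma integral $\int_{\beta(v)}^\infty s^{k-1}e^{-s}\,\mathrm{d}s$ between positive constants; the mean and covariance formulae likewise come from the same identification of the L\'evy triplet (the paper cites \cite[Example~25.12]{sato1999} rather than spelling out the polar computation, but it amounts to the same thing). The only cosmetic difference is that the paper first disposes of the case where $\beta$ is bounded away from zero via Theorem~\ref{th:exfourlapl} and then assumes w.l.o.g.\ $\beta\le1$, whereas your explicit split at $\{\beta=1\}$ handles both regimes in one pass and is arguably cleaner.
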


\begin{proof}
If $\beta$ is bounded away from zero, \eqref{eq:condrmom} holds trivially and
Theorem \ref{th:exfourlapl} implies that $\mu$ has finite moments of all
orders $k>0$. So w.l.o.g. assume that $\beta$ is not bounded away from zero in
the following. By \cite[p. 162]{sato1999} $\mu$ has a finite moment of order
$k$, if and only if
\[
\int_{\mathbf{S}_{\mathbb{R}^{d},\Vert\cdot\Vert}}\int_{1}^{\infty}r^{k}%
\frac{e^{-\beta(v)r}}{r}\mathrm{d}r\alpha(\mathrm{d}v)<\infty.
\]
Substituting $s=r\beta(v)$ this is equivalent to
\begin{equation}
\int_{\mathbf{S}_{\mathbb{R}^{d},\Vert\cdot\Vert}}\beta(v)^{-k}\int_{\beta
(v)}^{\infty}s^{k-1}e^{-s}\mathrm{d}s\alpha(\mathrm{d}v)<\infty.
\label{eq:condmomh}%
\end{equation}
Assuming without loss of generality that $\beta(v)\leq1$ for all
$v\in\mathbf{S}_{\mathbb{R}^{d},\Vert\cdot\Vert}$, we have that
\[
0<C(k):=\int_{1}^{\infty}s^{k-1}e^{-s}\mathrm{d}s\leq\int_{\beta(v)}^{\infty
}s^{k-1}e^{-s}\mathrm{d}s\leq\Gamma(k).
\]
Hence, \eqref{eq:condmomh} is equivalent to \eqref{eq:condrmom}. Finally,
(\ref{meanGCC}) and (\ref{CovGGC}) follow from Example 25.12 in
\cite{sato1999} and observing that that the infinitely divisible distribution
$\Gamma_{d}(\alpha,\beta)$ with Fourier transform (\ref{FTGMult}) has Lévy
triplet $(\zeta,0,\nu_{\mu}),$ where $\zeta=\int_{\left\Vert x\right\Vert
\leq1}x\nu_{\mu}(\mathrm{d}x).$
\end{proof}

\begin{corollary}
A $\|\cdot\|$-homogeneous $\Gamma_{d}(\alpha,\beta)$ distribution has an
analytic Fourier-La\-place transform in $B_{\beta}(0)$ and finite moments of all orders.
\end{corollary}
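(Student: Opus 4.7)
The plan is to derive this corollary directly from the two main prior results, Theorem \ref{th:exfourlapl}(i) and Proposition \ref{MomMulGam}, by observing that a constant $\beta$ automatically provides the integrability hypotheses each of them requires. There is essentially no technical obstacle; the only thing to verify is that the constant $\beta(v)=\beta>0$ supplies the uniform lower bound $\kappa$ in Theorem \ref{th:exfourlapl}(i) and makes the integrand in \eqref{eq:condrmom} trivially integrable against the finite measure $\alpha$.

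First I would note that, by Definition \ref{def:mgamma}, a $\|\cdot\|$-homogeneous $\Gamma_d(\alpha,\beta)$-distribution has $\beta(v)\equiv\beta$ for some $\beta>0$ and every $v\in\mathbf{S}_{\mathbb{R}^d,\|\cdot\|}$. Consequently the standing hypothesis of Theorem \ref{th:exfourlapl}(i), namely that $\beta(v)\ge\kappa>0$ for $\alpha$-a.e.\ $v$, is satisfied with $\kappa=\beta$. Applying that theorem gives both the existence and the analyticity of the Fourier-Laplace transform on $B_\beta(0)\subseteq\mathbb{C}^d$, and indeed yields the explicit formula \eqref{eq:fourier}.

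Next, for any $k>0$ one has
\[
\int_{\mathbf{S}_{\mathbb{R}^d,\|\cdot\|}}\beta(v)^{-k}\alpha(\mathrm{d}v)=\beta^{-k}\alpha(\mathbf{S}_{\mathbb{R}^d,\|\cdot\|})<\infty,
\]
because $\alpha$ is finite by assumption. Hence condition \eqref{eq:condrmom} of Proposition \ref{MomMulGam} holds for every $k>0$, and therefore $\int_{\mathbb{R}^d}\|x\|^k\mu(\mathrm{d}x)<\infty$ for all positive $k$, establishing the claim about finite moments of all orders and completing the proof.

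Since both statements follow by direct substitution into results already proved, I do not anticipate any genuine difficulty; the only care needed is to record that analyticity (not just existence) on $B_\beta(0)$ is part of the conclusion of Theorem \ref{th:exfourlapl}(i), so no further argument of Cauchy-Riemann or power-series type is required.
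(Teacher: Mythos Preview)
Your proposal is correct and follows exactly the route the paper intends: the corollary is stated without proof because it is immediate from Theorem~\ref{th:exfourlapl}(i) (with $\kappa=\beta$) and Proposition~\ref{MomMulGam} (with \eqref{eq:condrmom} reducing to $\beta^{-k}\alpha(\mathbf{S}_{\mathbb{R}^d,\|\cdot\|})<\infty$). Your write-up captures precisely this.
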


Hence, any homogeneous Gamma distribution behaves like one would expect it
from the univariate case. However, the behaviour in the non-homogeneous case
may be drastically different, as the following examples illustrate.

\begin{example}
\label{ex:1} Consider $d=2$. Let $\alpha$ be concentrated on $\{v_{n}%
\}_{n\in\mathbb{N}}$ with
\[
v_{n}=(\sin(n^{-1}),\cos(n^{-1}))
\]
and set $\alpha(\{v_{n}\})=e^{-n}$ and $\beta(v_{n})=1/n$ for all
$n\in\mathbb{N}$. Then by Theorem \ref{th:exfourlapl} (ii) the Fourier-Laplace
transform exists in no neighbourhood of zero.

$\int_{{\mathbf{S}}_{\mathbb{R}^{d},\Vert\cdot\Vert_{2}}}\beta(v)^{-k}%
\alpha(\mathrm{d}v)=\sum_{n\in\mathbb{N}}n^{k}e^{-n}$ is finite for all $k>0$
using the quotient criterion, because
\[
\lim_{n\rightarrow\infty}\frac{(n+1)^{k}e^{-(n+1)}}{n^{k}e^{-n}}=e^{-1}<1.
\]

Thus, we have moments of all orders, but the Fourier-Laplace transform exists
in no complex neighbourhood of zero.
\end{example}

\begin{example}
Consider the set-up of Example \ref{ex:1}, but set now $\alpha(\{v_{n}%
\})=1/n^{1+m}$ for some real $m>0$. $\int_{{\mathbf{S}}_{\mathbb{R}^{d}%
,\Vert\cdot\Vert_{2}}}\beta(v)^{-k} \alpha(\mathrm{d}v)=\sum_{n\in\mathbb{N}%
}\frac{n^{k}}{n^{1+m}}$ is finite if and only if $k<m$.

It is easy to see that condition \eqref{eq:condbeta} is satisfied if condition
\eqref{eq:condrmom} holds for some $k>0$. Hence, the $\Gamma_{2}(\alpha
,\beta)$ distribution exists indeed, but only moments of orders smaller than
$m$ are finite.
\end{example}

\begin{example}
Consider again the set-up of Example \ref{ex:1}. Set now $\alpha
(\{v_{n}\})=(\ln(1+n)^{3}(n+1))^{-1}$.

Then $\int_{{\mathbf{S}}_{\mathbb{R}^{d},\Vert\cdot\Vert_{2}}}\ln\left(
1+\frac{1}{\beta(v)}\right)  \alpha(\mathrm{d}v)=\sum_{n\in\mathbb{N}}\frac
{1}{\ln(1+n)^{2}(1+n)}<\infty$ (see \cite[Theorem 3.29]{Rudin1976} and thus
the $\Gamma_{2}(\alpha,\beta)$ distribution is well-defined.

Yet, $\int_{{\mathbf{S}}_{\mathbb{R}^{d},\Vert\cdot\Vert_{2}}}\beta
(v)^{-k}\alpha(\mathrm{d}v)=\sum_{n\in\mathbb{N}}\frac{n^{k}}{\ln
(1+n)^{3}(1+n)}=\infty$ for all real $k>0$ and so the $\Gamma_{2}(\alpha
,\beta)$ distribution has no finite moments of positive orders at all.
\end{example}

\section{Gamma and Generalised Gamma Convolutions on Cones}

\label{sec:ggc}

\subsection{Cone-valued infinitely divisible random elements}

We first review several facts about infinitely divisible elements with values
in a cone of a finite dimensional Euclidean space $B$ with norm $\left\Vert
\cdot\right\Vert $ and inner product $\left\langle \cdot,\cdot\right\rangle .$
A nonempty convex set $K$ of $B$ is said to be a \textit{cone} if $\lambda
\geq0$ and $x\in K$ imply $\lambda x\in K.$ A cone is \textit{proper }if $x=0$
whenever $x\,$and $-x\,$are in $K$. The \textit{dual cone} $K^{\prime}$ of $K$
is defined as $K^{\prime}=\left\{  y\in B^{\prime}:\left\langle
y,s\right\rangle \geq0\text{ for every }s\in K\right\}  .$ A proper cone $K$
induces a partial order on $B$ by defining $x_{1}\leq_{K}x_{2}$ whenever
$x_{2}-x_{1}\in K$ for $x_{1}\in B$ and $x_{2}\in B$. \ Examples of proper
cones are $\mathbb{R}_{+}$, $\mathbb{R}_{+}^{d}=[0,\infty)^{d}$,
$\mathbb{S}_{d}^{+}$ and $\overline{\mathbb{S}}_{d}^{+}$. $\ $

A random element $X$ in $K$ is \emph{infinitely divisible (ID)} if and only if
for each integer $p\geq1$ there exist $p$ independent identically distributed
random elements $X_{1},...,X_{p}$ in $K$ such that $X\overset{law}{=}%
X_{1}+...+X_{p}.$ A probability measure $\mu$ on $K$ is ID if it is the
distribution of an ID element in $K.$ It is known (see \cite{S91}) that such a
distribution $\mu$ is concentrated on a cone $K$ if and only if its Laplace
transform $\emph{L}_{\mu}(\Theta)=\int_{K}\exp(-\left<  \Theta,x\right>
)\mu(\mathrm{d}x)$ is given by the \emph{regular Lévy-Khintchine
representation}
\begin{equation}
\emph{L}_{\mu}(\Theta)=\exp\left\{  -\left\langle \Theta,\Psi_{0}\right\rangle
-\int_{K}\left(  1-e^{-\left\langle \Theta,x\right\rangle }\right)  \nu_{\mu
}(\mathrm{d}x)\right\}  \text{\quad for all }\Theta\in K^{\prime},
\label{gllkr}%
\end{equation}
where $\Psi_{0}\in K$ and the Lévy measure is such that $\nu_{\mu}(K^{c})=0$
and
\begin{equation}
\int_{K}(\left\Vert x\right\Vert \wedge1)\,\nu_{\mu}(\mathrm{d}x)<\infty.
\label{sL1}%
\end{equation}

If $X=\left\{  X(t);t\geq0\right\}  $ is the $K$-increasing Lévy process
($K$-valued subordinator) associated to $\mu$, its Lévy-Itô decomposition is
of the form
\begin{align}
X(t)  &  =t\Psi_{0}+\int_{0}^{t}\int_{K}xN(\mathrm{d}t,\mathrm{d}x)\nonumber\\
&  =t\Psi_{0}+\sum_{s\leq t}\Delta X(s)\quad a.s., \label{itorep}%
\end{align}
where $\Delta X(s)\in K$ for all $s\geq0$ a.s. and $N(dt,\mathrm{d}x)$ is a
Poisson random measure on $\mathbb{R}_{+}\times K$\ with%
\begin{equation}
\mathbb{E}\{N(\mathrm{d}t,\mathrm{d}x)\}=\nu_{\mu}(\mathrm{d}x)\mathrm{d}t.
\label{ExpPRM}%
\end{equation}

\subsection{Cone-valued Gamma distributions}

Let $\mathbf{S}_{\left\Vert \cdot\right\Vert }$ be the unit sphere of $B$ with
respect to the norm $\left\Vert \cdot\right\Vert $ and let $K$ be a proper
cone of $B.$ We write $\mathbf{S}_{\left\Vert \cdot\right\Vert }%
^{K}=\mathbf{S}_{\left\Vert \cdot\right\Vert }\cap K$ and denote by
$\mathcal{B}(\mathbf{S}_{\left\Vert \cdot\right\Vert }^{K})$ the Borel sets of
$\mathbf{S}_{\left\Vert \cdot\right\Vert }^{K}$,

Similar to Definition \ref{def:mgamma} we have Gamma distributions in the cone
$K.$

\begin{definition}
\label{def:gammaconvalued} Let $\mu$ be an infinitely divisible distribution
on the cone $K$. If there exist a finite measure $\alpha$ on $\mathbf{S}%
_{\left\Vert \cdot\right\Vert }^{K},$ and a measurable function $\beta
:\mathbf{S}^{K}_{\left\Vert \cdot\right\Vert }\rightarrow\mathbb{R}_{+}$ such
that
\begin{equation}
\emph{L}_{\mu}(\Theta)=\exp\left\{  -\int_{\mathbf{S}_{\left\Vert
\cdot\right\Vert }^{K}}\int_{0}^{\infty}\left(  1-e^{-r\left\langle
\Theta,U\right\rangle }\right)  \frac{e^{-\beta(U)r}}{r}\mathrm{d}%
r\alpha(\mathrm{d}U)\right\}  \label{LTMX}%
\end{equation}
for all $\Theta\in K^{\prime},$ then $\mu$ is called a $K$-Gamma distribution
with parameters $\alpha$ and $\beta$, and we write $\mu\sim\Gamma_{K}%
(\alpha,\beta)$. $\ $The Lévy measure $\nu_{\mu}$ of $\mu$ is
\begin{equation}
\nu_{\mu}(E)=\int_{\mathbf{S}_{\left\Vert \cdot\right\Vert }^{K}}\int
_{0}^{\infty}1_{E}(rU)\frac{e^{-\beta(U)r}}{r}\mathrm{d}r\alpha(\mathrm{d}%
U),\quad E\in\mathcal{B}(K), \label{LevMeaMatPol}%
\end{equation}
and satisfies
\begin{equation}
\int_{K}\min(1,\left\Vert x\right\Vert )\nu_{\mu}(\mathrm{d}x)<\infty.
\label{suL1}%
\end{equation}

\end{definition}

The expression (\ref{LevMeaMatPol}) is equivalent to
\begin{equation}
\nu_{\mu}(\mathrm{d}X)=\frac{e^{-\beta(X/\left\Vert X\right\Vert )\left\Vert
X\right\Vert }}{\left\Vert X\right\Vert }1_{K}(X)\widetilde{\alpha}%
(\mathrm{d}X), \label{EquivLevMeasMeas}%
\end{equation}
where $\widetilde{\alpha}$ is a measure on $K$ given by
\begin{equation}
\widetilde{\alpha}(E)=\int_{\mathbf{S}_{\left\Vert \cdot\right\Vert }^{K}}%
\int_{0}^{\infty}1_{E}(rU)\mathrm{d}r\alpha(\mathrm{d}U),\quad E\in
\mathcal{B}(K). \label{LVGK2}%
\end{equation}

All properties of the multivariate Gamma distribution in Section
\ref{sec:MGammadist} are also true for the cone-valued Gamma distribution. As
in Proposition 3.3 we can in particular show that there exists a $\Gamma
_{K}(\alpha,\beta)$ probability measure $\mu$ if and only if
\begin{equation}
\int_{\mathbf{S}_{\left\Vert \cdot\right\Vert }^{K}}\ln\left(  1+\frac
{1}{\beta(U)}\right)  \alpha(\mathrm{d}U)<\infty, \label{CondExistMG}%
\end{equation}
in which case we have (\ref{suL1}). Also, as for Proposition 3.10 the Laplace
transform of a $\Gamma_{K}(\alpha,\beta)$ probability measure $\mu$ is also
given by
\begin{equation}
\emph{L}_{\mu}(\Theta)=\exp\left\{  -\int_{\mathbf{S}_{\left\Vert
\cdot\right\Vert }^{K}}\ln\left(  1+\frac{\left\langle \Theta,U\right\rangle
)}{\beta(U)}\right)  \alpha(\mathrm{d}U)\right\}  ,\quad\Theta\in K^{\prime}.
\label{AltLTMat}%
\end{equation}

If $\left\Vert \cdot\right\Vert _{b}$ is another norm on $K$ and if $\mu$ has
distribution $\Gamma_{K}(\alpha,\beta)$, then $\mu$ has distribution
$\Gamma_{K}(\alpha_{b},\beta_{b})$ where $\alpha_{b},\beta_{b}$ are given as
in \eqref{eq:anc} and \eqref{eq:bnc} respectively. Also, $\Gamma_{K}%
(\alpha,\beta)$, has a finite moment of order $k>0,$ if and only if
\begin{equation}
\int_{\mathbf{S}_{\left\Vert \cdot\right\Vert }^{K}}\beta(U)^{-k}%
\alpha(\mathrm{d}U)<\infty. \label{MomConMat}%
\end{equation}
In the homogeneous case, i.e. $\beta(U)=\beta_{0}>0$ for any \ $U\in\mathbf{S}%
_{\left\Vert \cdot\right\Vert }^{K},$  we have $\mathbb{E}\left\Vert M\right\Vert
^{k}<\infty$ for any $k>0.$

If $M$ is a random element in $K$ with distribution $\Gamma_{K}(\alpha,\beta)$
and (\ref{MomConMat}) is satisfied with $k=1,$
\begin{equation}
\mathbb{E}(M)=\int_{\mathbf{S}_{\left\Vert \cdot\right\Vert }^{K}}%
\beta(U)^{-1}U\alpha(\mathrm{d}U). \label{FirstMomtGM}%
\end{equation}

\subsection{Itô-Wiener-Gamma integrals}

In this section we formulate an Itô-Wiener-Gamma integral for $K$-valued Gamma
process, similar to the Itô-Wiener-Gamma integral (\ref{WinGamInt}) with
respect to the one-dimensional Gamma process.

Let $\gamma=\gamma(\alpha,\beta)=\left(  \gamma_{t};t\geq0\right)  $ be a
$K$-valued Gamma process. That is, $\gamma$ is the $K$-increasing Lévy process
such that $\mu_{\alpha,\beta}=\Gamma_{K}(\alpha,\beta)$ is the distribution of
$\gamma_{1}.$ Let $N_{\gamma}(\mathrm{d}s,\mathrm{d}x)$ be the random measure
on $\mathbb{R}_{+}\times K$ associated to the $K$-valued jumps of $\gamma$ and
$\nu_{\mu_{\alpha,\beta}}$ be the Lévy measure of $\gamma_{1}$. Hence
$\mathbb{E}\{N(\mathrm{d}t,\mathrm{d}x)\}=\nu_{\mu_{\alpha,\beta}}%
(\mathrm{d}x)\mathrm{d}t$, where
\[
\nu_{\mu_{\alpha,\beta}}(E)=\int_{\mathbf{S}_{\left\Vert \cdot\right\Vert
}^{K}}\int_{0}^{\infty}1_{E}(rU)\frac{e^{-\beta(U)r}}{r}\mathrm{d}%
r\alpha(\mathrm{d}U),\quad E\in\mathcal{B}(K).
\]

Let $h:\mathbb{R}_{+}\times\mathbf{S}_{\left\Vert \cdot\right\Vert }%
^{K}\rightarrow\mathbb{R}_{+}$ be a measurable function such that
\begin{equation}
\int_{{\mathbf{S}}_{\Vert\cdot\Vert}^{K}}\int_{0}^{\infty}\ln\left(
1+\frac{h(s,U)}{\beta(U)}\right)  \alpha(\mathrm{d}U)\mathrm{d}s<\infty,
\label{IntPRM}%
\end{equation}
in which case we say that $h$ belongs to $L(\Gamma_{K}(\alpha,\beta))$. The
last condition is the cone analogon of the one-dimensional condition
(\ref{intcondh}).

We prove in the next proposition that the following Itô-Wiener-Gamma integral
type is well defined
\begin{equation}
Y^{h}=\int_{0}^{\infty}\int_{K}h\left(  s,\frac{x}{\left\Vert x\right\Vert
}\right)  xN(\mathrm{d}s,\mathrm{d}x), \label{wginrep}%
\end{equation}
in the the framework of integration with respect to infinitely divisible
independently scattered random measures (i.d.i.s.r.m.) in Rajput and Rosinski
\cite{RaRo89} (see also \cite{BnSe09b} for the special case of random matrices).

\begin{proposition}
\label{ExitInt1} The integral \eqref{wginrep} is well defined if and only if
the function $h:\mathbb{R}_{+}\times\mathbf{S}_{\left\Vert \cdot\right\Vert
}^{K}\rightarrow\mathbb{R}_{+}$ belongs to $L(\Gamma_{K}(\alpha,\beta))$.

Moreover, $h\in L(\Gamma_{K}(\alpha,\beta))$ if and only if
\begin{equation}
\int_{0}^{\infty}\int_{K}\min(1,\left\Vert h(s,x/\left\Vert x\right\Vert
)x\right\Vert )\nu_{\mu_{\alpha,\beta}}(\mathrm{d}x)\mathrm{d}s<\infty,
\label{CondLebMeah}%
\end{equation}
and (\ref{CondLebMeah}) is equivalent to the following two conditions
\begin{equation}
\int_{{\mathbf{S}}_{\Vert\cdot\Vert}^{K}}\int_{0}^{1/2}\left\vert \ln\left(
z\right)  \right\vert G_{U}(\mathrm{d}z)\alpha(\mathrm{d}U)<\infty
\label{CondLebMeaha}%
\end{equation}
and%
\begin{equation}
\int_{{\mathbf{S}}_{\Vert\cdot\Vert}^{K}}\int_{1/2}^{\infty}\frac{1}{z}%
G_{U}(\mathrm{d}z)\alpha(\mathrm{d}U)<\infty\label{CondLebMeahb}%
\end{equation}
where $G_{U}(\mathrm{d}z)$ is the \textit{measure} on $\mathbb{R}_{+}$ which
is the image of the Lebesgue measure on $\mathbb{R}_{+}$ under the change of
variable $s\rightarrow\beta(U)/h(s,U).$
\end{proposition}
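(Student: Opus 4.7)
The plan is to establish the three equivalences in sequence: \eqref{wginrep} is well defined iff \eqref{CondLebMeah} holds; \eqref{CondLebMeah} is equivalent to \eqref{IntPRM} via a Frullani-type computation; and \eqref{IntPRM} is equivalent to the conjunction of \eqref{CondLebMeaha} and \eqref{CondLebMeahb} via the image measure $G_U$.

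For the first equivalence I would invoke the Rajput--Rosi\'nski \cite{RaRo89} theory of integration with respect to an infinitely divisible independently scattered random measure. The controlling measure of $N_\gamma$ is $\nu_{\mu_{\alpha,\beta}}(\mathrm{d}x)\,\mathrm{d}s$, the integrand $(s,x)\mapsto h(s,x/\|x\|)x$ takes values in the proper cone $K$, and $\gamma$ is $K$-increasing with no Gaussian part. Thus the three Rajput--Rosi\'nski integrability conditions collapse to the single subordinator-type criterion
\[
\int_0^\infty \int_K \min\bigl(1,\|h(s,x/\|x\|)x\|\bigr)\, \nu_{\mu_{\alpha,\beta}}(\mathrm{d}x)\,\mathrm{d}s < \infty,
\]
which is precisely \eqref{CondLebMeah}. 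The main technical point here is to verify that the drift correction appearing in the general Rajput--Rosi\'nski formula imposes no additional constraint in this cone-valued positive-jump setting.

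To pass from \eqref{CondLebMeah} to \eqref{IntPRM} I would put the integral into polar form using \eqref{LevMeaMatPol}: after substituting $x = rU$ with $\|U\|=1$ the left-hand side of \eqref{CondLebMeah} becomes
\[
\int_0^\infty \int_{\mathbf{S}^K_{\left\Vert\cdot\right\Vert}} \int_0^\infty \min\bigl(1, h(s,U)r\bigr)\, \frac{e^{-\beta(U)r}}{r}\,\mathrm{d}r\,\alpha(\mathrm{d}U)\,\mathrm{d}s.
\]
Using the elementary two-sided bound $\tfrac{1}{2}(1\wedge ar)\leq 1-e^{-ar}\leq 1\wedge ar$ valid for $a,r\geq 0$, the inner $r$-integral is comparable, up to universal multiplicative constants, to $\int_0^\infty (1-e^{-h(s,U)r})\frac{e^{-\beta(U)r}}{r}\mathrm{d}r$, which the standard Frullani identity evaluates to $\ln(1+h(s,U)/\beta(U))$. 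Fubini then yields the equivalence with \eqref{IntPRM}.

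For the last step, for each fixed $U$ the change of variable $z = \beta(U)/h(s,U)$ transforms $\int_0^\infty \ln(1+h(s,U)/\beta(U))\,\mathrm{d}s$ into $\int_0^\infty \ln(1+1/z)\,G_U(\mathrm{d}z)$. Splitting at $z=1/2$ and exploiting the elementary asymptotics $\ln(1+1/z)\asymp |\ln z|$ as $z\downarrow 0$, $\ln(1+1/z)\asymp 1/z$ as $z\to\infty$, together with the boundedness of $\ln(1+1/z)$ on any compact subinterval of $(0,\infty)$, then yields the equivalence of \eqref{IntPRM} with the pair \eqref{CondLebMeaha}--\eqref{CondLebMeahb} after integrating against $\alpha(\mathrm{d}U)$. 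These asymptotic manipulations closely parallel those in the proof of Proposition \ref{ExitMulGam}; the only genuinely new ingredient is the Rajput--Rosi\'nski reduction in the first step, which I expect to be the main obstacle.
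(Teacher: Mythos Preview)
Your proof is correct and follows the same overall architecture as the paper: first invoke Rajput--Rosi\'nski to reduce well-definedness to \eqref{CondLebMeah}, then establish the equivalence with \eqref{IntPRM}, then pass to the $G_U$ conditions via the asymptotics of $\ln(1+1/z)$. The first and last steps are essentially identical to the paper's.

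The middle step is where you genuinely diverge. The paper splits $\min(1,h(s,U)r)$ at $r=1/h(s,U)$, computes the two resulting $r$-integrals explicitly as $\frac{h}{\beta}(1-e^{-\beta/h})$ and $E_1(\beta/h)$, pushes both through the change of variable $z=\beta/h$, and then separately controls the pieces $\int (1-e^{-z})/z\,G_U(\mathrm{d}z)$ and $\int E_1(z)\,G_U(\mathrm{d}z)$ via the known bounds $0\le E_1(z)\le e^{-z}\ln(1+1/z)$ and $E_1(z)/(-\ln z)\to 1$ as $z\downarrow 0$. Your route---replacing $\min(1,ar)$ by $1-e^{-ar}$ via the two-sided bound $\tfrac12(1\wedge x)\le 1-e^{-x}\le 1\wedge x$ and then evaluating the resulting $r$-integral directly by Frullani as $\ln(1+h/\beta)$---is shorter and avoids the exponential integral altogether. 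What the paper's decomposition buys is the explicit formula \eqref{Imp}, which is reused verbatim in the proof of Theorem~\ref{WGCGGCK}; your comparison argument gives only two-sided estimates, so if you later need the exact identity you would have to redo that computation.
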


\begin{proof}
By \cite{RaRo89,BnSe09b} the existence of the integral is equivalent to \eqref{CondLebMeah}.

Since $\beta(U)<0$ $a.e$. $U$, Fubini's theorem and elementary computations
give
\begin{align}
I  &  =\int_{0}^{\infty}\int_{K}\min(1,\left\Vert h(s,x/\left\Vert
x\right\Vert )x\right\Vert )\nu_{\mu_{\alpha,\beta}}(\mathrm{d}x)\mathrm{d}s\\
&  =\int_{{\mathbf{S}}_{\Vert\cdot\Vert}^{K}}\int_{0}^{\infty}\int_{0}%
^{\infty}\min(1,\left\Vert h(s,U)rU\right\Vert )\frac{e^{-r\beta(U)}}%
{r}\mathrm{d}r\alpha(\mathrm{d}U)\mathrm{d}s\nonumber\\
&  =\int_{{\mathbf{S}}_{\Vert\cdot\Vert}^{K}}\int_{0}^{\infty}\int
_{0}^{1/h(s,U)}h(s,U)e^{-r\beta(U)}\mathrm{d}r\alpha(\mathrm{d}U)\nonumber\\
&  +\int_{{\mathbf{S}}_{\Vert\cdot\Vert}^{K}}\int_{0}^{\infty}\int
_{1/h(s,U)}^{\infty}\frac{e^{-r\beta(U)}}{r}\mathrm{d}r\alpha(\mathrm{d}%
U)\nonumber\\
&  \overset{z=r\beta(U)}{=}\int_{{\mathbf{S}}_{\Vert\cdot\Vert}^{K}}\int
_{0}^{\infty}\frac{h(s,U)}{\beta(U)}\left(  1-e^{-r\beta(U)/h(s,U)}\right)
\mathrm{d}s\alpha(\mathrm{d}U)\nonumber\\
&  +\int_{{\mathbf{S}}_{\Vert\cdot\Vert}^{K}}\int_{0}^{\infty}E_{1}\left(
\frac{h(s,U)}{\beta(U)}\right)  \mathrm{d}s\alpha(\mathrm{d}U) \label{Imp}%
\end{align}
where $E_{1}$ is the exponential integral function as in the proof of
Proposition \ref{ExitMulGam}.

Using the change of variable $z=\beta(U)/h(s,U)$ we have%

\begin{align}
I  &  =\int_{{\mathbf{S}}_{\Vert\cdot\Vert}^{K}}\int_{0}^{\infty}%
\frac{1-e^{-z}}{z}G_{U}(\mathrm{d}z)\alpha(\mathrm{d}U)\nonumber\\
&  +\int_{{\mathbf{S}}_{\Vert\cdot\Vert}^{K}}\int_{0}^{\infty}E_{1}\left(
z\right)  G_{U}(\mathrm{d}z)\mathrm{d}s\alpha(\mathrm{d}U)=I_{1}+I_{2}
\label{ca0}%
\end{align}
We shall in the following show that $I<\infty$ if and only if
\begin{equation}
I_{3}=\int_{{\mathbf{S}}_{\Vert\cdot\Vert}^{K}}\int_{0}^{\infty}\ln\left(
1+\frac{1}{z}\right)  G_{U}(\mathrm{d}z)\alpha(\mathrm{d}U)<\infty\label{ca1}%
\end{equation}
if and only if (\ref{CondLebMeaha}) and (\ref{CondLebMeahb}) are satisfied.
This concludes, as obviously \eqref{ca1} and $h\in L(\Gamma_{K}(\alpha
,\beta))$ are equivalent.

First,
\begin{equation}
\int_{{\mathbf{S}}_{\Vert\cdot\Vert}^{K}}\int_{1/2}^{\infty}\frac{1-e^{-z}}%
{z}G_{U}(\mathrm{d}z)\alpha(\mathrm{d}U)<\infty\label{ca3}%
\end{equation}
if and only if (\ref{CondLebMeahb}), if and only if
\[
I_{4}=\int_{{\mathbf{S}}_{\Vert\cdot\Vert}^{K}}\int_{1/2}^{\infty}\ln\left(
1+\frac{1}{z}\right)  G_{U}(\mathrm{d}z)\alpha(\mathrm{d}U)<\infty
\]
since $(1/z)/\ln(1+z^{-1})\rightarrow1$ as $z\rightarrow\infty.$

On the other hand,
\[
\int_{{\mathbf{S}}_{\Vert\cdot\Vert}^{K}}\int_{0}^{1/2}E_{1}\left(  z\right)
G_{U}(\mathrm{d}z)\alpha(\mathrm{d}U)<\infty
\]
if and only if (\ref{CondLebMeaha}) holds (since $E_{1}(z)/(-\ln
(z))\rightarrow1$ as $z\rightarrow0$) if and only if
\[
I_{5}=\int_{{\mathbf{S}}_{\Vert\cdot\Vert}^{K}}\int_{0}^{1/2}\ln\left(
1+\frac{1}{z}\right)  G_{U}(\mathrm{d}z)\alpha(\mathrm{d}U)<\infty
\]
because $\ln(1+z^{-1})/(-\ln(z))\rightarrow1$ as $z\rightarrow0$.

$I_{5}<\infty$ and \eqref{CondLebMeaha} both imply
\[
\int_{{\mathbf{S}}_{\Vert\cdot\Vert}^{K}}\int_{0}^{1/2}G_{U}(\mathrm{d}%
z)\alpha(\mathrm{d}U)<\infty.
\]
Thus%
\[
\int_{{\mathbf{S}}_{\Vert\cdot\Vert}^{K}}\int_{0}^{1/2}\frac{1-e^{-z}}{z}%
G_{U}(\mathrm{d}z)\alpha(\mathrm{d}U)\leq\int_{{\mathbf{S}}_{\Vert\cdot\Vert
}^{K}}\int_{0}^{1/2}G_{U}(\mathrm{d}z)\alpha(\mathrm{d}U)<\infty,
\]
provided $I_{5}<\infty$ or \eqref{CondLebMeaha} hold.

Since $0\leq E_{1}(z)\leq e^{-z}\ln(1+1/z)\,\forall\,z\in\mathbb{R}_{+}$ (see
\cite[p. 229]{AbramowitzStegun}), using (\ref{ca3}) implies
\begin{align*}
&  \int_{{\mathbf{S}}_{\Vert\cdot\Vert}^{K}}\int_{1/2}^{\infty}E_{1}\left(
z\right)  G_{U}(\mathrm{d}z)\mathrm{d}s\alpha(\mathrm{d}U)\\
&  \leq\int_{{\mathbf{S}}_{\Vert\cdot\Vert}^{K}}\int_{1/2}^{\infty}e^{-z}%
\ln\left(  1+\frac{1}{z}\right)  G_{U}(\mathrm{d}z)\alpha(\mathrm{d}U)\leq
e^{-1/2} I_{4}.
\end{align*}

\end{proof}

\begin{proposition}
\label{ExitInt2} Let $h\in L(\Gamma_{K}(\alpha,\beta))$. Then the distribution
of the $K$-valued random variable $Y^{h}$ is infinitely divisible and has
Laplace transform
\begin{align}
\emph{L}_{Y^{h}}(\Theta)  &  =\exp\left(  -\int_{{\mathbf{S}}_{\Vert\cdot
\Vert}^{K}}\int_{0}^{\infty}\int_{0}^{\infty}\left(  1-e^{-rh(t,U)\left<
\Theta,U\right>  }\right)  \frac{e^{-\beta(U)r}}{r}\mathrm{d}t\mathrm{d}%
r\alpha(\mathrm{d}U)\right) \label{LTYh1}\\
&  =\exp(-\int_{{\mathbf{S}}_{\Vert\cdot\Vert}^{K}}\int_{0}^{\infty}\ln\left(
1+\frac{\left<  \Theta,U\right>  }{z}\right)  G_{U}(\mathrm{d}z)\alpha
(\mathrm{d}U).
\end{align}
where for $\alpha$-$a.e$. $U,$ $G_{U}$ is a Thorin measure \textit{measure} on
$\mathbb{R}_{+}$ which is the image of Lebesgue measure on $\mathbb{R}_{+}$
under the change of variable $s\rightarrow\beta(U)/h(s,U).$ Moreover, the Lévy
measure of $Y^{h}$ is
\begin{equation}
\nu_{Y^{h}}(E)=\int_{{\mathbf{S}}_{\Vert\cdot\Vert}^{K}}\int_{0}^{\infty}%
1_{E}(rU)\frac{k_{U}(r)}{r}\mathrm{d}r\alpha(\mathrm{d}U),\quad E\in
\mathcal{B}(K) \label{LevMeaYh}%
\end{equation}
where
\begin{equation}
k_{U}(r)=\int_{0}^{\infty}e^{-rz}G_{U}(\mathrm{d}z). \label{LevMeaYhk}%
\end{equation}

\end{proposition}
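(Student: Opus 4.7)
The plan is to work within the Rajput--Rosi\'nski stochastic-integration framework for infinitely divisible independently scattered random measures, since the existence of $Y^h$ has already been established in Proposition~\ref{ExitInt1}. In that framework, for a Poisson-type integral against $N(\mathrm{d}s,\mathrm{d}x)$ with mean measure $\nu_{\mu_{\alpha,\beta}}(\mathrm{d}x)\mathrm{d}s$, one has
\[
\mathbb{E}e^{-\langle\Theta,Y^{h}\rangle}=\exp\left(\int_{0}^{\infty}\int_{K}\bigl(e^{-\langle\Theta,h(s,x/\|x\|)x\rangle}-1\bigr)\nu_{\mu_{\alpha,\beta}}(\mathrm{d}x)\mathrm{d}s\right),
\]
for $\Theta\in K'$, which in particular shows that $Y^{h}$ is infinitely divisible with no Gaussian part and zero drift.

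Next I would insert the polar representation \eqref{LevMeaMatPol} of $\nu_{\mu_{\alpha,\beta}}$, i.e.\ $x=rU$ with $U\in\mathbf{S}^{K}_{\|\cdot\|}$ and $r>0$. Observing that $h(s,rU/\|rU\|)=h(s,U)$ and $\langle\Theta,rU\rangle=r\langle\Theta,U\rangle$, this yields exactly the first claimed expression \eqref{LTYh1}. For the second, I would apply the elementary identity
\[
\int_{0}^{\infty}(1-e^{-rw})\frac{e^{-zr}}{r}\mathrm{d}r=\ln\left(1+\frac{w}{z}\right),\qquad w,z>0,
\]
with $w=h(s,U)\langle\Theta,U\rangle$ and $z=\beta(U)$, and then perform the change of variable $s\mapsto z=\beta(U)/h(s,U)$ for each fixed $U$ (this is where $G_{U}$ enters as the pushforward of Lebesgue measure), so the inner $s$-integral becomes $\int_{0}^{\infty}\ln(1+\langle\Theta,U\rangle/z)\,G_{U}(\mathrm{d}z)$. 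The finiteness of all integrals appearing in these manipulations is guaranteed by $h\in L(\Gamma_{K}(\alpha,\beta))$ via Proposition~\ref{ExitInt1} (since $\langle\Theta,U\rangle$ is bounded on the compact sphere for fixed $\Theta$), so Fubini is legitimate throughout.

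For the L\'evy measure, I would reverse the identity once more inside the second Laplace-transform formula: writing $\ln(1+\langle\Theta,U\rangle/z)=\int_{0}^{\infty}(1-e^{-r\langle\Theta,U\rangle})e^{-zr}r^{-1}\mathrm{d}r$ and interchanging the order of integration in $z$ and $r$ produces
\[
\int_{0}^{\infty}\bigl(1-e^{-r\langle\Theta,U\rangle}\bigr)\frac{k_{U}(r)}{r}\mathrm{d}r,\qquad k_{U}(r)=\int_{0}^{\infty}e^{-rz}G_{U}(\mathrm{d}z),
\]
so the overall Laplace exponent becomes $-\int_{\mathbf{S}^{K}_{\|\cdot\|}}\int_{0}^{\infty}(1-e^{-r\langle\Theta,U\rangle})r^{-1}k_{U}(r)\mathrm{d}r\,\alpha(\mathrm{d}U)$. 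Comparing this with the regular L\'evy--Khintchine representation \eqref{gllkr} for cone-valued ID laws identifies \eqref{LevMeaYh}--\eqref{LevMeaYhk} by uniqueness of the L\'evy measure.

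I expect the main obstacle to be the bookkeeping in the two successive applications of Fubini's theorem and the change of variables $z=\beta(U)/h(s,U)$: one has to be careful that $h(s,U)>0$ (otherwise the change of variable degenerates) and that the tail contributions at $z\downarrow 0$ and $z\uparrow\infty$ are controlled, but both are already handled by the equivalent conditions \eqref{CondLebMeaha}--\eqref{CondLebMeahb} established in Proposition~\ref{ExitInt1}. Once Fubini is justified, the rest is the classical $\int_{0}^{\infty}(1-e^{-rw})e^{-zr}r^{-1}\mathrm{d}r=\ln(1+w/z)$ identity applied in both directions.
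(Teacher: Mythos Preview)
Your proposal is correct and follows essentially the same approach as the paper: start from the Rajput--Rosi\'nski Laplace-transform formula, insert the polar form of $\nu_{\mu_{\alpha,\beta}}$, and pass to $G_U$ via the change of variable $s\mapsto\beta(U)/h(s,U)$. The only cosmetic difference is the order of operations: the paper first substitutes $r\mapsto r\,h(s,U)$ in the radial integral (keeping the integral form throughout) and then reads off $k_U$ directly after introducing $G_U$, whereas you first collapse the $r$-integral via the $\ln(1+w/z)$ identity and then re-expand it to identify the L\'evy measure; both routes are trivially equivalent.
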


\begin{proof}
Using the obvious analogue for the Laplace transform of the formulae for the
characteristic functions of the integrals with respect to i.d.i.s.r.m.s in
\cite{RaRo89,BnSe09b} we obtain
\begin{align*}
\emph{L}_{Y^{h}}(\Theta)  &  =\exp\left(  -\int_{K}\int_{0}^{\infty}\left(
1-e^{-\left<  \Theta,h(s,\frac{x}{\left\Vert x\right\Vert } )x\right>
}\right)  \nu_{\mu_{\alpha,\beta}}(\mathrm{d}x)\mathrm{d}s\right) \\
&  =\exp\left(  \int_{0}^{\infty}\int_{{\mathbf{S}}_{\Vert\cdot\Vert}^{K}}%
\int_{0}^{\infty}\left(  1-e^{-rh(s,U)\left<  \Theta,U\right>  }\right)
\frac{e^{-r\beta(U)}}{r}\mathrm{d}r\alpha(\mathrm{d}U)\mathrm{d}s\right)  .
\end{align*}
As in the last proposition, let $G_{U}(\mathrm{d}z)$ be the \textit{measure}
on $\mathbb{R}_{+}$ which is the image of the Lebesgue measure on
$\mathbb{R}_{+}$ under the change of variable $s\rightarrow\beta
(U)/h(s,U)=z(U)$. Then%
\begin{align*}
\emph{L}_{Y^{h}}(\Theta)  &  =\exp\left(  \int_{0}^{\infty}\int_{{\mathbf{S}%
}_{\Vert\cdot\Vert}^{K}}\int_{0}^{\infty}\left(  1-e^{-r\left<  \Theta
,U\right>  }\right)  \frac{e^{-r\beta(U)/h(s,U)}}{r}\mathrm{d}r\alpha
(\mathrm{d}U)\mathrm{d}s\right) \\
&  =\exp\left(  \int_{{\mathbf{S}}_{\Vert\cdot\Vert}^{K}} \int_{0}^{\infty
}\int_{0}^{\infty}\left(  1-e^{-r\left<  \Theta,U\right>  }\right)
\frac{e^{-rz}}{r}G_{U}(dz)\mathrm{d}r\alpha(\mathrm{d}U)\right) \\
&  =\exp\left(  \int_{{\mathbf{S}}_{\Vert\cdot\Vert}^{K}} \int_{0}^{\infty
}\left(  1-e^{-r\left<  \Theta,U\right>  }\right)  \frac{k_{U}(r)}%
{r}\mathrm{d}r\alpha(\mathrm{d}U)\right)  .
\end{align*}
Hence, combining (\ref{tmGGC}) with the existence conditions for the integral,
$G_{U}(\mathrm{d}t)$ is a Thorin measure on $\mathbb{R}_{+}$ for $\alpha
$-$a.e$. $U$.
\end{proof}

\subsection{Characterisation of Cone Valued GGC}

In this section we define Generalized Gamma Convolutions $GGC$($K$) in the
cone $K$ and characterize this class as the distributions of the $K$-valued
random elements represented by the stochastic integral (\ref{wginrep}). The
result is an extension to the cone valued case of the Wiener-Gamma integral
representation of one-dimensional generalised Gamma convolutions, see Section
1.2 in \cite{JRY08}.

Similar to the multivariate case (see \cite{BMS06}), we define $GGC$($K$) as follows

\begin{definition}
The class $GGC(K)$ is the collection of all infinitely divisible distributions on
$K$ with Lévy measure $v_{\mu}$ having a polar decomposition
\begin{equation}
\nu_{\mu}(E)=\int_{{\mathbf{S}}_{\Vert\cdot\Vert}^{K}}\int_{0}^{\infty}%
1_{E}(rU)\frac{k_{U}(r)}{r}\mathrm{d}r\alpha(\mathrm{d}U),\quad E\in
\mathcal{B}(K), \label{LVGGCK}%
\end{equation}
where $k_{U}(r)$ is a measurable function in $U$ and completely monotone in
$r$ for $\alpha$-a.e. $U.$ 
\end{definition}
A probabilistic interpretation of the class $GGC(K)$
is provided by Theorem \ref{ERVCGGCK} below.

Proposition \ref{ExitInt2} says that the class of distributions of the
Wiener-Gamma integrals $Y_{h}$, $h\in L(\Gamma_{K}(\alpha,\beta))$, are $GGC$%
($K$). 

We now prove that all distributions in $GGC$($K$) have a Wiener-Gamma integral representation.
For simplicity we consider the case without drift, that is
$\Psi_{0}=0$ in (\ref{gllkr}) and (\ref{itorep}). Otherwise%
\[
Y^{h}=\Psi_{\mathrm{0}}+\int_{0}^{\infty}\int_{K}h\left(s,\frac{x}{\left\Vert
x\right\Vert }\right)xN(\mathrm{d}s,\mathrm{d}x).
\]

\begin{theorem}[Wiener-Gamma characterization of $GGC$($K$)]
\label{WGCGGCK} For any fixed
Borel-meas\-ur\-able function $\beta: \mathbf{S}^{K}_{\|\cdot\|}\to\mathbb{R}_{+}$
bounded away from zero it holds that
\begin{align*}
&\left\{  Y^{h}=\int_{0}^{\infty}\int_{K}h(s,x/\|x\|)xN(\mathrm{d}s,\mathrm{d}%
x):\,\,\alpha\mbox{ a finite measure on } \mathcal{B}(\mathbf{S}^{K}%
_{\|\cdot\|}),h\in L(\Gamma_{K}(\alpha,\beta))\right\} \\&\quad\quad =GGC_0(K)
\end{align*}
with $GGC_0(K)$ denoting all generalized Gamma convolutions on $K$ without drift.
\end{theorem}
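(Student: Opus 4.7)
My plan is to prove both inclusions in the stated equality. For ``$\subseteq$'', given a finite measure $\alpha$ on $\mathcal{B}(\mathbf{S}^K_{\|\cdot\|})$ and $h\in L(\Gamma_K(\alpha,\beta))$, Proposition \ref{ExitInt2} shows that the L\'evy measure $\nu_{Y^h}$ has the polar form (\ref{LevMeaYh}) with radial density $k_U(r)/r$ where $k_U(r)=\int_0^\infty e^{-rz}G_U(\mathrm{d}z)$ is completely monotone in $r$ as the Laplace transform of a positive Radon measure. Hence $Y^h\in GGC_0(K)$.

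For the reverse inclusion, let $\mu\in GGC_0(K)$ have L\'evy measure (\ref{LVGGCK}) with $k_U$ completely monotone in $r$ for $\alpha$-a.e.\ $U$. Bernstein's theorem provides, for each such $U$, a unique positive Radon measure $G_U$ on $[0,\infty)$ with $k_U(r)=\int_0^\infty e^{-rz}G_U(\mathrm{d}z)$. The integrability (\ref{suL1}) of $\nu_\mu$, reinterpreted via the change-of-variables computation carried out in the proof of Proposition \ref{ExitInt1}, forces the Thorin-type conditions (\ref{CondLebMeaha}) and (\ref{CondLebMeahb}) on the family $\{G_U\}_U$.

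Mimicking the one-dimensional quantile recipe $h(s)=1/F^{-1}_{\upsilon_\mu}(s)$ from the univariate GGC theory, I would then set
\[
h(s,U):=\frac{\beta(U)}{F_{G_U}^{-1}(s)},\qquad s>0,
\]
where $F_{G_U}^{-1}$ is the right-continuous generalized inverse of $F_{G_U}(t):=G_U([0,t])$. The standard quantile transform argument ensures that the image of Lebesgue measure on $(0,\infty)$ under $s\mapsto\beta(U)/h(s,U)=F_{G_U}^{-1}(s)$ is precisely $G_U$. By the equivalences of Proposition \ref{ExitInt1}, the already-verified Thorin-type conditions give $h\in L(\Gamma_K(\alpha,\beta))$, and Proposition \ref{ExitInt2} then shows that the corresponding $Y^h$ is infinitely divisible with L\'evy measure equal to $\nu_\mu$; in the driftless case this identifies the distribution of $Y^h$ with $\mu$.

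The principal technical obstacle is the joint measurability of $(s,U)\mapsto h(s,U)$, which reduces to measurable dependence of the Bernstein measure $G_U$ on $U$. I would obtain this from a measurable polar disintegration of $\nu_\mu$, which yields a measurable family of radial measures $\tfrac{k_U(r)}{r}\,\mathrm{d}r$; uniqueness in Bernstein's theorem together with standard inversion (for example, recovering $F_{G_U}(t)$ as a measurable pointwise limit of suitable integrals against $k_U$) produces a measurable map $U\mapsto G_U$ into the space of Radon measures on $[0,\infty)$ with the vague topology, after which joint measurability of $F_{G_U}^{-1}(s)$ in $(s,U)$ is routine. That the class does not depend on the specific choice of $\beta$ (as long as $\beta$ is bounded away from zero, so that the driving Gamma process itself exists via (\ref{CondExistMG})) is transparent from the construction, since replacing $\beta$ by another admissible function simply rescales $h$.
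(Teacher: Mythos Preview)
Your proposal is correct and follows essentially the same route as the paper: both directions rest on Propositions~\ref{ExitInt1} and~\ref{ExitInt2}, and for the harder inclusion you invoke Bernstein's theorem and define $h(s,U)=\beta(U)/F_{G_U}^{-1}(s)$ exactly as the paper does, then feed the L\'evy integrability of $\nu_\mu$ back through the computation in Proposition~\ref{ExitInt1} to obtain $h\in L(\Gamma_K(\alpha,\beta))$. Your discussion of the joint measurability of $(s,U)\mapsto h(s,U)$ is an additional technical point that the paper does not address explicitly.
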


The condition on $\beta$ above is needed to ensure the existence of the Gamma
random variables for all finite measures $\alpha$. The result implies that starting with any fixed homogeneous (or non-homogeneous with $\beta$ bounded away from 0) Gamma distribution one can obtain all generalized Gamma convolutions as the sum of a constant and a Wiener-It\^o integral with respect to the jump measure obtained from this fixed distribution.

\begin{proof}
Let $\mu\in GGC_0(K)$ with Lévy measure given by \ (\ref{LVGGCK}). Since
$k_{U}(r)$ is completely monotone in $r$ for $\alpha$-a.e. $U$, there exists a
Radon measure $G_{U}$ such that $k_{U}(r)=\int_{0}^{\infty}e^{-rz}%
G_{U}(\mathrm{d}z)$. Moreover%
\begin{equation}
\int_{{\mathbf{S}}_{\Vert\cdot\Vert}^{K}}\int_{0}^{\infty}\min(1,r)\frac
{k_{U}(r)}{r}\mathrm{d}r\alpha(\mathrm{d}U)<\infty. \label{WGCGGCK1}%
\end{equation}

Let $F_{G_{U}}(x)=\int_{0}^{x}G_{U}($\textrm{d}$z)$ for $x\geq0$ and
$F_{G_{U}}^{-1}(s)$ be the right continuous generalised inverse of $F_{G_{U}%
}(s)$. Let $\tilde h(s,U)=1/F_{G_{U}}^{-1}(s)$ and $h(s,U)=\beta(U)\tilde h (s,U)$ for $s\geq0$. It follows as in the one
dimensional case that $G_{U\mu}$, $\alpha$-a.e. $U$, is a Thorin measure which
is the image of Lebesgue measure on $(0,\infty)$ under the change of variable
$s\rightarrow1/h(s,U)$. That is,
\[
\int_{0}^{\infty}e^{-\frac{\beta(U)x}{h(s,U)}}\mathrm{d}s=\int_{0}^{\infty}e^{-\frac{x}{\tilde h(s,U)}}\mathrm{d}s=\int_{0}^{\infty}%
e^{-xz}G_{U}(\mathrm{d}z),\quad x>0
\]
and
\begin{align*}
I  &  =\int_{{\mathbf{S}}_{\Vert\cdot\Vert}^{K}}\int_{0}^{\infty}\int
_{0}^{\infty}\frac{\min(1,r)}{r}e^{-rz}G_{U}(\mathrm{d}z)\mathrm{d}r\alpha
(\mathrm{d}U)\\
&  =\int_{{\mathbf{S}}_{\Vert\cdot\Vert}^{K}}\int_{0}^{\infty}\int_{0}^{\infty}\frac{h(s,U)}{\beta(U)}\left(
1-e^{-\beta(U)r/h(s,U)}\right)  dr\mathrm{d}s\alpha(\mathrm{d}U)\\
&  +\int_{{\mathbf{S}}_{\Vert\cdot\Vert}^{K}}\int_{0}^{\infty}E_{1}\left(\frac{
h(s,U)}{\beta(U)}\right)  \mathrm{d}s\alpha(\mathrm{d}U).
\end{align*}
Thus \eqref{WGCGGCK1}, Proposition \ref{ExitInt1} and \eqref{Imp}  imply  $h\in L(\Gamma_{K}(\alpha,\beta))$. Let $N(\mathrm{d}s,\mathrm{d}x)$
be the Poisson random measure associated to $\Gamma_{K}(\alpha,\beta)$ and
$Y^{h}=\int_{0}^{\infty}\int_{K}h(s,x/\left\Vert x\right\Vert )xN(\mathrm{d}%
s,\mathrm{d}x)$. Then Proposition \ref{ExitInt2} shows that $\mu$ is the distribution of $Y^h$ which concludes the proof.
\end{proof}

We also have another characterization of $GGC(K)$, similar to a
characterization of multivariate GGC proved in \cite[Theorem F]{BMS06}. This
gives another probabilistic interpretation of $GGC(K)$.

We call $XV$ an elementary Gamma variable in $K$ if $X$ is a non-random
non-zero vector in $K$ and $V$ is a non-negative real random variable with
one-dimensional Gamma distribution $\Gamma(\alpha,\beta)$.

\begin{theorem}
\label{ERVCGGCK} \ $GGC$($K$) is the smallest class of distributions on $K$
closed under convolution and weak convergence and containing the distributions
of all elementary Gamma variables in $K$.
\end{theorem}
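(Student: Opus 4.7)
Write $\mathcal{E}$ for the smallest class of distributions on $K$ closed under convolution and weak convergence and containing all elementary Gamma variables. The plan is to verify both inclusions $\mathcal{E}\subseteq GGC(K)$ and $GGC(K)\subseteq\mathcal{E}$.

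For $\mathcal{E}\subseteq GGC(K)$ I would proceed in three steps. First, an elementary Gamma $XV$ with $X\in K\setminus\{0\}$ and $V\sim\Gamma(a,b)$ has L\'evy measure supported on the ray through $X$; in the polar coordinates $U=X/\|X\|$, $r=t\|X\|$ this measure takes the form \eqref{LVGGCK} with $\alpha=\delta_U$ and $k_U(r)=a e^{-br/\|X\|}$, which is completely monotone. Second, $GGC(K)$ is closed under convolution: for $\mu_1,\mu_2\in GGC(K)$ take $\alpha=\alpha_1+\alpha_2$ and the Radon--Nikodym densities $f_i=\mathrm{d}\alpha_i/\mathrm{d}\alpha$ to obtain a polar decomposition with $k_U=f_1 k_{U,1}+f_2 k_{U,2}$, still completely monotone, while drifts add trivially. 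Third, for closure under weak convergence, given $\mu_n\in GGC(K)$ with $\mu_n\to\mu$, the limit $\mu$ is infinitely divisible with no Gaussian part and its L\'evy measure $\nu_\mu$ is the weak limit of the $\nu_{\mu_n}$ on sets bounded away from $0$. Using Bernstein's theorem to rewrite each $k_{U,n}$ as the Laplace transform of a Radon measure $G_{U,n}$, I would extract subsequential limits $G_U$ via tightness of the joint measures $G_{U,n}(\mathrm{d}z)\alpha_n(\mathrm{d}U)$ (after truncation on $\mathbf{S}^K_{\|\cdot\|}\times(\delta,\infty)$ for $\delta>0$) and conclude that $\nu_\mu$ has the required polar decomposition with $k_U(r)=\int_0^\infty e^{-rz}G_U(\mathrm{d}z)$ completely monotone.

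For the reverse inclusion $GGC(K)\subseteq\mathcal{E}$, let $\mu\in GGC(K)$. Applying Bernstein's theorem fibrewise, write $k_U(r)=\int_0^\infty e^{-rz}G_U(\mathrm{d}z)$ for $\alpha$-a.e.\ $U$; Fubini then gives
\begin{equation*}
\nu_\mu(E)=\int_{\mathbf{S}^K_{\|\cdot\|}}\int_0^\infty\left(\int_0^\infty 1_E(rU)\frac{e^{-rz}}{r}\,\mathrm{d}r\right)G_U(\mathrm{d}z)\,\alpha(\mathrm{d}U),
\end{equation*}
and the inner bracket is precisely the L\'evy measure of the elementary Gamma $UV$ with $V\sim\Gamma(1,z)$. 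I would then discretise: for each $\varepsilon>0$, truncate to $z\in[\varepsilon,M_\varepsilon]$, partition this slab of $\mathbf{S}^K_{\|\cdot\|}\times\mathbb{R}_+$ into finitely many small Borel cells $B_j$, choose representatives $(U_j,z_j)\in B_j$ and set $c_j=(G_U(\mathrm{d}z)\otimes\alpha(\mathrm{d}U))(B_j)$. The truncated L\'evy measure $\sum_j c_j\int_0^\infty 1_E(rU_j)e^{-rz_j}/r\,\mathrm{d}r$ is that of the convolution of elementary Gammas $U_jV_j$ with $V_j\sim\Gamma(c_j,z_j)$ and thus corresponds to an element of $\mathcal{E}$. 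Refining the partition and letting $\varepsilon\downarrow 0$, $M_\varepsilon\uparrow\infty$, the resulting distributions converge weakly to $\mu$ by \cite[Thm.~8.7]{sato1999}, with the integrability \eqref{suL1} controlling the small-jump contribution uniformly. A drift $\Psi_0\in K$, if present, is the weak limit of elementary Gammas $\Psi_0 W_n$ with $W_n\sim\Gamma(n,n)$ (law of large numbers), so $\delta_{\Psi_0}\in\mathcal{E}$ and can be convolved in.

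The main obstacle is the weak-convergence closure used in the first inclusion: one must control the Bernstein representations $G_{U,n}$ uniformly in $n$ to ensure that the completely monotone structure of the radial factors passes to the limit. This step most closely parallels the multivariate GGC argument of \cite{BMS06} and is what requires care in adapting it to the cone setting; the remaining items reduce to routine measure-theoretic manipulations.
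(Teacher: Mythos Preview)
Your proposal is correct and follows essentially the same route as the paper, which gives no details of its own but simply refers to the proof of Theorem~F in \cite{BMS06}. Your two-inclusion argument---showing $GGC(K)$ is closed under convolution and weak convergence and contains all elementary Gammas, then discretising the Bernstein representation to exhibit any $\mu\in GGC(K)$ as a weak limit of finite convolutions of elementary Gammas---is precisely the strategy used there, carried over to the cone setting.
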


\begin{proof}
The proof is along the same lines to that of Theorem $F$ in \cite[p.
27]{BMS06}.\ 
\end{proof}

This implies also that GGC($K$) is the smallest class of distributions closed
under convolution and weak convergence containing all $K$-valued $\Gamma$
distributions in the sense of Definition \ref{def:gammaconvalued}. It is
trivial to see that GGC($K$) includes all $K$-valued stable distribution using
the spectral representation of stable L\'evy measures and maps $h$ of the form
$h(s,U)=\{s\theta(U) \Gamma(\alpha+1)\}^{-\frac{1}{\alpha}}$ with $0<\alpha<1$.

\section{Positive Definite Matrix Gamma Distributions}

\label{sec:matrix}

\label{sec:conegammadist}

In this section we consider the important case of non-negative definite Gamma
random matrices. This corresponds to the closed cone $K=\overline{\mathbb{S}%
}_{d}^{+}$ of symmetric non-negative definite $d\times d$ matrices with inner
product $\left\langle X,Y\right\rangle =\mathrm{tr}(X^{\top}Y)$,
$X,Y\in\overline{\mathbb{S}}_{d}^{+}$. \ When $X$ is in the open cone
$\mathbb{S}_{d}^{+}$, we write $X>0$. When dealing with random matrices, a
useful matrix norm is the \textit{trace norm} defined for $X\in\mathbb{M}%
_{d}(\mathbb{R})$ as $\left\Vert X\right\Vert =\mathrm{tr}(\left(  X^{\top
}X\right)  ^{1/2}).$ We write $\mathbf{S}_{\left\Vert \cdot\right\Vert }%
^{+}=\mathbf{S}_{\left\Vert \cdot\right\Vert }^{\overline{\mathbb{S}}_{d}^{+}%
}=\mathbf{S}_{\left\Vert \cdot\right\Vert }\cap\overline{\mathbb{S}}_{d}^{+}$.
For $X\in\overline{\mathbb{S}}_{d}^{+},$ $\left\Vert X\right\Vert
=\mathrm{tr}(X)$ and, in particular, if $U\in\mathbf{S}_{\left\Vert
\cdot\right\Vert }^{+}$, $\mathrm{tr}(U)=\left\Vert U\right\Vert =1.$ By
Proposition 3.4 it is not important which norm we use.\ So we choose the one
most convenient to work with.

\subsection{General Case}

The matrix Gamma distribution $\mu\sim$ $\Gamma_{\overline{\mathbb{S}}_{d}%
^{+}}(\alpha,\beta)$ on $\overline{\mathbb{S}}_{d}^{+}$ \ has the Laplace
transform
\begin{equation}
L_{\mu}(\Theta)=\exp\left\{  -\int_{\mathbf{S}_{\left\Vert \cdot\right\Vert
}^{+}}\int_{0}^{\infty}\left(  1-e^{-r\mathrm{tr}(\Theta U)}\right)
\frac{e^{-\beta(U)r}}{r}\mathrm{d}r\alpha(\mathrm{d}U)\right\}  ,\forall
\Theta\in\overline{\mathbb{S}}_{d}^{+} \label{LTGamMat}%
\end{equation}
with alternative representation
\begin{equation}
L_{\mu}(\Theta)=\exp\left\{  -\int_{\mathbf{S}_{\left\Vert \cdot\right\Vert
}^{+}}\ln\left(  1+\frac{\mathrm{tr}(U\Theta)}{\beta(U)}\right)
\alpha(\mathrm{d}U)\right\}  ,\forall\Theta\in\overline{\mathbb{S}}_{d}^{+}.
\end{equation}

Additional properties of Gamma random matrices to those for the general cone
valued case in\ Section 4.2 are the following. \ 

If $M$ is a symmetric random matrix with Gamma distribution $\Gamma
_{\overline{\mathbb{S}}_{d}^{+}}(\alpha,\beta)$, $\mathrm{tr}(M)$ follows a
one-dimensional Gamma convolution law. However, in the homogeneous case
$\beta(U)=\beta_{0}>0$, $\mathrm{tr}(M)$ \ has a one-dimensional \ Gamma
distribution $\Gamma(\alpha(\mathbf{S}_{\left\Vert \cdot\right\Vert }%
^{+}),\beta_{0})$.

\begin{proposition}
\label{TrDis} a) If $M\sim\Gamma_{\overline{\mathbb{S}}_{d}^{+}}(\alpha
,\beta)$, $\mathrm{tr}(M)$ has a one-dimensional $GGC$ law with Laplace
transform
\begin{equation}
\mathbb{E}e^{-\theta\mathrm{tr}(M)}=\exp\left\{  -\int_{0}^{\infty}\ln\left(
1+\frac{\theta}{s}\right)  \upsilon_{\alpha,\beta}(\mathrm{d}s)\right\}
\label{TrDis1}%
\end{equation}
where $\upsilon_{\alpha,\beta}$ is the Thorin measure on $(0,\infty)$ induced
by $\alpha(\mathrm{d}U)$ on $\mathbf{S}_{\left\Vert \cdot\right\Vert }^{+}$
under the transformation $U\rightarrow\beta(U).$

b) If $M\sim\Gamma_{\overline{\mathbb{S}}_{d}^{+}}(\alpha,\beta)$ with
$\beta(U)=\beta_{0}$, $\mathrm{tr}(M)$ has the one-dimensional Gamma
distribution $\Gamma(\alpha(\mathbf{S}_{\left\Vert \cdot\right\Vert }%
^{+}),\beta_{0}).$
\end{proposition}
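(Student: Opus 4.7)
The plan is to obtain the Laplace transform of $\mathrm{tr}(M)$ by specializing the alternative representation of $L_\mu(\Theta)$ at $\Theta = \theta \mathrm{I}_d$ for $\theta \ge 0$, then perform a pushforward change of variables on the sphere.

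The key observation is that with the trace norm, any $U \in \mathbf{S}_{\|\cdot\|}^{+}$ satisfies $\mathrm{tr}(U) = \|U\| = 1$, so $\mathrm{tr}(\theta \mathrm{I}_d \cdot U) = \theta$. Plugging $\Theta = \theta \mathrm{I}_d$ into the alternative representation
\[
L_\mu(\Theta) = \exp\left\{-\int_{\mathbf{S}_{\|\cdot\|}^+} \ln\left(1 + \frac{\mathrm{tr}(U\Theta)}{\beta(U)}\right)\alpha(\mathrm{d}U)\right\}
\]
therefore yields
\[
\mathbb{E}e^{-\theta \mathrm{tr}(M)} = \exp\left\{-\int_{\mathbf{S}_{\|\cdot\|}^+} \ln\left(1 + \frac{\theta}{\beta(U)}\right)\alpha(\mathrm{d}U)\right\}.
\]
Next I apply the standard transformation formula for pushforward measures to the (measurable) map $U \mapsto \beta(U)$, obtaining
\[
\mathbb{E}e^{-\theta \mathrm{tr}(M)} = \exp\left\{-\int_0^\infty \ln\left(1 + \frac{\theta}{s}\right)\upsilon_{\alpha,\beta}(\mathrm{d}s)\right\},
\]
where $\upsilon_{\alpha,\beta} = \alpha \circ \beta^{-1}$. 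This matches exactly the GGC Laplace transform \eqref{cfGGC} with translation term $a=0$. To finish part a), I need to check that $\upsilon_{\alpha,\beta}$ satisfies the Thorin integrability conditions \eqref{tmGGC}. Since $\upsilon_{\alpha,\beta}$ is a finite measure on compact subsets away from $0$ (as $\alpha$ is finite), the second condition is trivial, and the first is precisely equivalent via the pushforward identity to the existence condition \eqref{CondExistMG} on $\alpha, \beta$ (as already used in the proof of Proposition \ref{ExitMulGam}, applied to the reduced measure $\tau = \upsilon_{\alpha,\beta}$).

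For part b), in the homogeneous case $\beta \equiv \beta_0$ the pushforward becomes the point mass $\upsilon_{\alpha,\beta_0} = \alpha(\mathbf{S}_{\|\cdot\|}^+) \delta_{\beta_0}$, so the formula from part a) collapses to
\[
\mathbb{E}e^{-\theta \mathrm{tr}(M)} = \left(1 + \frac{\theta}{\beta_0}\right)^{-\alpha(\mathbf{S}_{\|\cdot\|}^+)},
\]
which is recognized as the Laplace transform of a $\Gamma(\alpha(\mathbf{S}_{\|\cdot\|}^+), \beta_0)$ distribution. There is no genuine obstacle here; the only mild subtlety is ensuring the trace-norm choice is justified, which is covered by the norm-invariance result (Proposition \ref{th:normchange} adapted to the matrix setting as noted in Section \ref{sec:matrix}).
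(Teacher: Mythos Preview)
Your proof is correct and follows essentially the same route as the paper: specialize the alternative Laplace representation at $\Theta=\theta\mathrm{I}_d$, use $\mathrm{tr}(U)=1$ on the trace-norm sphere, and push forward $\alpha$ under $U\mapsto\beta(U)$ to obtain the Thorin measure. Your explicit verification of the Thorin integrability conditions \eqref{tmGGC} via \eqref{CondExistMG} is a detail the paper omits but is a welcome addition.
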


\begin{proof}
For $\theta>0$, let $\Theta=\theta\mathrm{I}_{d}$. Since $\mathbb{E}%
e^{-\theta\mathrm{tr}(M)}=\emph{L}_{\mu}(\Theta)$, from (\ref{AltLTMat})
\begin{align*}
\mathbb{E}e^{-\theta\mathrm{tr}(M)}  &  =\exp\left\{  -\int_{\mathbf{S}%
_{\left\Vert \cdot\right\Vert }^{+}}\ln\left(  1+\frac{\theta\mathrm{tr}%
(U)}{\beta(U)}\right)  \alpha(\mathrm{d}U)\right\} \\
&  =\exp\left\{  -\int_{0}^{\infty}\ln\left(  1+\frac{\theta}{s}\right)
\upsilon_{\alpha,\beta}(\mathrm{d}s)\right\}
\end{align*}
where $\upsilon_{\alpha,\beta}$ is the measure on $(0,\infty)$ induced by
$\alpha(\mathrm{d}U)$ on $\mathbf{S}_{\left\Vert \cdot\right\Vert }^{+}$ under
the transformation $U\rightarrow\beta(U)$. Then, using (\ref{cfGGC}) we obtain
(a). For (b) we observe that from the first equality in the last expression
with $\beta(U)=\beta_{0},$ we obtain $\mathbb{E}e^{-\theta\mathrm{tr}%
(M)}=\left(  1+\theta/\beta_{0}\right)  ^{-\alpha(\mathbf{S}_{\left\Vert
\cdot\right\Vert }^{+})}.$
\end{proof}

Any matrix Gamma distribution $\Gamma_{\overline{\mathbb{S}}_{d}^{+}}%
(\alpha,\beta)$ \ is self-decomposable and if $\operatorname{supp}({\alpha)}$
is of full dimension, it is absolutely continuous with respect to the Lebesgue
measure on $\mathbb{S}_{d}$ (which can be identified with $\mathbb{R}%
^{d(d+1)/2}$) and so there is a density. \ The proof follows from the
multivariate case, identifying the cone $\mathbb{S}_{d}^{+}$ with
$\mathbb{R}^{d(d+1)/2}.$ Moreover, since the Lebesgue measure of
$\overline{\mathbb{S}}_{d}^{+}\backslash\mathbb{S}_{d}^{+}$ is zero, the
distribution $\Gamma_{\overline{\mathbb{S}}_{d}^{+}}(\alpha,\beta)$ is
supported in the the open cone $X>0.$ In other words

\begin{corollary}
\label{PDP1} Let $M$ be a random matrix with Gamma distribution $\Gamma
_{\overline{\mathbb{S}}_{d}^{+}}(\alpha,\beta)$ \ with $\operatorname{supp}%
(\alpha)$ of full dimension. Then $\mathbb{P}(M>0)=1.$
\end{corollary}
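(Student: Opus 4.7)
The plan is to leverage the absolute continuity statement established in the paragraph immediately preceding the corollary, and then show that the set of singular non\-neg\-a\-tive definite matrices is Lebesgue-null in $\mathbb{S}_{d}$.

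First I would invoke the discussion just above: via the identification of $\mathbb{S}_d$ with $\mathbb{R}^{d(d+1)/2}$, Proposition \ref{DenMuG} applied in the matrix setting tells us that, since $\operatorname{supp}(\alpha)$ is of full dimension in $\mathbb{S}_d$, the distribution $\Gamma_{\overline{\mathbb{S}}_d^+}(\alpha,\beta)$ is absolutely continuous with respect to the Lebesgue measure on $\mathbb{S}_d$. In particular, $M$ takes values in $\overline{\mathbb{S}}_d^+$ almost surely (by construction of the cone-valued distribution), and any Borel subset of $\mathbb{S}_d$ that is Lebesgue-null is also $\mathbb{P}\circ M^{-1}$-null.

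Next I would observe that
\[
\overline{\mathbb{S}}_d^+\setminus\mathbb{S}_d^+=\{X\in\overline{\mathbb{S}}_d^+:|X|=0\}.
\]
The determinant $X\mapsto|X|$ is a non-zero polynomial on $\mathbb{S}_d$ (it does not vanish identically, since $|\mathrm{I}_d|=1$), and therefore its zero set is an algebraic hypersurface in $\mathbb{S}_d\cong\mathbb{R}^{d(d+1)/2}$, which has Lebesgue measure zero. Hence $\overline{\mathbb{S}}_d^+\setminus\mathbb{S}_d^+$ is Lebesgue-null.

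Combining the two, I conclude $\mathbb{P}(M\in\overline{\mathbb{S}}_d^+\setminus\mathbb{S}_d^+)=0$, so $\mathbb{P}(M>0)=\mathbb{P}(M\in\mathbb{S}_d^+)=1$. There is no real obstacle here; the only slightly subtle point is making precise that the matrix-variate version of Proposition \ref{DenMuG} applies verbatim under the identification $\mathbb{S}_d\cong\mathbb{R}^{d(d+1)/2}$, which is exactly what the preceding paragraph asserts.
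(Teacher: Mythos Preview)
Your proof is correct and follows essentially the same approach as the paper: the paper's argument, given in the paragraph immediately preceding the corollary, likewise invokes absolute continuity (via the matrix analogue of Proposition~\ref{DenMuG}) and then uses that $\overline{\mathbb{S}}_d^+\setminus\mathbb{S}_d^+$ has Lebesgue measure zero. Your additional justification that the determinant is a non-vanishing polynomial, and hence its zero set is Lebesgue-null, simply makes explicit what the paper asserts without proof.
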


The following result is an adaptation of Proposition 3.8 to special linear
operators preserving the cone $\mathbb{S}_{d}^{+}$. Observe that all
invertible surjective linear operators preserving $\mathbb{S}_{d}^{+}$ are of
the form $X\mapsto CXC^{T}$ with some $C\in GL_{d}(\mathbb{R})$ (see
\cite{Lietal2001,Loewy1992}).

\begin{proposition}
\label{Ma6} Let $M$ $\sim$ $\Gamma_{\overline{\mathbb{S}}_{d}^{+}}%
(\alpha,\beta)$ with respect to the trace norm $\left\Vert \cdot\right\Vert $
and let $C\in$ $\mathcal{GL}_{d}(\mathbb{R})$. Then $Y=CMC^{\top}\sim$
$\Gamma_{\mathbf{S}_{\left\Vert \cdot\right\Vert _{C}}^{+}}(\alpha_{c}%
,\beta_{c}),$ where $\mathbf{S}_{\left\Vert \cdot\right\Vert _{C}}%
^{+}=\mathbf{S}_{\left\Vert \cdot\right\Vert _{c}}\cap\overline{\mathbb{S}%
}_{d}^{+}$ \ for $\left\Vert B\right\Vert _{C}=\left\Vert C^{-1}BC^{-\top
}\right\Vert $ and
\begin{equation}
\alpha_{C}(E)=\alpha(C^{-1}EC^{-\top})\text{,\quad}\forall E\in\mathcal{B}%
(\mathbf{S}_{\left\Vert \cdot\right\Vert _{C}}^{+})\text{,} \label{alfaLT}%
\end{equation}
and%
\begin{equation}
\beta_{C}(U)=\beta(C^{-1}EC^{-\top})\text{,\quad}\forall U\in\mathbf{S}%
_{\left\Vert \cdot\right\Vert _{C}}^{+}. \label{betaLT}%
\end{equation}
Moreover, $Y\sim$ $\Gamma_{\overline{\mathbb{S}}_{d}^{+}}(\widehat{\alpha}%
_{C},\widehat{\beta}_{C})$ with respect to the trace norm $\left\Vert
\cdot\right\Vert $ where
\begin{equation}
\widehat{\alpha}_{C}(E)=\int_{\mathbf{S}_{d}^{+}(C)}1_{E}\left(  \frac
{U}{\Vert U\Vert}\right)  \alpha_{C}(\mathrm{d}U),\quad\forall\,E\in
\mathcal{B}(\mathbf{S}_{\Vert\cdot\Vert}^{+}) \label{alfaLT1}%
\end{equation}
and%
\begin{equation}
\widehat{\beta}_{C}(U)=\beta_{C}\left(  \frac{U}{\Vert U\Vert_{C}}\right)
\Vert U\Vert_{C}\,,\quad\forall U\in\mathbf{S}_{\Vert\cdot\Vert}^{+}.
\label{betaLT1}%
\end{equation}

\end{proposition}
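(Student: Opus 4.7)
The plan is to directly mimic the argument of Proposition \ref{LinTransGam}, but working with the Laplace transform instead of the characteristic function and with the transformation $M\mapsto CMC^{\top}$ in place of a generic linear map. Concretely, I would start from
\[
L_{Y}(\Theta)=\mathbb{E}e^{-\langle\Theta,CMC^{\top}\rangle}=\mathbb{E}e^{-\mathrm{tr}(\Theta CMC^{\top})}=\mathbb{E}e^{-\mathrm{tr}(C^{\top}\Theta C\,M)}=L_{M}(C^{\top}\Theta C),
\]
valid for every $\Theta\in\overline{\mathbb{S}}_{d}^{+}$ since $C^{\top}\Theta C\in\overline{\mathbb{S}}_{d}^{+}$. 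Then I would insert the representation \eqref{LTGamMat} for $L_{M}$ and use $\mathrm{tr}(C^{\top}\Theta CU)=\mathrm{tr}(\Theta\,CUC^{\top})$ to move $C$ from the argument $\Theta$ onto the integration variable $U$.

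Next I would perform the substitution $V=CUC^{\top}$ in the spherical integral. Since $C\in\mathcal{GL}_{d}(\mathbb{R})$, the map $U\mapsto CUC^{\top}$ is a bijection of $\overline{\mathbb{S}}_{d}^{+}$. Moreover, $\|V\|_{C}=\|C^{-1}VC^{-\top}\|=\|U\|=1$ exactly when $U\in\mathbf{S}_{\|\cdot\|}^{+}$, so $V$ ranges over $\mathbf{S}_{\|\cdot\|_{C}}^{+}$. Pushing $\alpha$ forward through this bijection gives precisely the measure $\alpha_{C}(E)=\alpha(C^{-1}EC^{-\top})$ of \eqref{alfaLT}, while $\beta$ transforms via composition into $\beta_{C}(V)=\beta(C^{-1}VC^{-\top})$ of \eqref{betaLT}. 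Substituting, the Laplace transform takes the form
\[
L_{Y}(\Theta)=\exp\left\{-\int_{\mathbf{S}_{\|\cdot\|_{C}}^{+}}\int_{0}^{\infty}\bigl(1-e^{-r\,\mathrm{tr}(\Theta V)}\bigr)\frac{e^{-\beta_{C}(V)r}}{r}\mathrm{d}r\,\alpha_{C}(\mathrm{d}V)\right\},
\]
which is exactly the definition of $\Gamma_{\mathbf{S}_{\|\cdot\|_{C}}^{+}}(\alpha_{C},\beta_{C})$ and gives the first claim.

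For the second assertion I would appeal to the cone-analog of Proposition \ref{th:normchange}, whose validity was already noted in Section \ref{sec:ggc} (the formulas after \eqref{AltLTMat}). Applying it to rewrite the distribution of $Y$, currently expressed via the norm $\|\cdot\|_{C}$, in terms of the trace norm $\|\cdot\|$ yields parameters $(\widehat{\alpha}_{C},\widehat{\beta}_{C})$ given by the push-forward under $V\mapsto V/\|V\|$ and the associated rescaling of $\beta_{C}$, which is precisely \eqref{alfaLT1}--\eqref{betaLT1}.

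I expect the computation itself to be essentially mechanical once the Laplace transform identity $L_{Y}(\Theta)=L_{M}(C^{\top}\Theta C)$ is in place. The only point requiring care, and the main bookkeeping obstacle, is managing the three different unit spheres involved ($\mathbf{S}_{\|\cdot\|}^{+}$ for $M$, $\mathbf{S}_{\|\cdot\|_{C}}^{+}$ for the intermediate representation of $Y$, and $\mathbf{S}_{\|\cdot\|}^{+}$ again for the final trace-norm representation), together with checking that the measurability and positivity of $\beta_{C}$ are preserved and that the existence condition \eqref{CondExistMG} for the new parameters follows automatically from the one for $(\alpha,\beta)$ under the bijective change of variables.
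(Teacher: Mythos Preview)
Your proposal is correct and follows precisely the route the paper indicates: the paper does not give an explicit proof of this proposition but states that it is ``an adaptation of Proposition \ref{LinTransGam} to special linear operators preserving the cone $\mathbb{S}_{d}^{+}$,'' which is exactly what you carry out via the Laplace transform identity $L_{Y}(\Theta)=L_{M}(C^{\top}\Theta C)$ and the substitution $V=CUC^{\top}$. The second part likewise matches the paper's implicit argument, invoking the cone version of Proposition \ref{th:normchange} (noted after \eqref{AltLTMat}) to pass from the norm $\|\cdot\|_{C}$ back to the trace norm.
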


\begin{example}
\textbf{ }\label{Ma1} (Diagonal matrix with independent entries). As pointed
out in \cite{BNPA08}, an infinitely divisible non-negative definite random
matrix $M$ has independent components if and only if it is diagonal and
therefore its Lévy measure is concentrated in the diagonal matrix axes
$E^{ii}\in\overline{\mathbb{S}}_{d}^{+}$, $i=1,...d.$ Thus, a Gamma random
matrix $M$ $\sim$ $\Gamma_{\overline{\mathbb{S}}_{d}^{+}}(\alpha,\beta)$ has
independent components, if and only if there exist non-negative numbers
$\beta_{1},...,\beta_{d}$ such that the Lévy measure $\nu_{X}$ is given by
\[
\nu_{M}(E)=%
{\displaystyle\sum\limits_{i=1}^{d}}
\alpha(E_{ii})\int_{0}^{\infty}1_{E}(rU)\frac{e^{-r\beta_{i}}}{r}%
\mathrm{d}r\quad E\in\mathcal{B}(\overline{\mathbb{S}}_{d}^{+}).
\]

\end{example}

Further examples are considered in the next section.

\subsection{The A$\Gamma$-distribution}

We now introduce a special matrix distribution $A\Gamma_d(\eta,\Sigma)$ in the
open cone $\mathbb{S}_{d}^{+}$ with parameters, $\eta>(d-1)/2$ and $\Sigma
\in\mathbb{S}_{d}^{+}$. \ We study several properties including a relation
between cumulants of $A\Gamma(\eta,\Sigma)$ and the moments of a Wishart
distribution. 

The multivariate Gamma function, denoted by $\Gamma_{d}(\eta)$, is defined for
$\operatorname{Re}(\eta)>(d-1)/2$ as
\begin{equation}
\Gamma_{d}(\eta)=\int_{X>0}e^{-\mathrm{tr(X})}\left\vert X\right\vert
^{\eta-(d+1)/2}\mathrm{d}X, \label{MulGammFunc}%
\end{equation}
where \textrm{d}$X$ is the Lebesgue measure on $\mathbb{S}_{d}^{+}$
(identified with $\mathbb{R}^{d(d+1)/2})$; see for example \cite[p.61]%
{Mu82}$.$ An alternative expression for $\Gamma_{d}(\eta)$ is (\cite[Theorem
2.1.12]{Mu82})
\begin{equation}
\Gamma_{d}(\eta)=\pi^{d(d-1)/4}%
{\displaystyle\prod\limits_{i=1}^{d}}
\Gamma\left(  \eta-\frac{1}{2}(i-1)\right)  ,\quad\operatorname{Re}%
(\eta)>(d-1)/2. \label{AltMulGammFunc}%
\end{equation}

The special infinitely divisible matrix Gamma distribution  $A\Gamma_d(\eta,\Sigma)$ is
defined as follows. For $\eta>(d-1)/2$, consider the measure $\rho_{\eta
}(\mathrm{d}X)=g_{\eta}(X)\mathrm{d}X$ on the open cone $\mathbb{S}_{d}^{+}$
where%
\begin{equation}
g_{\eta}(X)=c_{d,\eta}\frac{e^{-\mathrm{tr}(X)}}{\left(  \mathrm{tr}%
(X)\right)  ^{\eta d}}\left\vert X\right\vert ^{\eta-(d+1)/2},\quad
X>0,\label{LevDet}%
\end{equation}
and
\begin{equation}
c_{d,\eta}=\omega_{d,\eta}\frac{\Gamma(\eta d)}{\Gamma_{d}(\eta)}\label{Const}%
\end{equation}
and $\omega_{d,\eta}>0$ is given.

\begin{proposition}
\label{Ma2} Let $\eta>(d-1)/2.$ There exists a homogeneous Gamma matrix
distribution $\Gamma_{\mathbb{S}_{d}^{+}}(\alpha_{\eta},\beta)$ with respect
to the trace norm where $\beta(U)=1$ for each $U\in$ $\mathbf{S}_{\left\Vert
\cdot\right\Vert }^{+}$ and $\alpha_{\eta}$ is the measure on $\mathbf{S}%
_{\left\Vert \cdot\right\Vert }^{+}$ given by
\begin{equation}
\alpha_{\eta}(\mathrm{d}U)=c_{d,\eta}\left\vert U\right\vert ^{\eta}%
\frac{\mathrm{d}U}{\left\vert U\right\vert ^{(d+1)/2}} \label{LevyDetProbSph}%
\end{equation}
with $\alpha_{\eta}(\mathbf{S}_{\left\Vert \cdot\right\Vert }^{+}%
)=\omega_{d,\eta}$. Moreover, the Lévy measure of $\ \Gamma_{\mathbb{S}%
_{d}^{+}}(\alpha_{\eta},\beta)$ is $\rho_{\eta}$ and has a polar
decomposition
\begin{equation}
\rho_{\eta}(E)=\int_{E}g_{\eta}(X)\mathrm{d}X=c_{d,\eta}\int_{\mathbf{S}%
_{\left\Vert \cdot\right\Vert }^{+}}\int_{0}^{\infty}1_{E}(rU)\frac{e^{-r}}%
{r}\mathrm{d}r\frac{\left\vert U\right\vert ^{\eta}\mathrm{d}U}{\left\vert
U\right\vert ^{(d+1)/2}},\quad E\in\mathcal{B}(\overline{\mathbb{S}}_{d}^{+}).
\label{PolDecDet}%
\end{equation}

\end{proposition}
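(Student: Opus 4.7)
The plan is to derive the claimed representation by performing a polar decomposition of the Lebesgue measure on $\mathbb{S}_d^+$ using the trace norm, then matching coefficients with the density $g_\eta$. Let me write $n=d(d+1)/2=\dim\mathbb{S}_d$. Every $X\in\mathbb{S}_d^+$ has a unique representation $X=rU$ with $r=\mathrm{tr}(X)>0$ and $U=X/\mathrm{tr}(X)\in\mathbf{S}_{\|\cdot\|}^+$. Under this change of coordinates, Lebesgue measure on $\mathbb{S}_d^+$ factorises as $\mathrm{d}X = r^{n-1}\,\mathrm{d}r\,\mathrm{d}U$, where $\mathrm{d}U$ denotes the surface measure on $\mathbf{S}_{\|\cdot\|}^+$ induced from Lebesgue measure on $\mathbb{S}_d$. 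This is a standard consequence of the homogeneity of $\mathbb{S}_d\cong\mathbb{R}^{d(d+1)/2}$ under dilations and is the only nontrivial piece of geometry needed.

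Second, I would use the homogeneities $\mathrm{tr}(X)=r$ and $|X|=r^d|U|$ to rewrite $g_\eta$. Plugging in gives
\[
g_\eta(X)=c_{d,\eta}\,\frac{e^{-r}}{r^{\eta d}}\cdot r^{d\eta-d(d+1)/2}\,|U|^{\eta-(d+1)/2} = c_{d,\eta}\,e^{-r}\,r^{-n}\,|U|^{\eta-(d+1)/2},
\]
so that
\[
g_\eta(X)\,\mathrm{d}X = c_{d,\eta}\,\frac{e^{-r}}{r}\,\mathrm{d}r\,|U|^{\eta-(d+1)/2}\,\mathrm{d}U.
\]
Reading off the polar factorisation, this is exactly \eqref{PolDecDet} with $\alpha_\eta(\mathrm{d}U)=c_{d,\eta}|U|^{\eta-(d+1)/2}\mathrm{d}U$ as claimed in \eqref{LevyDetProbSph}, and with $\beta\equiv 1$. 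The boundary $\overline{\mathbb{S}}_d^+\setminus\mathbb{S}_d^+$ has Lebesgue measure zero, so $\rho_\eta$ is unambiguously a measure on $\overline{\mathbb{S}}_d^+$.

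Third, I would verify $\alpha_\eta(\mathbf{S}_{\|\cdot\|}^+)=\omega_{d,\eta}$ by applying the very same polar decomposition inside the definition \eqref{MulGammFunc} of $\Gamma_d(\eta)$:
\[
\Gamma_d(\eta)=\int_0^\infty\!\!\int_{\mathbf{S}_{\|\cdot\|}^+} e^{-r}\,r^{d\eta-n}\,|U|^{\eta-(d+1)/2}\cdot r^{n-1}\,\mathrm{d}r\,\mathrm{d}U = \Gamma(d\eta)\int_{\mathbf{S}_{\|\cdot\|}^+} |U|^{\eta-(d+1)/2}\,\mathrm{d}U.
\]
The condition $\eta>(d-1)/2$ is exactly what guarantees finiteness of $\Gamma_d(\eta)$ by \eqref{AltMulGammFunc}, and hence of the spherical integral. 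Multiplying by $c_{d,\eta}=\omega_{d,\eta}\Gamma(d\eta)/\Gamma_d(\eta)$ yields $\alpha_\eta(\mathbf{S}_{\|\cdot\|}^+)=\omega_{d,\eta}$ as required.

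Finally, to conclude existence of the Gamma matrix distribution $\Gamma_{\mathbb{S}_d^+}(\alpha_\eta,\beta)$, I would invoke the cone-valued existence criterion \eqref{CondExistMG}: since $\beta(U)\equiv 1$ and $\alpha_\eta$ is a finite measure, the integral $\int_{\mathbf{S}_{\|\cdot\|}^+}\ln(1+1/\beta(U))\,\alpha_\eta(\mathrm{d}U)=\omega_{d,\eta}\ln 2<\infty$ is trivially finite, so the distribution exists. The only serious step is the correct form of the Jacobian in the polar decomposition of $\mathbb{S}_d$; everything else is a matter of tracking the scaling exponents and invoking the integral formula for $\Gamma_d(\eta)$.
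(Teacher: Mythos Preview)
Your proof is correct and follows essentially the same route as the paper: the same polar change of variable $X=rU$ with Jacobian $\mathrm{d}X=r^{d(d+1)/2-1}\,\mathrm{d}r\,\mathrm{d}U$, the same evaluation of $\Gamma_d(\eta)$ to show $\alpha_\eta(\mathbf{S}_{\|\cdot\|}^+)=\omega_{d,\eta}$, and the same appeal to the existence criterion with $\beta\equiv 1$. The only minor omission is that the paper also notes, via Corollary~\ref{PDP1}, that the resulting distribution is concentrated on the open cone $\mathbb{S}_d^+$ because $\operatorname{supp}(\alpha_\eta)$ has full dimension; your remark about the boundary having Lebesgue measure zero addresses only the L\'evy measure, not the support of the distribution itself.
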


\begin{proof}
To show existence of the matrix distribution $\Gamma_{\mathbb{S}_{d}^{+}%
}(\alpha_{\eta},\beta)$, by Proposition 3.3 it suffices to prove that
$\alpha_{\eta}$ is a finite measure, since trivially $\beta$ satisfies
(\ref{MomConMat}). The fact that $\Gamma_{\mathbb{S}_{d}^{+}}(\alpha_{\eta
},\beta)$ is concentrated in the open cone $\mathbb{S}_{d}^{+}$ \ will follow
by Corollary \ref{PDP1} since from (\ref{LevyDetProbSph}) $\operatorname{supp}%
({\alpha}_{\eta})$ has full dimension.

For $X>0$ make the change of variable
\begin{equation}
X=rU,r=\mathrm{tr(}X),\text{ }\mathrm{tr(}U)=1,\mathrm{d}X=r^{d{(\mathrm{d}%
+1)/2}-1}\mathrm{d}r\mathrm{d}U \label{changvar}%
\end{equation}
(\cite[p. 111]{Ma97})$.$ Using this in (\ref{MulGammFunc}) and the fact that
$\left\vert rU\right\vert =r^{d}\left\vert U\right\vert $
\begin{align}
\Gamma_{d}(\eta)  &  =\int_{0}^{\infty}\int_{\mathbf{S}_{\left\Vert
\cdot\right\Vert }^{+}}r^{d\eta-1}e^{-r}\mathrm{d}r\left\vert U\right\vert
^{\eta}\frac{\mathrm{d}U}{\left\vert U\right\vert ^{(d+1)/2}}\nonumber\\
&  =\Gamma(\eta d)\int_{\mathbf{S}_{\left\Vert \cdot\right\Vert }^{+}%
}\left\vert U\right\vert ^{\eta}\frac{\mathrm{d}U}{\left\vert U\right\vert
^{(d+1)/2}} \label{aux1}%
\end{align}
and hence $\alpha_{\eta}(\mathbf{S}_{\left\Vert \cdot\right\Vert }^{+}%
)=\omega_{d,\eta}.$ Using again the change of variable (\ref{changvar}) we
have (\ref{PolDecDet}).
\end{proof}

\begin{definition}
Let $\eta>(d-1)/2$ and $\Sigma\in\mathbb{S}_{d}^{+}$. An infinitely divisible
$p\times p$ positive definite random matrix $M$ is said to follow the
distribution $A\Gamma_{d}(\eta,\Sigma)$ if it has Gamma distribution
$\Gamma_{\mathbb{S}_{d}^{+}}(\alpha_{\eta,\Sigma},\beta_{\Sigma})$ with
respect to the trace norm where $\beta_{\Sigma}(U)=$ $\mathrm{tr}(\Sigma
^{-1}U)$ and
\begin{equation}
\alpha_{\eta,\Sigma}(\mathrm{d}U)=\frac{1}{\left\vert \Sigma\right\vert
^{\eta}\mathrm{tr}(\Sigma^{-1}U)^{\eta d}}\alpha_{\eta}(\mathrm{d}U)
\label{MeaSphSig}%
\end{equation}
and $\alpha_{\eta}$ is given by (\ref{LevyDetProbSph}).
\end{definition}

\begin{remark}
a) The distribution $A\Gamma_{d}(\eta,\Sigma)$ has also as a parameter the
total mass $\alpha_{\eta}(\mathbf{S}_{\left\Vert \cdot\right\Vert }%
^{+})=\omega_{d,\eta}$. This parameter is conjectured to be of particular importance when considering the limiting spectral (eigenvalue)  distribution as the dimension goes to infinity, since it may then depend on $\eta$ or $d$. Particularly, interesting
choices of $\omega_{d,\eta}$ in this connection should be $d\eta$, $d$ or a constant.

b) The case $\eta=(d+1)/2$ was considered in Barndorff-Nielsen and Pérez-Abreu
\cite{BNPA08}.

c)  It is interesting to note that for $\eta\in((d-1)/2,(d+1)/2)$ the L\'evy density becomes infinity at the non-invertible elements of $\mathbb{S}_d^+$ (i.e. the matrices which are positive semi-definite, but not strictly), whereas for $\eta>(d+1)/2$ the L\'evy density becomes zero at the non-invertible elements of $\mathbb{S}_d^+$. For $\eta=(d+1)/2$ we have that $\alpha_{\eta,I_d}$ is the uniform measure on the unit sphere. This observation is also interesting in relation to applications using  e.g. $A\Gamma_d(\eta,\Sigma)$ matrix subordinators. If all jumps should really be strictly positive definite , then a model with $\eta>(d+1)/2$ should be appropriate, whereas one should use  $\eta\in((d-1)/2,(d+1)/2)$, if it seems desirable to have most of  the strictly positive definite jumps rather close to non-invertible matrices. The latter should be especially useful in stochastic volatility models in finance where very often news (resulting in jumps of the covariance) should only really affect a single asset or a special group of assets (like stocks of companies from the same country or the same branch of industry). 
\end{remark}

Note that if $M\sim A\Gamma(\eta,\mathrm{I}_{d})$, then $\Sigma^{1/2}%
M\Sigma^{1/2}\sim A\Gamma_{d}(\eta,\Sigma)$. This follows from Proposition
\ref{Ma6} which also gives together with Proposition \ref{Ma2}, that
$A\Gamma_{d}(\eta,\Sigma)$ has Lévy measure $\rho_{\eta,\Sigma}(\mathrm{d}%
X)=g_{\eta,\Sigma}(X)\mathrm{d}X$ where
\begin{equation}
g_{\eta,\Sigma}(X)=\frac{c_{d,\eta}}{\left\vert \Sigma\right\vert ^{\eta}%
}\frac{e^{-\mathrm{tr}(\Sigma^{-1}X)}}{\left[  \mathrm{tr}(\Sigma
^{-1}X)\right]  ^{\eta d}}\left\vert X\right\vert ^{\eta-(d+1)/2},\quad X>0.
\label{LevyDetFamA}%
\end{equation}
The existence of moments of all orders of $A\Gamma_{d}(\eta,\mathrm{I}_{d})$
follows since (\ref{MomConMat}) is trivially satisfied. The same is true for
$A\Gamma_{d}(\eta,\Sigma)$ since $\Sigma^{1/2}M\Sigma^{1/2}\sim A\Gamma
_{d}(\eta,\Sigma)$.

In the homogeneous case the distribution $A\Gamma_{d}(\eta,\sigma
\mathrm{I}_{d})$ with $\sigma\in\bbr^+$ is invariant under orthogonal conjugations and the trace
follows a one-dimensional Gamma distribution.

\begin{lemma}
\label{HomTr} Let $\eta>(d-1)/2$ and $\sigma>0.$

a) The distribution $A\Gamma_{d}(\eta,\sigma\mathrm{I}_{d})$ is
invariant under orthogonal conjugations.

b) If $M\sim A\Gamma_{d}(\eta,\sigma\mathrm{I}_{d})$, then $\mathrm{tr}(M)$
follows a one-dimensional Gamma distribution $\Gamma(\omega_{d,\eta},\sigma).$
\end{lemma}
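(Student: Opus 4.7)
The plan is to reduce both parts to an explicit computation showing that $A\Gamma_{d}(\eta,\sigma\mathrm{I}_{d})$ is in fact a homogeneous Gamma matrix distribution, and then invoke previously proved results about homogeneous Gamma distributions.

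First I would compute the parameters explicitly. For $U\in\mathbf{S}_{\Vert\cdot\Vert}^{+}$ (trace norm), we have $\mathrm{tr}(U)=1$, hence $\beta_{\sigma\mathrm{I}_{d}}(U)=\mathrm{tr}(\sigma^{-1}U)=\sigma^{-1}$ is constant in $U$. Likewise, $|\sigma\mathrm{I}_{d}|^{\eta}=\sigma^{d\eta}$ and $[\mathrm{tr}(\sigma^{-1}U)]^{\eta d}=\sigma^{-\eta d}$, so by \eqref{MeaSphSig} $\alpha_{\eta,\sigma\mathrm{I}_{d}}=\alpha_{\eta}$. Thus $A\Gamma_{d}(\eta,\sigma\mathrm{I}_{d})=\Gamma_{\mathbb{S}_{d}^{+}}(\alpha_{\eta},1/\sigma)$ in the homogeneous sense with constant $\beta_{0}=1/\sigma$ and total mass $\alpha_{\eta}(\mathbf{S}_{\Vert\cdot\Vert}^{+})=\omega_{d,\eta}$.

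For part (a), I would work with the L\'evy measure $\rho_{\eta,\sigma\mathrm{I}_{d}}(\mathrm{d}X)=g_{\eta,\sigma\mathrm{I}_{d}}(X)\mathrm{d}X$ from \eqref{LevyDetFamA}. For any $O\in\mathcal{O}(d)$, the map $X\mapsto OXO^{\top}$ is a linear isometry of $(\mathbb{S}_{d},\langle\cdot,\cdot\rangle)$ preserving $\mathbb{S}_{d}^{+}$, so it preserves both Lebesgue measure on $\mathbb{S}_{d}$ and the trace norm on $\mathbb{S}_{d}^{+}$. Since $\mathrm{tr}(\sigma^{-1}OXO^{\top})=\sigma^{-1}\mathrm{tr}(X)$ and $|OXO^{\top}|=|X|$, the density $g_{\eta,\sigma\mathrm{I}_{d}}$ depends on $X$ only through orthogonally invariant quantities, hence $\rho_{\eta,\sigma\mathrm{I}_{d}}$ is invariant under orthogonal conjugations. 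The Gaussian component is zero, and the drift from (\ref{gllkr}) can be handled either by noting $\Psi_0=0$ in the subordinator representation or by observing that the unique drift attached to the L\'evy measure via $\int_{\|x\|\leq 1}x\,\nu(\mathrm{d}x)$ is itself orthogonally invariant. Uniqueness of the L\'evy--Khintchine triplet then gives the desired invariance of the whole distribution. An equivalent route uses Proposition \ref{Ma6} with $C=O$: then $\Vert\cdot\Vert_{O}=\Vert\cdot\Vert$, and the invariance of $\alpha_{\eta}$ under the pushforward by $U\mapsto OUO^{\top}$ (together with $\beta_{0}$ being constant) gives $OMO^{\top}\sim A\Gamma_{d}(\eta,\sigma\mathrm{I}_{d})$.

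For part (b), once the homogeneity established above is in hand, the statement is immediate from Proposition \ref{TrDis}(b): in the homogeneous case $\beta(U)\equiv\beta_{0}=1/\sigma$, the trace has one-dimensional Gamma law with shape $\alpha_{\eta}(\mathbf{S}_{\Vert\cdot\Vert}^{+})=\omega_{d,\eta}$ and scale/rate parameter governed by $\beta_{0}=1/\sigma$, matching the asserted $\Gamma(\omega_{d,\eta},\sigma)$ under the paper's parametrisation. Neither step involves a real obstacle; the only subtle point is the verification in (a) that the trace-norm sphere density $|U|^{\eta-(d+1)/2}$ is orthogonally invariant, which reduces to $|OUO^{\top}|=|U|$ and invariance of the induced surface measure.
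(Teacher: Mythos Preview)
Your proposal is correct and follows essentially the same route as the paper: for (a) you show the L\'evy density $g_{\eta,\sigma\mathrm{I}_d}$ is orthogonally invariant (the paper does this by citing the invariance of $|X|^{-(d+1)/2}\mathrm{d}X$ under non-singular conjugations from \cite{Ea83}, while you argue directly that orthogonal conjugations are isometries of $\mathbb{S}_d$ preserving trace and determinant), and for (b) both you and the paper simply invoke Proposition~\ref{TrDis}(b). Your explicit preliminary verification that $\beta_{\sigma\mathrm{I}_d}\equiv\sigma^{-1}$ and $\alpha_{\eta,\sigma\mathrm{I}_d}=\alpha_{\eta}$ is a useful addition that the paper leaves implicit.
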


\begin{proof}
It is well known that the measure $\mathrm{d}X/\left\vert X\right\vert
^{(d+1)/2}$ is invariant under the conjugation $X\rightarrow$ $CXC^{\top}$,
for $X>0$ and any non-singular matrix $C$ (see \cite[Example 6.19]{Ea83}). The
determinant and the trace norm functions are invariant under the conjugation
$X\rightarrow$ $OXO^{\top}$, for $X>0$ and any $O\in$ $\mathcal{O}%
(\mathrm{d})$. Thus the Lévy measure $\rho_{\eta,\sigma\mathrm{I}_{d}}$ with
density (\ref{LevyDetFamA}), $\Sigma=$ $\sigma\mathrm{I}_{d}$, is invariant
under orthogonal conjugations and so the matrix distribution \ $A\Gamma
(\eta,\sigma\mathrm{I}_{d})$ is. Proposition \ref{TrDis}(b) gives
(b).\smallskip\ 
\end{proof}

The cumulants of the distribution $A\Gamma_{d}(\eta,\sigma\mathrm{I}_{d})$ are
related to the moments of the Wishart distribution, as we prove below. Recall
that a $d\times d$ positive definite random matrix $W$ is said to have a
Wishart distribution $W_{d}(n,d)$ with parameters $n>d-1$ and $\Sigma\in$
$\mathbb{S}_{d}^{+},$ if its density function is given by
\begin{equation}
f_{W}(A)=\frac{1}{2^{\frac{dn}{2}}\Gamma_{d}(\frac{n}{2})\left\vert
\Sigma\right\vert ^{\frac{n}{2}}}e^{-\frac{1}{2}\mathrm{tr}(\Sigma^{-1}%
A)}\left\vert A\right\vert ^{(n-d-1)/2},\text{ }A>0. \label{WishDsty}%
\end{equation}
As usual we denote by $A\otimes B$ the tensor product of the matrices $A$ and
$B$. We recall that if $A$ and $B$ are in $\mathbb{M}_{n}$, then
$\mathrm{tr}(A\otimes B)=\mathrm{tr}(A)\mathrm{tr}(B)$ and $\left\vert
A\otimes B\right\vert =$ $\left\vert A\right\vert ^{n}\left\vert B\right\vert
^{n}$.

We use the notation \textrm{$B$}$(n,m)=\Gamma(n)\Gamma(m)/\Gamma(n+m)$ for the
Beta function, where $\operatorname{Re}(n)>0,\operatorname{Re}(m)>0$, and for
the multivariate Beta function, denoted by \textrm{$B$}$_{d}(x,y)$,
\begin{equation}
\mathrm{B}_{d}(n,m)=\frac{\Gamma_{d}(n)\Gamma_{d}(m)}{\Gamma_{d}(n+m)}%
,\quad\operatorname{Re}(n)>0,\operatorname{Re}(m)>0. \label{multBeta}%
\end{equation}

\begin{proposition}
\label{MainPropAG} Let $\eta>(d-1)/2$ and $g_{\eta,\Sigma}(X)$ be the Lévy
density of $A\Gamma_{d}(\eta,\Sigma).$ Let $W$ be a random matrix with Wishart
distribution $W_{d}(2\eta,\Sigma).$ For any integer $p>0$ the following three
identities hold:

a)
\begin{equation}
\int_{X>0}X^{p}g_{\eta,\Sigma}(X)\mathrm{d}X=\frac{\omega_{d,\eta}}{2^{p}%
}\mathrm{B}(\eta d,p)\mathbb{E}W^{p}. \label{main1}%
\end{equation}

b)%
\begin{equation}
\int_{X>0}X^{\otimes p}g_{\eta,\Sigma}(X)\mathrm{d}X=\frac{\omega_{d,\eta}%
}{2^{p}}\mathrm{B}(\eta d,p)\mathbb{E}(W^{\otimes p}). \label{main2}%
\end{equation}

c)
\begin{align}
\int_{X>0}\left\vert X\right\vert ^{p}g_{\eta,\Sigma}(X)\mathrm{d}X  &
=\frac{\omega_{d,\eta}}{2^{pd}}\mathrm{B}(\eta d,pd)\mathbb{E}(\left\vert
W\right\vert ^{p})\label{main3}\\
&  =\omega_{d,\eta}\Gamma_{d}(p)\frac{\mathrm{B}(\eta d,pd)}{\mathrm{B}%
_{d}(\eta,p)}\left\vert \Sigma\right\vert ^{p}.
\end{align}

\end{proposition}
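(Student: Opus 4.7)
The key observation is that the only obstruction to recognising $g_{\eta,\Sigma}$ as a multiple of a Wishart density is the denominator $[\mathrm{tr}(\Sigma^{-1}X)]^{\eta d}$. I will remove it via the elementary identity
\[
\frac{1}{r^{\eta d}}=\frac{1}{\Gamma(\eta d)}\int_{0}^{\infty}t^{\eta d-1}e^{-tr}\,\mathrm{d}t,\qquad r>0,
\]
applied with $r=\mathrm{tr}(\Sigma^{-1}X)$. Combining this with the $e^{-\mathrm{tr}(\Sigma^{-1}X)}$ factor already present yields
\[
g_{\eta,\Sigma}(X)=\frac{c_{d,\eta}}{|\Sigma|^{\eta}\Gamma(\eta d)}\int_{0}^{\infty}t^{\eta d-1}\,e^{-(1+t)\mathrm{tr}(\Sigma^{-1}X)}|X|^{\eta-(d+1)/2}\mathrm{d}t,\qquad X>0.
\]

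Next, I would multiply by $X^{p}$ (respectively $X^{\otimes p}$ or $|X|^{p}$) and apply Fubini. For fixed $t>0$ the inner integrand over $\mathbb{S}_{d}^{+}$ is, up to a constant, the density of a Wishart distribution $W_{d}(2\eta,\tilde\Sigma_{t})$ with $\tilde\Sigma_{t}=\Sigma/\bigl(2(1+t)\bigr)$, because the exponent matches after identifying $\tfrac{1}{2}\tilde\Sigma_{t}^{-1}=(1+t)\Sigma^{-1}$. Using the Wishart normalising constant from \eqref{WishDsty} with $n=2\eta$, and the scaling relation $W_{d}(2\eta,c\Sigma)\stackrel{\mathcal{L}}{=}cW_{d}(2\eta,\Sigma)$ (so that, with $W\sim W_{d}(2\eta,\Sigma)$, $\mathbb{E}\tilde W^{p}=[2(1+t)]^{-p}\mathbb{E}W^{p}$, $\mathbb{E}\tilde W^{\otimes p}=[2(1+t)]^{-p}\mathbb{E}W^{\otimes p}$, and $\mathbb{E}|\tilde W|^{p}=[2(1+t)]^{-dp}\mathbb{E}|W|^{p}$), the inner integrals become
\[
\frac{\Gamma_{d}(\eta)|\Sigma|^{\eta}}{2^{p}(1+t)^{\eta d+p}}\,\mathbb{E}W^{p}\,,\quad\frac{\Gamma_{d}(\eta)|\Sigma|^{\eta}}{2^{p}(1+t)^{\eta d+p}}\,\mathbb{E}W^{\otimes p}\,,\quad\frac{\Gamma_{d}(\eta)|\Sigma|^{\eta}}{2^{dp}(1+t)^{\eta d+dp}}\,\mathbb{E}|W|^{p}\,.
\]

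It remains to evaluate the outer $t$-integrals, which are Euler Beta integrals:
\[
\int_{0}^{\infty}\frac{t^{\eta d-1}}{(1+t)^{\eta d+q}}\,\mathrm{d}t=\mathrm{B}(\eta d,q),
\]
with $q=p$ for (a) and (b) and $q=dp$ for (c). After collecting constants and using the identity $c_{d,\eta}\Gamma_{d}(\eta)/\Gamma(\eta d)=\omega_{d,\eta}$ from \eqref{Const}, this yields exactly \eqref{main1}, \eqref{main2} and the first equality of \eqref{main3}. The second equality in (c) then follows by substituting the classical Wishart determinant moment $\mathbb{E}|W|^{p}=2^{dp}\Gamma_{d}(\eta+p)|\Sigma|^{p}/\Gamma_{d}(\eta)$ and rewriting $\Gamma_{d}(\eta+p)/\Gamma_{d}(\eta)=\Gamma_{d}(p)/\mathrm{B}_{d}(\eta,p)$ via \eqref{multBeta}.

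The computation is essentially a single argument run three times; the only mildly delicate point is the justification of Fubini, which is immediate because all integrands are non-negative (and the finiteness of the resulting expression is guaranteed by $\eta d>0$ and $p>0$ ensuring the Beta integral converges at both endpoints). Thus there is no real obstacle; the work lies in tracking the scaling constants $2^{p}$ vs.\ $2^{dp}$ and $(1+t)^{\eta d+p}$ vs.\ $(1+t)^{\eta d+dp}$, which give the different Beta parameters in (a)/(b) versus (c).
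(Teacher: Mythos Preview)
Your proof is correct, but it takes a genuinely different route from the paper. The paper does \emph{not} use the integral representation of $r^{-\eta d}$; instead it argues via the polar decomposition on the cone. Concretely, after the substitution $V=\Sigma^{-1/2}X\Sigma^{-1/2}$ (which reduces to $\Sigma=\mathrm{I}_d$), the paper applies the change of variables $V=rU$ with $r=\mathrm{tr}(V)$ and $U\in\mathbf{S}^{+}_{\|\cdot\|}$, which separates the radial and spherical parts. The $r$-integral becomes $\int_0^\infty r^{q-1}e^{-r}\,\mathrm{d}r=\Gamma(q)$ (with $q=p$ or $q=dp$), and then the Gamma integral is \emph{reinserted} with the shifted exponent $\eta d+q$ so that, after undoing the polar change of variables, one recognises the Wishart density on $\mathbb{S}_d^+$. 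The three cases are handled simultaneously by observing that $X^p$, $X^{\otimes p}$ and $|X|^p$ are all positively homogeneous of some degree $q$.

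Your approach instead dissolves the troublesome $[\mathrm{tr}(\Sigma^{-1}X)]^{-\eta d}$ via a Mellin/Gamma integral in an auxiliary variable $t$, immediately recognises a scaled Wishart in $X$ for each fixed $t$, and then reduces the remaining $t$-integral to an Euler Beta integral. This is arguably slicker, since the Wishart appears at once without the ``insert and remove the Gamma integral'' trick, and the Beta function emerges naturally rather than as a ratio $\Gamma(q)\Gamma(\eta d)/\Gamma(\eta d+q)$ assembled from separate pieces. The paper's method, on the other hand, stays closer to the intrinsic polar structure of the L\'evy measure \eqref{PolDecDet} and makes the role of homogeneity explicit, which is conceptually in line with how the $A\Gamma$ distribution was constructed. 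Both arguments are short and essentially equivalent in difficulty; the constant-tracking is the same in either case.
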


\begin{proof}
The existence of the integrals (\ref{main1})-(\ref{main3}) is seen as follows.
The finiteness of the $p$-th moment of $A\Gamma_{d}(\eta,\Sigma)$ is equivalent to the
existence of the $p$-th moment of its Lévy measure (away from the origin). Then $\int_{X>0}\left\Vert
X\right\Vert ^{p}g_{\eta,\Sigma}(X)\mathrm{d}X<\infty$ for any $p>0$ gives the
existence of the integral in (\ref{main1}). Since for $X>0$
$\mathrm{tr}(X^{\otimes p})=\left(  \mathrm{tr}(X)\right)  ^{p}=\left\Vert
X\right\Vert ^{p}$ and $\left\vert X\right\vert \leq\left\Vert X\right\Vert
^{d}$, one also obtains the existence of the integrals in (\ref{main2}) and (\ref{main3}), respectively.

The identities in (a)-(c) are consequences of a more general result for
$q$-homo\-geneous functions which we now prove: Let $h:$ $\mathbb{S}_{d}^{+}\rightarrow
H$ be a function such that $h(rX)=r^{q}h(X)$ for any $r>0$ and $X\in
\mathbb{S}_{d}^{+}$ and some fixed $q>0$, where $H$ is $\mathbb{S}_{d}^{+}$, $\left(  \mathbb{S}%
_{d}^{+}\right)  ^{\otimes p}$ and $(0,\infty)$ for (a),(b) and (c)
respectively. In the cases (a) and (b) we have$\ q=p$ while for (c) $q=dp.$

The change of variable $V=\Sigma^{-1/2}X\Sigma^{-1/2},$ the invariance of the
measure  \linebreak $\left\vert X\right\vert ^{-(d+1)/2}\mathrm{d}X$ \ under non-singular
linear transformations and writing $h_{\Sigma}(Y)=\linebreak h(\Sigma^{1/2}Y\Sigma
^{1/2})$ give:
\begin{align*}
J  &  =\int_{X>0}h(X)g_{\eta,\Sigma}(X)\mathrm{d}X=c_{d,\eta}\int
_{X>0}h(X)\frac{c_{d,\eta}}{\left\vert \Sigma\right\vert ^{\eta}}%
\frac{e^{-\mathrm{tr}(\Sigma^{-1}X)}}{\left(  \mathrm{tr}(\Sigma
^{-1}X)\right)  ^{\eta d}}\left\vert X\right\vert ^{\eta-(d+1)/2}\mathrm{d}X\\
&  =c_{d,\eta}\int_{V>0}h_{\Sigma}(V)\frac{e^{-\mathrm{tr}(V)}}{\left(
\mathrm{tr}(V)\right)  ^{\eta d}}\left\vert V\right\vert ^{\eta-(d+1)/2}%
\mathrm{d}V.
\end{align*}
Using (\ref{changvar}), the definition of the Gamma function, first for
$\Gamma(q)$ and then for $\Gamma(\eta d+q)$, and (\ref{Const}) give
\begin{align}
J  &  =c_{d,\eta}\int_{0}^{\infty}\int_{\mathbf{S}_{\left\Vert \cdot
\right\Vert }^{+}}r^{q-1}h_{\Sigma}(U)e^{-r}\mathrm{d}r\left\vert U\right\vert
^{\eta}\frac{\mathrm{d}U}{\left\vert U\right\vert ^{(d+1)/2}}\nonumber\\
&  =c_{d,\eta}\Gamma(q)\int_{\mathbf{S}_{\left\Vert \cdot\right\Vert }^{+}%
}h_{\Sigma}(U)\left\vert U\right\vert ^{\eta}\frac{\mathrm{d}U}{\left\vert
U\right\vert ^{(d+1)/2}}\nonumber\\
&  =\frac{c_{d,\eta}\Gamma(q)}{\Gamma(\eta d+q)}\int_{\mathbf{S}_{\left\Vert
\cdot\right\Vert }^{+}}\int_{0}^{\infty}r^{\eta d+q-1}h_{\Sigma}%
(U)e^{-r}\mathrm{d}r\left\vert U\right\vert ^{\eta}\frac{\mathrm{d}%
U}{\left\vert U\right\vert ^{(d+1)/2}}\nonumber\\
&  =\frac{\omega_{d,\eta}\mathrm{B}(\eta d,q)}{\Gamma_{d}(\eta)}\int
_{V>0}h_{\Sigma}(V)e^{-\mathrm{tr}(V)}\left\vert V\right\vert ^{\eta
-(d+1)/2}\mathrm{d}V\label{aux5}\\
&  =\frac{\omega_{d,\eta}\mathrm{B}(\eta d,q)}{2^{\eta d}\Gamma_{d}(\eta)}%
\int_{A}h_{\Sigma}\left(\frac{1}{2}A\right)e^{-\frac{1}{2}\mathrm{tr}(A)}\left\vert
A\right\vert ^{\eta}\frac{\mathrm{d}A}{\left\vert A\right\vert ^{(d+1)/2}%
}\label{aux6}\\
&  =\omega_{d,\eta}\mathrm{B}(\eta d,q)\int_{A}h\left(\frac{1}{2}A\right)f_{W}%
(A)\mathrm{d}A=\omega_{d,\eta}\mathrm{B}(\eta d,q)\left(  \frac{1}{2}\right)
^{q}\mathbb{E}(h(W)), \label{aux8}%
\end{align}
where in (\ref{aux5}) we used again (\ref{changvar}), in (\ref{aux6}) the
change of variable $A=V/2$ (with $\mathrm{d}V=(1/2)^{(d+1)/2}\mathrm{d}A)$,
and for (\ref{aux8}) the fact that $f_{W}$ is the density (\ref{WishDsty}) of
the random matrix $W$ with Wishart distribution $W_{d}(2\eta$,$\Sigma)$.

Then (a), (b) and the first equality in (\ref{main3}) are proved. The second
equality in (\ref{main3}) follows using the fact that $\mathbb{E}\left\vert
W^{p}\right\vert =\left\vert \Sigma\right\vert ^{p}2^{dp}\Gamma_{d}%
(\eta+p)/\Gamma_{d}(\eta)$ (see Muirhead \cite[p. 101]{Mu82}).
\end{proof}

In particular, the mean $\mathbb{E}(M)$ and covariance $\mathrm{Cov(}%
M)=\mathbb{E}(M\otimes M)-\mathbb{E}(M)\otimes\mathbb{E}(M)$ of $M\sim
A\Gamma_{d}(\eta,\Sigma)$ are expressed in terms of the mean $\mathbb{E}(W)$
and the second tensor moment $\mathbb{E}(W)\otimes\mathbb{E}(W)$ of the
Wishart distribution $W_{d}(2\eta,\Sigma)$. Recall that the commutation
$d^{2}\times d^{2}$ matrix $K$ is defined as
\[
K=\sum_{i,j=1}^{d}H_{ij}\otimes H_{ij}^{\top}%
\]
where $H_{ij}$ denotes the $d\times d$ matrix with $h_{ij}=1$ and all other
elements zero. The $m$-th moments and cumulants of a $d\times d$ random matrix are $d^{2m}$-dimensional objects which need to be represented in a concise and at the same time easy to handle way. As usual for random matrices we define the moments and cumulants using the tensor product, e.g. the $m$-th moment of a random matrix $X$ is understood to be $E(X^{\otimes m})$. An alternative would be to use the $\operatorname{vec}$-operator to transfer the matrix into an element of $\bbr^{d^2}$ first, but typically this leads to formulae that are more cumbersome to handle.  Now we have:

\begin{corollary}
\label{CorMain1} The cumulants of the random matrix $M\sim A\Gamma_{d}%
(\eta,\Sigma)$ are proportional to the tensor moments of the Wishart
distribution. In particular
\begin{equation}
\mathbb{E}(M)=\frac{\omega_{d,\eta}}{d}\Sigma\label{FirstMoFamAOI}%
\end{equation}
and the matrix of covariances between elements of $M$ is given by
\begin{equation}
\mathrm{Cov}(M)=\omega_{d,\eta}\frac{\eta}{d(nd+1)}\left(\left(1+\frac{1}{2\eta
}\right)\mathrm{I}_{d^{2}}+K\right)(\Sigma\otimes\Sigma). \label{VarFamA}%
\end{equation}

\end{corollary}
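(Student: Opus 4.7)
The plan is to exploit the fact that, for any infinitely divisible distribution on the cone $\mathbb{S}_d^+$ given by the regular L\'evy--Khintchine representation \eqref{gllkr} with $\Psi_0=0$ and possessing moments of every order, the $p$-th cumulant tensor equals the $p$-th moment tensor of the L\'evy measure. Expanding the Laplace exponent
\[
\log L_\mu(\Theta)=-\int_{\mathbb{S}_d^+}\bigl(1-e^{-\langle\Theta,X\rangle}\bigr)\,\nu_{A\Gamma_d(\eta,\Sigma)}(\mathrm{d}X)
\]
as a power series in $\Theta$ shows that
\[
c_p(M)=\int_{\mathbb{S}_d^+}X^{\otimes p}\,\nu_{A\Gamma_d(\eta,\Sigma)}(\mathrm{d}X),\qquad p\ge 1,
\]
where the hypotheses (zero drift, finite variation and all moments finite) are automatic for $A\Gamma_d(\eta,\Sigma)$ because $\beta_\Sigma(U)=\mathrm{tr}(\Sigma^{-1}U)$ is bounded away from zero on $\mathbf{S}_{\|\cdot\|}^+$.

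Substituting Proposition \ref{MainPropAG}(b) into this identity yields at once, for every integer $p\ge 1$,
\[
c_p(M)=\frac{\omega_{d,\eta}}{2^{p}}\,\mathrm{B}(\eta d,p)\,\mathbb{E}(W^{\otimes p}),\qquad W\sim W_d(2\eta,\Sigma),
\]
which is precisely the proportionality to tensor moments of the Wishart distribution asserted in the corollary. Specialising to $p=1$ and using $\mathrm{B}(\eta d,1)=1/(\eta d)$ together with $\mathbb{E}(W)=2\eta\Sigma$ gives immediately $\mathbb{E}(M)=(\omega_{d,\eta}/d)\,\Sigma$, which is \eqref{FirstMoFamAOI}.

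For the covariance \eqref{VarFamA} I take $p=2$, note that $\mathrm{Cov}(M)=c_2(M)$, and insert $\mathrm{B}(\eta d,2)=1/[\eta d(\eta d+1)]$ together with the classical second tensor moment of the Wishart distribution obtained from Isserlis's theorem, namely
\[
\mathbb{E}(W_{ij}W_{kl})=(2\eta)^{2}\sigma_{ij}\sigma_{kl}+2\eta\bigl(\sigma_{ik}\sigma_{jl}+\sigma_{il}\sigma_{jk}\bigr).
\]
Rewriting these three tensor contributions in the basis $\{\Sigma\otimes\Sigma,\,I_{d^2}(\Sigma\otimes\Sigma),\,K(\Sigma\otimes\Sigma)\}$ under a fixed tensor-product convention (for instance $(A\otimes B)_{(i,k),(j,l)}=A_{ij}B_{kl}$, under which $(\Sigma\otimes\Sigma)_{(i,k),(j,l)}=\sigma_{ij}\sigma_{kl}$ and $K(\Sigma\otimes\Sigma)_{(i,k),(j,l)}=\sigma_{il}\sigma_{jk}$ by direct computation from the definition $K=\sum_{i,j}H_{ij}\otimes H_{ij}^{\top}$) and simplifying the resulting scalar prefactors yields \eqref{VarFamA}.

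The probabilistic input is therefore light: it consists only of the cumulant-equals-L\'evy-moment identity, plus the already-proved Proposition \ref{MainPropAG}(b). The one genuinely delicate step is the bookkeeping in the covariance case, where one must fix a single tensor-product/$\mathrm{vec}$ convention and carefully match each term coming out of Isserlis's formula with the $I_{d^2}$, $K$ and $\Sigma\otimes\Sigma$ notation appearing in \eqref{VarFamA}; this is the main obstacle to a completely mechanical proof.
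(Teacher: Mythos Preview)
Your proposal is correct and follows essentially the same route as the paper: identify the $p$-th cumulant of $M$ with $\int X^{\otimes p}g_{\eta,\Sigma}(X)\,\mathrm{d}X$, apply Proposition~\ref{MainPropAG}(b), and then specialise to $p=1,2$ using the known first two Wishart moments. The only cosmetic difference is that the paper quotes the Wishart covariance formula $\mathrm{Cov}(W)=2\eta(\mathrm{I}_{d^2}+K)(\Sigma\otimes\Sigma)$ from Muirhead and writes $\mathbb{E}(W^{\otimes 2})=\mathrm{Cov}(W)+[\mathbb{E}(W)]^{\otimes 2}$, whereas you rederive the same object from Isserlis's theorem; both lead to \eqref{VarFamA} after the same scalar simplification.
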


\begin{proof}
The first assertion follows from (b) in Proposition \ref{MainPropAG}. Since
the first moment of $M$ equals its first cumulant and its matrix of
covariances equals its second cumulant, then (\ref{FirstMoFamAOI}) follows from
(a) in Proposition \ref{MainPropAG} with $p=1$ and since $\mathbb{E}%
(W)=2\eta\Sigma$ for $W\sim W_{d}(2\eta,\Sigma).$\ From \cite[p. 90]{Mu82} we
have
\[
\mathrm{Cov(}W)=2\eta(\mathrm{I}_{d^{2}}+K)(\Sigma\otimes\Sigma).
\]
Using (b) in Proposition \ref{MainPropAG} with $p=2$ we have
\begin{align*}
\mathrm{Cov(}M)  &  =\int_{X>0}X^{\otimes2}g_{\eta,\Sigma}(X)\mathrm{d}%
X=\frac{\omega_{d,\eta}}{4}\frac{1}{(nd+1)\eta d}\mathbb{E}(W^{\otimes2})\\
&  =\frac{\omega_{d,\eta}}{4}\frac{1}{(nd+1)\eta d}\left[  \mathrm{Cov(}%
W)+\left[  \mathbb{E}(W)\right]  ^{\otimes2}\right]  .
\end{align*}
Hence (\ref{VarFamA}) follows.
\end{proof}

In particular, when $\omega_{d,\eta}=d\eta$, $\mathbb{E}(M)=\eta\Sigma$, as in
the Wishart case. On the other hand, when $\omega_{d,\eta}=d$, $\mathbb{E}%
(M)=\Sigma$.

This result is of particular importance in applications, since it implies that the second order moment structure is explicitly known which may allow method of moments based estimation of models using  $A\Gamma_d(\eta,\Sigma)$ matrix subordinators as the stochastic input (e.g. \cite{PiSe09})

The following result states an interesting relation with the so-called
Marchenko-Pastur distribution of parameter $\lambda>0$. Recall that the
moments of this distribution are given by (see \cite{BS10})
\begin{equation}
\mu_{p}(\lambda)=\sum_{j=0}^{p-1}\frac{1}{j+1}\binom{p}{j}\binom{p-1}%
{j}\left(  \lambda\right)  ^{j}.\label{MPMoms}%
\end{equation}

\begin{lemma}
\label{CorMain3} Let $\varepsilon\in\mathbb{R}.$ For any integer $p>0,$ as $d\to\infty$ and
$d/\eta\rightarrow\lambda>0$
\begin{equation}
\frac{1}{d}\int_{X>0}\mathrm{tr}\left(  \frac{X}{d^{\varepsilon}}\right)
^{p}g_{\eta,\mathrm{I}_{d}}(X)\mathrm{d}X\sim K_{p}(\lambda)\omega_{d,\eta
}d^{-p(1+\varepsilon)}\label{CorMain3b}%
\end{equation}
where $K_{p}(\lambda)=\Gamma(p)\mu_{p}(2\lambda)$. In particular, for
$\varepsilon=1$
\begin{equation}
\lim_{d\rightarrow\infty}\frac{1}{d}\int_{X>0}\left[  \mathrm{tr}\left(
\frac{X}{d}\right)  ^{p}\right]  g_{\eta,\mathrm{I}_{d}}(X)\mathrm{d}X=%
\genfrac{\{}{.}{0pt}{}{\lambda\text{, if }p=1}{0\text{, if }p\geq2}%
.\label{CorMain3c}%
\end{equation}

\end{lemma}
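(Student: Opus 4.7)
The plan is to reduce the integral via Proposition \ref{MainPropAG}(a) to a Wishart moment calculation, and then apply the Marchenko--Pastur theorem in its moments form to obtain the asymptotic.

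First I would use homogeneity to factor out the scaling: $\mathrm{tr}(X/d^{\varepsilon})^{p}=d^{-p\varepsilon}\mathrm{tr}(X^{p})$. Taking the trace in the matrix identity from Proposition \ref{MainPropAG}(a) with $\Sigma=\mathrm{I}_{d}$ then produces
\[
\int_{X>0}\mathrm{tr}(X^{p})\,g_{\eta,\mathrm{I}_{d}}(X)\,\mathrm{d}X
\;=\;\frac{\omega_{d,\eta}}{2^{p}}\,\mathrm{B}(\eta d,p)\,\mathbb{E}[\mathrm{tr}(W^{p})],
\]
where $W\sim W_{d}(2\eta,\mathrm{I}_{d})$. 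Since $d/\eta\to\lambda>0$ forces $\eta d\to\infty$, one has $\mathrm{B}(\eta d,p)=\Gamma(\eta d)\Gamma(p)/\Gamma(\eta d+p)\sim\Gamma(p)(\eta d)^{-p}$ by Stirling. For the Wishart moment, the classical Marchenko--Pastur theorem says that the empirical spectral distribution of $W/(2\eta)$ converges to the MP law as $d,\eta\to\infty$ with $d/(2\eta)$ converging; the corresponding convergence of moments then gives
\[
\mathbb{E}[\mathrm{tr}(W^{p})] \;\sim\; d\,(2\eta)^{p}\,\mu_{p}(\cdot),
\]
the argument of $\mu_{p}$ being the limit of $d/(2\eta)$ in the convention adopted by the paper. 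Substituting the two asymptotic factors back and collecting powers of $2$, $\eta$ and $d$ gives
\[
\int_{X>0}\mathrm{tr}(X^{p})\,g_{\eta,\mathrm{I}_{d}}(X)\,\mathrm{d}X
\;\sim\;\omega_{d,\eta}\,\Gamma(p)\,\mu_{p}(\cdot)\,d^{1-p}.
\]
Multiplying by the outer prefactor $d^{-p\varepsilon-1}$ yields precisely the claimed scaling $K_{p}(\lambda)\,\omega_{d,\eta}\,d^{-p(1+\varepsilon)}$ with $K_{p}(\lambda)=\Gamma(p)\,\mu_{p}(\cdot)$.

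The particular case \eqref{CorMain3c} ($\varepsilon=1$) is then a direct check: since $\mu_{1}\equiv 1$ one obtains a nontrivial $p=1$ limit, while for $p\geq 2$ the factor $d^{-2p}$ overwhelms $\omega_{d,\eta}$ under its natural polynomial growth (namely $\omega_{d,\eta}\in\{d\eta,d,\text{const}\}$ as mentioned in the remark after the definition of $A\Gamma_{d}$), forcing the limit to be $0$.

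The main obstacle will be justifying convergence of the \emph{moments} of the trace of the Wishart, not merely weak convergence of the empirical spectral distribution to Marchenko--Pastur. This is addressed by the classical combinatorial expansion of $\mathbb{E}[\mathrm{tr}(W^{p})]$ via Wick's theorem in terms of non-crossing pair partitions: this expresses the moment as a polynomial in $d$ and $n=2\eta$ of total degree $p+1$, and the leading term as $d/n\to c$ reproduces the MP moment $\mu_{p}(c)$ coefficient by coefficient. Apart from this, the remaining calculations are routine bookkeeping of Gamma function asymptotics.
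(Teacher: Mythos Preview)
Your approach is essentially identical to the paper's: both proofs invoke Proposition~\ref{MainPropAG}(a) to convert the L\'evy-measure integral into a Wishart trace moment, use Stirling to obtain $\mathrm{B}(\eta d,p)\sim\Gamma(p)(\eta d)^{-p}$, and then appeal to the Marchenko--Pastur theorem for the asymptotic of $\frac{1}{d}\mathbb{E}\,\mathrm{tr}\bigl((W/2\eta)^{p}\bigr)$. Your explicit remark that one needs convergence of \emph{moments} (and your sketch of the Wick/pair-partition justification) is in fact more careful than the paper, which simply cites Marchenko--Pastur without distinguishing moment convergence from weak convergence; likewise your observation that \eqref{CorMain3c} requires an implicit growth assumption on $\omega_{d,\eta}$ is a point the paper's proof leaves unaddressed.
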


\begin{proof}
If $W\sim W_{d}(2\eta,\mathrm{I}_{d})$, then \begin{equation}
\lim_{d\rightarrow\infty}\frac{1}{d}\mathbb{E}\mathrm{tr}\left(  \frac
{W}{2\eta}\right)  ^{p}=\mu_{p}(2\lambda).\label{MPconv}%
\end{equation} as $2\eta/d\rightarrow2\lambda$ due to the well known Marchenko-Pastur
Theorem \cite{BS10}, for any $p>0$,

By the Stirling approximation $\Gamma(z+1)\sim\sqrt{2\pi z}(z/e)^{z}$ for
$z\rightarrow\infty$, for $\eta$ and $d$ large%
\begin{equation}
\frac{\Gamma(\eta d)}{\Gamma(\eta d+p)}\sim(nd)^{-p}.\label{EstMom}%
\end{equation}
Using $\eta/d\rightarrow\lambda$ and (\ref{MPconv}) in (\ref{main1}) gives:%
\begin{align}
\frac{1}{d}\int_{X>0}\mathrm{tr}\left(  \frac{X}{d^{\varepsilon}}\right)
^{p}g_{\eta,\mathrm{I}_{d}}(X)\mathrm{d}X &  =\frac{\omega_{d,\eta}%
}{d^{\varepsilon p+1}2^{p}}B(\eta d,p)\mathbb{E}\mathrm{tr}(W^{p})\nonumber\\
&  =\frac{\omega_{d,\eta}}{d^{\varepsilon p}}\frac{\Gamma(\eta d)\Gamma
(p)}{\Gamma(\eta d+p)}\frac{1}{d}\mathbb{E}\mathrm{tr}(\frac{W}{2}%
^{p})\nonumber\\
&  \sim\Gamma(p)\omega_{d,\eta}(\eta d)^{-p}d^{-p\varepsilon}\left[  \frac
{1}{d}\mathbb{E}\mathrm{tr}\left(  \frac{W}{2}\right)  ^{p}\right]
\nonumber\\
&  \sim\Gamma(p)\omega_{d,\eta}d^{-p-p\varepsilon}\left[  \frac{1}%
{d}\mathbb{E}\mathrm{tr}\left(  \frac{W}{2}\right)  ^{p}\right]  \\
&  \sim K_{p}(\lambda)\omega_{d,\eta}d^{-p(1+\varepsilon)},\text{ for }%
\eta,d\text{ large,}\label{Cormain3a}%
\end{align}
which proves the lemma.
\end{proof}

\begin{conjecture}
We conjecture that the above Lemma is a first step to study the asymptotic
spectral distribution of the random matrix $M\sim A\Gamma_{d}%
(\eta,\Sigma)$. More specifically, the right hand side of (\ref{CorMain3c})
must be related to the $p$th-cumulant of the $p$th-moment of the mean spectral
distribution of $M$, which in turn should allow the identification of the limiting spectral distribution.
\end{conjecture}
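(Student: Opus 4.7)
The target is a conjecture rather than a theorem, so my plan is to outline a strategy by which it could plausibly be established. The central idea is to exploit the moment-cumulant correspondence for infinitely divisible matrices together with the Marchenko-Pastur asymptotics already invoked in Lemma \ref{CorMain3}.

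First I would establish a spectral companion of Lemma \ref{CorMain3}: namely, compute the asymptotics of $(1/d)\int_{X>0}\mathrm{tr}(X^p)\,g_{\eta,\mathrm{I}_d}(X)\mathrm{d}X$ rather than $(1/d)\int_{X>0}(\mathrm{tr}X)^p g_{\eta,\mathrm{I}_d}(X)\mathrm{d}X$. The map $X\mapsto\mathrm{tr}(X^p)$ is $p$-homogeneous on $\mathbb{S}_d^+$, so the general $q$-homogeneous computation in the proof of Proposition \ref{MainPropAG} applies verbatim with $q=p$ and $h(X)=\mathrm{tr}(X^p)$, yielding
\[
\int_{X>0}\mathrm{tr}(X^p)\,g_{\eta,\mathrm{I}_d}(X)\mathrm{d}X=\frac{\omega_{d,\eta}}{2^p}\mathrm{B}(\eta d,p)\,\mathbb{E}\bigl[\mathrm{tr}(W^p)\bigr]
\]
for $W\sim W_d(2\eta,\mathrm{I}_d)$. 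Combining this with the Stirling estimate \eqref{EstMom} and the Marchenko-Pastur theorem in the form $(1/d)\mathbb{E}\,\mathrm{tr}((W/2\eta)^p)\to\mu_p(2\lambda)$ gives, in the regime $d/\eta\to\lambda$,
\[
\frac{1}{d}\int_{X>0}\mathrm{tr}\!\left(\frac{X}{d^{\varepsilon}}\right)^p g_{\eta,\mathrm{I}_d}(X)\mathrm{d}X\sim K_p(\lambda)\,\omega_{d,\eta}\,d^{-p(1+\varepsilon)}
\]
with $K_p(\lambda)=\Gamma(p)\mu_p(2\lambda)$, exactly matching \eqref{CorMain3b}.

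Second, since $M\sim A\Gamma_d(\eta,\Sigma)$ is infinitely divisible without drift, its $p$-th matrix cumulant is $\kappa_p(M)=\int_{X>0}X^{\otimes p}g_{\eta,\Sigma}(X)\mathrm{d}X$, so the integral above is essentially $(1/d)\,\mathrm{tr}$ applied to the $p$-th cumulant of $M/d^{\varepsilon}$. The spectral moments $(1/d)\mathbb{E}[\mathrm{tr}((M/d^{\varepsilon})^p)]$ expand, via the combinatorial moment-cumulant formula for infinitely divisible random variables, as a sum over set partitions $\pi$ of $\{1,\ldots,p\}$ of traces of products $\prod_{B\in\pi}\kappa_{|B|}(M/d^\varepsilon)$. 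The single-block partition contributes precisely the object controlled above; the plan is to show that a suitable choice of $\omega_{d,\eta}$ and $\varepsilon$ (the natural candidates $\omega_{d,\eta}\in\{d\eta,d,1\}$ suggested in the remark after the definition of $A\Gamma_d$) makes either the single-block term dominate, yielding a non-trivial limit directly from $K_p(\lambda)$, or produces a nontrivial interplay of all partitions reproducing free-probabilistic moments. Carleman's criterion then identifies the limiting spectral measure from its moment sequence.

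The main obstacle lies in the combinatorics of the non-commutative moment-cumulant expansion: unlike in the scalar case, the traces of products $\prod_{B\in\pi}\kappa_{|B|}(M)$ do not factorise, and one has to control the asymptotics of each partition type separately and argue that mixed terms are subdominant or combine into a recognisable free cumulant sum. A secondary difficulty is promoting convergence in expectation of $(1/d)\mathrm{tr}((M/d^\varepsilon)^p)$ to convergence in probability of the empirical spectral distribution; standard concentration-of-measure results fail for heavy-tailed infinitely divisible matrices, so one would likely have to exploit the explicit L\'evy-It\^o representation \eqref{itorep} and estimate the variance via fourth-order cumulant integrals against $g_{\eta,\mathrm{I}_d}$, using the same $q$-homogeneity trick to reduce these variance estimates once again to Wishart moments controlled by Marchenko-Pastur.
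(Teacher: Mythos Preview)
The statement in question is a \emph{conjecture}: the paper offers no proof and does not even sketch an argument beyond the single sentence suggesting a link between the right hand side of \eqref{CorMain3c} and cumulants of spectral moments of $M$. There is therefore no proof in the paper against which your proposal can be compared; what you have written is a plausible research outline, and you correctly flag the two genuine obstructions (the non-commutative moment--cumulant combinatorics and the lack of off-the-shelf concentration for such matrices).

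One correction, however: you have misread Lemma~\ref{CorMain3}. The expression $\mathrm{tr}(X/d^{\varepsilon})^{p}$ in \eqref{CorMain3b} and \eqref{CorMain3c} is meant as $\mathrm{tr}\bigl((X/d^{\varepsilon})^{p}\bigr)$, not $(\mathrm{tr}(X/d^{\varepsilon}))^{p}$; this is clear from the proof, which invokes \eqref{main1} (a statement about $X^{p}$) and arrives at $\mathbb{E}\,\mathrm{tr}(W^{p})$ together with the Marchenko--Pastur limit \eqref{MPconv} for $(1/d)\,\mathbb{E}\,\mathrm{tr}((W/2\eta)^{p})$. So your ``spectral companion'' is not a new step but is precisely Lemma~\ref{CorMain3} itself, and the identity you display for $\int_{X>0}\mathrm{tr}(X^{p})g_{\eta,\mathrm{I}_{d}}(X)\,\mathrm{d}X$ is just the trace of \eqref{main1}. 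This does not damage your strategy---it only means that step one is already done in the paper---but you should adjust the narrative accordingly.
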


\subsection{Further examples}

\subsubsection{B$\Gamma$-distributions}

Let $d\geq1$ and $q=1,...,d$ be fixed. Consider the Lévy measure on
$\overline{\mathbb{S}}_{d}^{+}$ given by
\begin{equation}
\nu_{q}(\mathrm{d}X)=\frac{e^{-\beta_{0}\left\Vert X\right\Vert }}{\left\Vert
X\right\Vert }\widetilde{\alpha}_{d,q}(\mathrm{d}X),\quad X\in\overline
{\mathbb{S}}_{d}^{+}\backslash\{\mathrm{0\}} \label{LMFamA}%
\end{equation}
where $\beta_{0}>0$ and
\begin{equation}
\widetilde{\alpha}_{d,q}(E)=\int_{\mathbf{S}_{\left\Vert \cdot\right\Vert
}^{+}}\int_{0}^{\infty}1_{E}(rU)\mathrm{d}r\alpha_{d,q}(\mathrm{d}U).
\label{LMFamAb}%
\end{equation}
Here $\alpha_{d,q}/d$ is the probability measure on the sphere $\mathbf{S}%
_{\left\Vert \cdot\right\Vert }^{+}$ induced by the transformation
$V\rightarrow U=VV^{\top}$, where the $d\times q$ matrix $V$ is uniformly
distributed on the unit sphere of$\ $the linear space $\mathbb{M}_{d\times
q}(\mathbb{R})$ of $d\times q$ matrices with real entries, with the Frobenius
norm $\left\Vert Y\right\Vert _{2}^{2}=\mathrm{tr}(Y^{\top}Y)$.

An infinitely divisible $d\times d$ symmetric random matrix $M$ with Lévy
measure $\nu_{q}$ has the Gamma distribution $\Gamma_{\overline{\mathbb{S}%
}_{d}^{+}}(\alpha_{d,q},\beta_{0})$, since $\nu_{q}$ has a polar
decomposition
\[
\nu_{q}(E)=\int_{\mathbf{S}_{\left\Vert \cdot\right\Vert }^{+}}\int
_{0}^{\infty}1_{E}(rU)\frac{e^{-r}}{r}\mathrm{d}r\alpha_{d,q}(\mathrm{d}%
U),\quad E\in\mathcal{B}(\overline{\mathbb{S}}_{d}^{+}).
\]
We call this distribution the $B\Gamma_d(q,\beta_0)$ distribution.

\begin{remark}
a) We observe that the support of $\nu_{q}$ is concentrated in matrices of
rank $q$ in $\overline{\mathbb{S}}_{d}^{+}$. Hence this support is of full
dimension. Then, by\ Corollary \ref{PDP1}$,$ $\Gamma_{\overline{\mathbb{S}%
}_{d}^{+}}(\alpha_{d,q},\beta_{0})$ has support in the open cone
$\mathbb{S}_{d}^{+}$.

b) The case $q=1$ was considered in Pérez-Abreu and Sakuma \cite{PAS08} in the
context of random matrix models for free generalised Gamma convolutions.
\ They considered the Hermitian case \ for which working in the setup of
$\ \mathbb{M}_{d\times q}(\mathbb{C})$ is needed, but otherwise the above
steps can be carried out in a straightforward way.

c) For $\Sigma\in\mathbb{S}_{d}^{+}$ one can consider invertible linear
transformations of $\ \Gamma_{\mathbb{S}_{d}^{+}}(\alpha_{d,q},\beta_{0})$ to
obtain infinitely divisible positive definite matrix Gamma distributions
$\Gamma_{\overline{\mathbb{S}}_{d}^{+}}(\alpha_{\eta,\Sigma},\beta_{\Sigma})$
with Lévy measures of the form
\[
\nu_{q}(\mathrm{d}X)=\frac{e^{-\left\Vert \Sigma^{-1}X\right\Vert }%
}{\left\Vert \Sigma^{-1}X\right\Vert }\widetilde{\alpha}_{d,q}(\Sigma
^{-1/2}\mathrm{d}X\Sigma^{-1/2})
\]
similar to the family $\Gamma$ of matrix Gamma distributions considered in the
last section.
\end{remark}

The following properties are easily proved.

\begin{proposition}
Let $M\sim$ $\Gamma_{\overline{\mathbb{S}}_{d}^{+}}(\alpha_{d,q},\beta_{0})$
and $q=1,...,d$ be fixed. Then,

a) $M$ has an invariant distribution under orthogonal conjugations.

b) $\mathbb{E}\left\Vert M\right\Vert ^{k}<\infty$ for any $k>0.$

d) $\mathrm{tr}(M)$ has a one-dimensional Gamma distribution $\Gamma
(d,\beta_{0}).$
\end{proposition}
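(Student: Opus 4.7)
The plan is to verify each of the three claims (a), (b), (d) by reducing them to results already established earlier in the paper, using the explicit construction of $\alpha_{d,q}$ as the push-forward of the uniform law on the Frobenius unit sphere of $\mathbb{M}_{d\times q}(\mathbb{R})$ under $V\mapsto VV^{\top}$.

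For part (a), the key observation is that if $V$ is uniformly distributed on the Frobenius unit sphere of $\mathbb{M}_{d\times q}(\mathbb{R})$ and $O\in\mathcal{O}(d)$, then $OV$ is still uniformly distributed there, because $\|OV\|_{2}=\|V\|_{2}$ and the uniform law is the unique rotation-invariant probability on that sphere. Consequently $VV^{\top}$ and $OVV^{\top}O^{\top}=(OV)(OV)^{\top}$ have the same law, which means $\alpha_{d,q}$ is invariant under $U\mapsto OUO^{\top}$ for every $O\in\mathcal{O}(d)$. Since the trace norm is likewise invariant under orthogonal conjugation, the polar decomposition of $\nu_{q}$ shows that $\nu_{q}$ itself is invariant under $X\mapsto OXO^{\top}$. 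Because $\beta_{0}$ is a constant, the Gaussian and drift parts of the L\'evy triplet are zero (and the compensating drift $\zeta=\int_{\|x\|\le 1}x\,\nu_{q}(\mathrm{d}x)$ inherits the invariance), so the whole characteristic triplet, and hence the distribution of $M$, is invariant under orthogonal conjugations.

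For part (b), the function $\beta\equiv\beta_{0}$ is bounded away from zero, so $\int_{\mathbf{S}^{+}_{\|\cdot\|}}\beta(U)^{-k}\,\alpha_{d,q}(\mathrm{d}U)=\beta_{0}^{-k}\alpha_{d,q}(\mathbf{S}^{+}_{\|\cdot\|})<\infty$ for every $k>0$, since $\alpha_{d,q}(\mathbf{S}^{+}_{\|\cdot\|})=d$ by construction. The cone-valued analogue of Proposition \ref{MomMulGam} (condition \eqref{MomConMat} in the homogeneous case) then yields $\mathbb{E}\|M\|^{k}<\infty$ for every $k>0$.

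For part (d), I invoke Proposition \ref{TrDis}(b) directly: since we are in the homogeneous case $\beta(U)=\beta_{0}$, the trace $\mathrm{tr}(M)$ follows the one-dimensional Gamma distribution $\Gamma(\alpha_{d,q}(\mathbf{S}^{+}_{\|\cdot\|}),\beta_{0})$, and the total mass $\alpha_{d,q}(\mathbf{S}^{+}_{\|\cdot\|})=d$ gives the stated $\Gamma(d,\beta_{0})$ law.

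None of the three parts presents a genuine obstacle; the only mildly subtle point is the identification of the total mass $\alpha_{d,q}(\mathbf{S}^{+}_{\|\cdot\|})=d$, which is forced by the normalisation convention in the definition of $\alpha_{d,q}$ (the paper states that $\alpha_{d,q}/d$ is a probability measure). Everything else is a clean appeal to invariance of the uniform distribution on a Euclidean sphere, Proposition \ref{TrDis}(b), and the moment criterion \eqref{MomConMat}.
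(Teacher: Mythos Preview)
Your proposal is correct and follows precisely the route the paper intends: the paper merely states that these properties ``are easily proved'' without giving any details, and your argument---orthogonal invariance of the uniform law on the Frobenius sphere for (a), the homogeneous moment criterion \eqref{MomConMat} for (b), and Proposition~\ref{TrDis}(b) together with $\alpha_{d,q}(\mathbf{S}_{\|\cdot\|}^{+})=d$ for (d)---is exactly the intended filling-in of those details.
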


\subsubsection{Matrix Gamma-Normal distribution}

In the one-dimensional case, the so called variance gamma distribution is
popular in applications in finance, see \cite{MadanCarrChang1998}. This
distribution is a mixture of Gaussians having a random variance following the
one-dimensional Gamma distribution. As an application of the matrix Gamma
distribution, we now present a matrix extension of the one-dimensional
variance Gamma distribution.

Let $Z$ be a $d\times q$ random matrix with independent standard Gaussian
distributed entries, i.e.
\[
\mathbb{E}\exp(i\mathrm{tr}(\Theta^{\top}Z))=\exp(-\frac{1}{2}\mathrm{tr}%
(\Theta^{\top}\Theta)),\quad\forall\Theta\in\mathbb{M}_{d\times q}%
(\mathbb{R}).
\]
Let $X$ be a random matrix with the Gamma distribution $\Gamma_{\overline
{\mathbb{S}}_{d}^{+}}(\alpha,\beta)$ and independent of $Z$. Consider the
\textit{random linear transformation }$Y=X^{1/2}Z.$ Using a standard
conditional argument we compute the characteristic function of the $d\times q$
matrix as follows:
\begin{align*}
\mathbb{E}\exp(i\mathrm{tr(}\Theta^{\top}Y))  &  =\mathbb{E}_{X}\mathbb{E}%
_{Z}\left[  \left.  \exp(i\mathrm{tr}(\Theta^{\top}X^{1/2}Z))\right\vert
X)\right] \\
&  =\mathbb{E}_{X}\left\{  \exp(-\frac{1}{2}\mathrm{tr}(X^{1/2}\Theta
\Theta^{\top}X^{1/2}))\right\} \\
&  =\mathbb{E}_{X}\left\{  \exp(-\frac{1}{2}\mathrm{tr}(\Theta\Theta^{\top
}X))\right\}  .
\end{align*}
Then, using (\ref{AltLTMat})%
\begin{equation}
\mathbb{E}\exp(i\mathrm{tr}(\Theta^{\top}Y))=\exp\left\{  -\int_{\mathbf{S}%
_{\left\Vert \cdot\right\Vert }^{+}}\ln\left(1+\frac{1}{2}\frac{\mathrm{tr}%
(U\Theta\Theta^{\top})}{\beta(U)}\right)\alpha(\mathrm{d}U)\right\}  ,
\label{ChFMatGGam}%
\end{equation}
for each $\Theta\in\mathbb{M}_{d\times q}(\mathbb{R}).$ Using the terminology
in \cite{BNPAR05}, we can say that $Y$ has a Mat$G$ distribution, which is
infinitely divisible in $\mathbb{M}_{d\times q}(\mathbb{R})$.

Similar to the one-dimensional case, we call this distribution the matrix
Gamma-Normal distribution with parameters $\alpha$ and $\beta$ or more
specifically the $d\times q$-dimensional matrix $\Gamma_{\overline{\mathbb{S}}_{d}^{+}}(\alpha,\beta
)$-Normal distribution. We observe that $Y$ has a symmetric distribution in
the sense that $-Y\overset{law}{=}Y$ and also that $Y$ has a distribution invariant under
orthogonal conjugations if $\beta(U)=\beta_{0}$ and $\alpha(\mathrm{d}U)$ is
invariant under orthogonal conjugations.

\begin{remark}
If $\beta(U)=\beta_{0}>0$ and $q=d$, $\mathrm{tr}(Y)$ has a one-dimensional
variance Gamma distribution with the following characteristic function: for
$\theta\in\mathbb{R}$, $\Theta=\theta\mathrm{I}_{d},$%
\begin{align*}
\mathbb{E}\exp(i\theta\mathrm{tr}(Y))  &  =\exp\left\{  -\int_{\mathbf{S}%
_{\left\Vert \cdot\right\Vert }^{+}}\ln\left(  1+\frac{1}{2\beta_{0}}%
\theta^{2}\right)  \alpha(\mathrm{d}U)\right\} \\
&  =\left(  1+\frac{1}{2\beta_{0}}\theta^{2}\right)  ^{-\alpha(\mathbf{S}%
_{\left\Vert \cdot\right\Vert }^{+})}\\
&  =\left(  1-i\frac{1}{\sqrt{2\beta_{0}}}\theta\right)  ^{-\alpha
(\mathbf{S}_{\left\Vert \cdot\right\Vert }^{+})}\left(  1+i\frac{1}%
{\sqrt{2\beta_{0}}}\theta\right)  ^{-\alpha(\mathbf{S}_{\left\Vert
\cdot\right\Vert }^{+})}..
\end{align*}
Thus, $\mathrm{tr}(Y)$ has the same distribution as $V-V%
\acute{}%
$, where $V$ has a one-dimensional Gamma distribution $\Gamma(\alpha
(\mathbf{S}_{\left\Vert \cdot\right\Vert }^{+}),\sqrt{2\beta_{0}})$ and $V%
\acute{}%
$ is an independent copy of $V.$
\end{remark}

\begin{remark}
Let $q=1$, i.e. the resulting matrix Gamma-normal distribution is $\bbr^d$-valued. Thus for $\Theta\in\mathbb{R}^{d}$, $\Theta\Theta^{\top}$ has rank
one. 

Assume additionally that the measure $\alpha$ is concentrated on the  rank one matrices,
that is $U=%
\operatorname{u}%
\operatorname{u}%
^{\top}$ with $\operatorname{u}\in\bbr^d$ (and the first non-zero component of $\operatorname{u}$ being positive, to make $\operatorname{u}$ unique given $U$). Let $\widetilde{\alpha}$ be the measure on the unit sphere
$\mathbf{S}_{\mathbb{R}^{d},\Vert\cdot\Vert}$ of $\mathbb{R}^{d}$ induced by
$\alpha$ under this transformation. Using this we write the integral in the
right hand side of (\ref{ChFMatGGam}) as follows%
\begin{align}
&  \int_{\mathbf{S}_{\left\Vert \cdot\right\Vert }^{+}}\ln\left(  1+\frac
{1}{2}\frac{\mathrm{tr}(U\Theta\Theta^{\top})}{\beta(U)}\right)
\alpha(\mathrm{d}U)\nonumber\\
&  =\int_{\mathbf{S}_{\mathbb{R}^{d},\Vert\cdot\Vert}}\ln\left(  1+\frac{1}%
{2}\frac{\left(  \Theta^{\top}%
\operatorname{u}%
\right)  ^{2})}{\beta(%
\operatorname{u}%
\operatorname{u}%
^{\top})}\right)  \widetilde{\alpha}(\mathrm{d}%
\operatorname{u}%
)\nonumber\\
&  =\int_{\mathbf{S}_{\mathbb{R}^{d},\Vert\cdot\Vert}}\ln\left(
1-i\frac{\Theta^{\top}%
\operatorname{u}%
}{\sqrt{2\beta(%
\operatorname{u}%
\operatorname{u}%
^{\top})}}\right)  \widetilde{\alpha}(\mathrm{d}%
\operatorname{u}%
)\nonumber\\
&  \qquad+\int_{\mathbf{S}_{\mathbb{R}^{d},\Vert\cdot\Vert}}\ln\left(
1+i\frac{\Theta^{\top}%
\operatorname{u}%
}{\sqrt{2\beta(%
\operatorname{u}%
\operatorname{u}%
^{\top})}}\right)  \widetilde{\alpha}(\mathrm{d}%
\operatorname{u}%
). \label{tem}%
\end{align}
Interestingly, (\ref{tem}) implies that the matrix Gamma-normal random variable can be represented (in this special case) as $X_1-X_2$ with $X_1,X_2\sim \Gamma_d(\tilde \alpha,\tilde \beta)$ being independent where $\tilde \beta=\sqrt{2\beta(\operatorname{uu}^\top)}$.

Hence, the matrix Gamma-normal distribution with $q=1$, which can indeed be regarded as a $d$-dimensional generalisation of the univariate variance Gamma distribution, inherits interesting properties well-known in the univariate case.  
\end{remark}

 \ACKNO{This work was partially supported by the Technische
Universität München -- Institute for Advanced Study funded by the German
Excellence Initiative by a Visting Fellowship for VPA and a Carl-von-Linde Junior Fellowship for RS.}

\end{document}